\documentclass[a4paper]{amsart}



\usepackage{amsmath,amssymb,amscd}
\usepackage{extarrows}

\usepackage{tikz-cd}

\numberwithin{equation}{section}
\newtheorem{theorem}[equation]{Theorem}

\newtheorem{corollary}[equation]{Corollary}
\newtheorem{lemma}[equation]{Lemma}

\newtheorem{proposition}[equation]{Proposition}

\theoremstyle{remark}
\newtheorem{remark}[equation]{Remark}
\theoremstyle{definition}
\newtheorem{definition}[equation]{Definition}

\newtheorem{example}[equation]{Example}
\newtheorem{hypothesis}[equation]{Hypothesis}

\newcounter{proofsec}
\newcommand{\proofsec}{\noindent\textbf{\theproofsec.}~\stepcounter{proofsec}}

\renewcommand{\epsilon}{\varepsilon}

\newcommand{\Alg}{\ccat{Alg}}

\DeclareMathOperator{\Aut}{Aut}

\newcommand{\cat}{\mathcal}
\newcommand{\Cat}{\ccat{Cat}}

\newcommand{\ccat}{\mathrm}

\DeclareMathOperator*{\colim}{colim}
\DeclareMathOperator{\Conf}{Conf}

\newcommand{\widebar}{\overline}
\newcommand{\close}{\widebar}

\newcommand{\cover}{\mathcal}

\newcommand{\Dis}{\ccat{Disj}}

\newcommand{\Disk}{\ccat{Disk}}
\newcommand{\disj}{\sqcup}
\newcommand{\Disj}{\bigsqcup}
\renewcommand{\dot}{\bullet}

\newcommand{\Elu}{\mathbb{E}}

\DeclareMathOperator{\Emb}{Emb}

\renewcommand{\equiv}{\sim}

\newcommand{\equivwith}{\simeq}
\newcommand{\equivto}{\xrightarrow{\equiv}}
\newcommand{\extend}{\widebar}

\newcommand{\field}{\mathbb}

\newcommand{\R}{\field{R}}

\newcommand{\from}{\leftarrow}
\newcommand{\Fun}{\ccat{Fun}}

\newcommand{\id}{\mathrm{id}}

\newcommand{\intersect}{\cap}
\newcommand{\Intersect}{\bigcap}

\newcommand{\into}{\hookrightarrow}
\newcommand{\kara}{\varnothing}
\newcommand{\kore}{\textbf}

\DeclareMathOperator*{\laxcol}{lax\,colim}

\newcommand{\loc}{\mathrm{loc}}

\newcommand{\longequivto}{\xlongrightarrow{\equiv}}
\newcommand{\longfrom}{\longleftarrow}
\newcommand{\longto}{\longrightarrow}

\newcommand{\Man}{\ccat{Man}}
\newcommand{\man}{\ccat{man}}
\newcommand{\manc}{c\,\man}
\newcommand{\Map}{\mathrm{Map}}

\DeclareMathOperator{\Multimap}{Multimap}

\newcommand{\op}{\mathrm{op}}
\newcommand{\Open}{\ccat{Open}}

\newcommand{\parmo}{\extend}

\newcommand{\pr}{\mathrm{pr}}
\newcommand{\Pre}{\ccat{Pre}}

\newcommand*{\resto}[1]{\mathclose{|}_{#1}}

\newcommand{\Simp}{\mathbf{\Delta}}
\newcommand{\simp}{\Delta}

\newcommand{\sub}{\subset}
\DeclareMathOperator{\supp}{supp}

\newcommand{\tensor}{\otimes}
\newcommand{\Tensor}{\bigotimes}

\newcommand{\wreath}{\boldsymbol{\wr}}

\title{Descent properties of the topological chiral homology}
\author[Matsuoka, Takuo]{Takuo Matsuoka}
\address{Department of Mathematics, Northwestern University, Evanston,
  IL, USA (Not current)}
\email{motogeomtop@gmail.com}
\date{}

\subjclass[2010]{16E40
, 55N25
, 55N30
}
\keywords{factorization algebra, topological chiral homology,
  homotopical algebra}

\begin{document}
\setcounter{section}{-1}
\setcounter{equation}{-1}
\maketitle

\begin{abstract}
We study descent properties of Jacob Lurie's topological chiral
homology.
We prove that this homology theory satisfies descent for a factorizing
cover, as defined by Kevin Costello and Owen Gwilliam.
We also obtain a generalization of Lurie's approach to this homology
theory, which leads to a product formula for the infinity
$1$-category of factorization algebras, and its twisted generalization.
\end{abstract}

\section{Introduction}
\setcounter{subsection}{-1}
\setcounter{equation}{-1}
\subsection{Factorization algebra and the topological chiral homology}
\setcounter{subsubsection}{-1}

\subsubsection{}

Developing on the work of Lurie \cite[Chapter 5]{higher-alg} on the
\emph{topological chiral homology}, we study interesting counterparts
on manifolds of \emph{factorization algebras} defined by Beilinson and
Drinfeld on algebraic curves \cite{bd} (which were shown by them to be
equivalent to \emph{chiral algebras}, introduced in the same work).
Following some of the pioneers of the research of these objects on
manifolds, we call them \emph{factorization algebras}.

One motivation for studying factorization algebras on manifolds comes
from the central role which they play in quantum field theory,
generalizing the role of chiral algebras for conformal field theory.
Namely, observables of a quantum (or a classical) field theory form a
factorization algebra, and this is the structure in terms of which one
can rigorously understand quantization of a physical theory (in
perturbative sense) \cite{cg}, analogously to the deformation
quantization of the classical mechanics \cite{kontsevi}.

Factorization algebras are closely related to \emph{field theories} as
functors on a cobordism category, as introduced by Atiyah
\cite{atiyah} and Segal \cite{segal}.
We study \emph{locally constant} factorization algebras, which
correspond to \emph{topological} field theories.

A locally constant factorization algebra on the manifold $\R^n$ is
equivalent to what is known as an $E_n$-algebra, first introduced in
iterated loop space theory \cite{bv}.
$E_1$-algebra is an associative algebra, and an \emph{$E_n$-algebra}
can be inductively defined as an $E_{n-1}$-algebra with an additional
structure of an associative algebra commuting with the
$E_{n-1}$-structure.
A locally constant factorization algebra can be considered as a global
version of an $E_n$-algebra in a way analogous to how a chiral algebra
is a global version of a vertex operator algebra.
In particular, from any locally constant factorization algebra on an
$n$-dimensional manifold, one obtains an $E_n$-algebra around any
point by restricting the algebra to an open ball around the point.
This $E_n$-algebra is canonical up to a change of framing at the
point, and can be thought of as a local form of the
factorization algebra.

There is an issue that the notion of an $E_n$-algebra degenerates
(unless $n\le 1$) to that of a commutative algebra in a category whose
higher homotopical structure is degenerate.
Moreover, some further developments such as the theory of the Koszul
duality for factorization algebras \cite{poincare} requires a nice
higher homotopical structure in order to lead to fruitful results,
even on the manifold $\R^1$.
These issues force us to work in a homotopical setting.
In order to work in such a setting, we use the convenient language of
higher category theory.
(For the main body, note our conventions stated in Section
\ref{sec:terminology-notation}, which do not apply in this
introduction.)
We just remark here that associativity of an algebra in such a
setting means a data for homotopy coherent associativity (which in
particular is a structure rather than a property).

\subsubsection{}
In this work, we study from the point of view that a factorization
algebra is a generalization of a sheaf on a manifold (the term
``locally constant'' comes from this point of view).
It takes values in a symmetric monoidal infinity $1$-category.
A \kore{prealgebra} on a manifold $M$ is
a covariant functor $A$ on the poset of open subsets of $M$, for which
we have $A(U\disj V)\equivwith A(U)\tensor A(V)$ for disjoint open
subsets $U,\: V\sub M$, in a coherent way.
(Covariance is chosen for consistency of the terminology with the
intuition.)
$A$ is a \kore{factorization algebra} if it satisfies a suitable
gluing condition generalizing that for a sheaf.
Indeed, a locally constant cosheaf is a locally constant factorization
algebra with respect to the monoidal structure given by the coproduct.

The gluing condition of the factorization algebra of observables of a
physical theory reflects locality of the theory.
In Atiyah--Segal framework, the same property corresponds to
possibility of extending the functor on cobordisms to higher
codimensional manifolds.
A theory is \emph{fully extended} if it is extended to highest
codimensional manifolds, namely, to points.
The cobordism hypothesis of Baez--Dolan \cite{bdolan}, proved in a
much strengthened form by Hopkins--Lurie and Lurie \cite{tft}, states
that a fully extended topological field theory (on framed manifolds)
is completely determined by its value for a point.
Analogously, but in a simpler way, a factorization algebra which is
systematically defined on all (framed) manifolds, is determined by
the $E_n$-algebra which appear as its local form \cite{francis}.

A sheaf is defined by its sections.
One is often more interested in the \emph{derived} sections, or the
cohomology.
Since we work in a homotopical setting for factorization algebras, the
sections we consider for an algebra are \emph{always} the `derived'
ones.
Thus, study of factorization algebra can be considered as study of a
kind of homology theory.
This homology theory, for locally constant algebras, was defined by
Lurie \cite{higher-alg}, and was called \emph{topological chiral
  homology}.
Following Francis and Costello (who works with not necessarily locally
constant algebras), we also call it \emph{factorization homology}.

\subsubsection{}
Let us now give some ideas for our main results.

\subsection{Descent properties of factorization algebras}
\setcounter{subsubsection}{-1}
\label{sec:descent-factorization-intro}

In the following, we assume that the target category $\cat{A}$ of
prealgebras is a symmetric monoidal infinity $1$-category which is
closed under sifted homotopy colimits, and that the monoidal
multiplication functors preserve sifted homotopy colimits
variable-wise.

We have developed descent properties of locally constant
factorization algebras for covers, and for bases of topology.
Our first result (\textbf{Theorem \ref{thm:factorizing-seifert-vk}},
note the conventions stated in Section \ref{sec:terminology-notation})
proves (as a particular case, see Example \ref{ex:factorizing-cover})
that topological chiral homology satisfies descent for a
\emph{factorizing cover} in the sense of Costello--Gwilliam \cite{cg}.
Therefore, this connects the `\emph{\v{C}ech}' approach of
Costello--Gwilliam to factorization homology, to Lurie's approach,
which is analogous to the \emph{singular} approach to the local
coefficient (co)homology.
(Costello--Gwilliam in fact considered not necessarily locally
constant algebras.) 
This, combined with ideas of Francis, lead to a proof of a version of
Francis' theorem \cite{francis}.
This will be contained in the sequel \cite{poincare} of this paper.
This theorem can be considered as giving an \emph{Eilenberg--Steenrod}
approach to factorization homology, and one concludes
from these theorems that all three approaches are equivalent.

Moreover, we have generalized Lurie's approach to factorization
homology in the following way.
Namely, his definition of topological chiral homology uses the basis
$\Disk(M)$ for the topology of a manifold $M$, consisting of open
subdisks.
He also uses disjoint unions of disks, which give another basis
$\Dis(M)$ of $M$.
This latter basis has a nice property in the spirit of
Costello--Gwilliam, which we might call here \kore{factorizingness}.
Lurie's definition is stated in terms of the pair
$\Disk(M)\to\Dis(M)$.

In \textbf{Theorem \ref{thm:general-notion-of-algebra}}, we have given
a sufficient condition for a pair $\cat{E}_1\to\cat{E}$ of bases to
define the same notion of a locally constant factorization algebra,
when it replaces the pair $\Disk(M)\to\Dis(M)$ in Lurie's definition.
Even though the theorem is slightly technical, the sufficient
condition we have found is easy to check in practice.
For example, it is quite easy to check whether we can find a suitable
$\cat{E}_1$ if $\cat{E}$ is a factorizing basis of $M$, closed
under disjoint union in $M$, and consists
of open submanifolds homeomorphic to disjoint unions of disks.

Thus, this theorem is useful, and in particular leads to the
following, as well as applications to be discussed in the next
section.
Let us denote by $\Alg_M(\cat{A})$, the infinity $1$-category of locally
constant factorization algebras on a manifold $M$.

\begin{theorem}[Theorem \ref{thm:descent-of-categories-of-algebras}]
\label{thm:descent-of-categories-of-algebras-intro}
The association $M\mapsto\Alg_M(\cat{A})$ (which is contravariantly
functorial in open embeddings) is a sheaf of infinity $1$-categories.
\end{theorem}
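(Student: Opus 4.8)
The plan is to verify the sheaf condition directly, reducing it to the combinatorics of bases by means of Theorem~\ref{thm:general-notion-of-algebra}. Fix a manifold $M$ and an open cover $\mathcal{U}=\{U_i\}_{i\in I}$; for a nonempty finite word $\sigma=(i_0,\dots,i_k)$ write $U_\sigma=U_{i_0}\cap\cdots\cap U_{i_k}$. Since the Grothendieck topology in play is generated by open covers, it suffices to show that the restriction functors induce an equivalence
\[
\Alg_M(\cat{A})\;\longequivto\;\holim_{\sigma}\Alg_{U_\sigma}(\cat{A}),
\]
the limit being taken over the opposite of the \v{C}ech nerve of $\mathcal{U}$, i.e.\ over all nonempty finite words $\sigma$; the contravariant functoriality in open embeddings is already built into the construction of $\Alg_{(-)}(\cat{A})$.

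First I would pick a basis adapted to the cover. Let $\cat{E}=\cat{E}(M)$ be the collection of open subsets of $M$ homeomorphic to a disjoint union of disks and each of whose connected components is contained in some member of $\mathcal{U}$, and let $\cat{E}_1\sub\cat{E}$ be the connected ones. Then $\cat{E}$ is a factorizing basis of $M$, closed under disjoint union in $M$, and consisting of open submanifolds homeomorphic to disjoint unions of disks: factorizingness holds because any finite set of points of $M$ can be surrounded by pairwise disjoint small disks, each subordinate to $\mathcal{U}$. Hence, by Theorem~\ref{thm:general-notion-of-algebra} and the remark following it, the pair $\cat{E}_1\to\cat{E}$ may be substituted for Lurie's pair $\Disk(M)\to\Dis(M)$, realizing $\Alg_M(\cat{A})$ as the $\infty$-category of locally constant, multiplicative functors on $\cat{E}$. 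Intersecting with $\Open(U_\sigma)$ yields, for every $\sigma$, a basis $\cat{E}(U_\sigma)=\{V\in\cat{E}\ :\ V\sub U_\sigma\}$ with the same properties, and these identifications are compatible with the restriction functors; so the entire \v{C}ech diagram is realized in terms of categories of locally constant multiplicative functors on the system $\sigma\mapsto\cat{E}(U_\sigma)$.

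The decisive step is then to prove that this system is a \emph{cosheaf}. Concretely: the association $V\mapsto\cat{E}(V)$, regarded in the appropriate $\infty$-category encoding the multiplicative structure on bases (colored operads, or their symmetric monoidal envelopes) and after inverting the isotopy morphisms so as to impose local constancy, should carry $\mathcal{U}$ to a colimit diagram,
\[
\hocolim_{\sigma}\cat{E}(U_\sigma)\;\longequivto\;\cat{E}(M).
\]
What makes this plausible is baked into the definition of $\cat{E}(M)$: every connected member lies in a single $U_i$, hence so does every operation with connected target, while disconnected members and operations with disconnected target are determined by multiplicativity, and the only identifications among the constituent systems are those recorded on the overlaps $U_\sigma$. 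Granting the displayed equivalence, one applies the functor sending a structured basis to its $\infty$-category of locally constant multiplicative functors into $\cat{A}$---which converts such colimits into limits of $\infty$-categories---to obtain the equivalence we want, and then lets $\mathcal{U}$ vary.

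I expect the colimit statement of the previous paragraph to be the main obstacle, and it is precisely here that factorizingness of $\cat{E}$ and local constancy must be used together. The argument I have in mind is to first use local constancy to replace an arbitrary member of $\cat{E}(M)$---and an arbitrary inclusion between two such---by a \emph{refinement} that is small relative to $\mathcal{U}$, so that its connected constituents and the inclusion data among them live over individual $U_\sigma$, and then to identify the ``small'' part of $\cat{E}(M)$ with the asserted colimit by a cofinality argument over the poset of refinements of $\mathcal{U}$, keeping careful track of coherences. This is the same mechanism underlying the descent for factorizing covers in Theorem~\ref{thm:factorizing-seifert-vk}; indeed the present theorem may be viewed as a categorified, ``coefficient-free'' counterpart of it, and once that technique is in place the remaining verifications are formal.
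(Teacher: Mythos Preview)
Your setup coincides with the paper's: both pick the basis $\cat{E}_1$ of disks subordinate to the cover and the freely generated factorizing basis $\cat{E}$, and both invoke Theorem~\ref{thm:general-notion-of-algebra} to identify $\Alg_M(\cat{A})$ with $\Alg^{\loc}_{\cat{E}_1}(\cat{A})$. The divergence is in what comes next. You aim for a cosheaf statement $\hocolim_\sigma\cat{E}(U_\sigma)\equivto\cat{E}(M)$ in some category of localized multicategories, then deduce the limit of algebra categories formally; you acknowledge this colimit is the ``main obstacle'' and offer only a sketch. The paper instead constructs the inverse to the restriction functor by hand: given a compatible family $(A_i)$ on the \v{C}ech diagram, set $B(D):=\lim_{i\in\cat{C}_D}A_i(D)$ for each $D\in\cat{E}_1$, where $\cat{C}_D=\{i:\,D\sub\chi(i)\}$. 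The single observation driving the proof is that $\cat{C}_D=\Simp_{/S_D}^{\op}$ with $S_D=\{s:\,D\sub U_s\}$, hence has contractible classifying space, so $B(D)$ is canonically equivalent to any one $A_i(D)$; extending $B$ multiplicatively to $\cat{E}$ and left-Kan-extending to $\Open(M)$ then gives a locally constant factorization algebra by Theorem~\ref{thm:general-notion-of-algebra}, and one checks immediately that this inverts restriction.

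Your colimit-of-operads approach is not wrong in principle, but if you unwind what it demands on colours and on spaces of multimaps you will find yourself needing exactly the contractibility of the $\cat{C}_D$ (and its labelled-configuration analogues). The paper's route extracts that one elementary fact and uses it directly, avoiding the detour through a colimit statement that you would ultimately have to reduce to the same ingredient. Your sketch via ``refinements'' and cofinality is vague at precisely this point and would need to be replaced by the contractibility observation to go through.
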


It follows that there is a notion of a locally constant factorization
algebra on an orbifold.

\subsection{Twisted product formula}
\setcounter{subsubsection}{-1}

As an application of our investigation of the descent properties of
factorization algebras, we have obtained the following basic theorem.
In the special case where the manifolds are the Euclidean spaces, we
recover a classical theorem of Dunn \cite{dunn}.
(See Remark \ref{rem:dunn-intro} below for the precise relation to his
theorem.)

\begin{theorem}[Theorem \ref{thm:prod-formula}]
\label{thm:prod-intro}
Let $B$, $F$ be manifolds.
Then, the restriction functor
\[
\Alg_{F\times B}(\cat{A})\longto\Alg_B(\Alg_F(\cat{A}))
\]
is an equivalence.
\end{theorem}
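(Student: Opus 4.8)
The plan is to deduce the product formula from the descent machinery established earlier, especially Theorem~\ref{thm:general-notion-of-algebra} on changing the pair of bases, Theorem~\ref{thm:descent-of-categories-of-algebras} on $M\mapsto\Alg_M(\cat{A})$ being a sheaf, and the factorizing Seifert--van Kampen result Theorem~\ref{thm:factorizing-seifert-vk}. The key observation is that a locally constant factorization algebra on $F\times B$ can be evaluated on the factorizing basis $\cat{E}$ consisting of open subsets of the form $D\times U$, where $D$ ranges over disks in $F$ and $U$ over disks (or disjoint unions of disks) in $B$ — together with their disjoint unions in $F\times B$. This is a factorizing basis of $F\times B$, closed under disjoint union, consisting of submanifolds homeomorphic to disjoint unions of disks, so by the remark following Theorem~\ref{thm:general-notion-of-algebra} there is a suitable $\cat{E}_1$ and the category of factorization algebras computed from $\cat{E}$ agrees with $\Alg_{F\times B}(\cat{A})$.

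First I would set up the restriction functor precisely: given $A\in\Alg_{F\times B}(\cat{A})$ and an open $U\sub B$, the assignment $D\mapsto A(D\times U)$ for disks $D\sub F$ (and its extension to $\Dis(F)$) defines a locally constant factorization algebra on $F$, i.e.\ an object of $\Alg_F(\cat{A})$; functoriality and the factorization (multiplicativity) property in the $U$-variable follow because disjoint $U,U'$ give disjoint $D\times U, D\times U'$ inside $F\times B$ and $A$ is a prealgebra. This produces a prealgebra on $B$ with values in $\Alg_F(\cat{A})$, and one checks it is a factorization algebra using that $\Alg_F(\cat{A})$ inherits sifted-colimit-completeness and the descent condition transports: covering conditions for $U$ in $B$ become covering conditions for $\{D\times U_i\}$ in $F\times B$ after fixing $D$, and locally constant factorization algebras satisfy descent for such covers by Theorem~\ref{thm:factorizing-seifert-vk}. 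Symmetrically, the symmetric monoidal structure on $\Alg_F(\cat{A})$ (pointwise tensor) is what makes ``$\Alg_F(\cat{A})$-valued factorization algebra on $B$'' meaningful, and the construction is manifestly functorial in open embeddings of $B$.

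To prove this functor is an equivalence, I would construct the inverse by the same basis $\cat{E}$: an object of $\Alg_B(\Alg_F(\cat{A}))$ assigns to each disk $U\sub B$ a locally constant factorization algebra on $F$, hence a value on each $D\times U$, and these values are multiplicative in both variables and satisfy the gluing conditions for $\cat{E}$; by Theorem~\ref{thm:general-notion-of-algebra} this data assembles to a unique object of $\Alg_{F\times B}(\cat{A})$. The two composites are identities by the uniqueness/universal-property half of Theorem~\ref{thm:general-notion-of-algebra}: both sides are determined by their restriction to $\cat{E}$ (resp.\ to disks in $B$ valued in algebras over disks in $F$), and the two constructions are mutually inverse on that level essentially tautologically. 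An equivalent and perhaps cleaner packaging: apply the sheaf property of $M\mapsto\Alg_M(\cat{A})$ (Theorem~\ref{thm:descent-of-categories-of-algebras}) to a good cover of $B$ by disks, reducing to the case $B=\R^m$; then $\Alg_{F\times\R^m}(\cat{A})\equivwith\Alg_{\R^m}(\Alg_F(\cat{A}))$ should follow by iterating the one-dimensional case $\Alg_{F\times\R}(\cat{A})\equivwith\Alg_{\R}(\Alg_F(\cat{A}))$ and invoking that an $E_1$-algebra object internal to a symmetric monoidal $\infty$-category of factorization algebras is the same as a factorization algebra on $F\times\R$ — this is where the Dunn-type phenomenon enters. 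One must also match the naturality in $B$ of both sides, which again comes from the universal characterization by $\cat{E}$.

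The main obstacle I expect is verifying that the prealgebra on $B$ built from $A$ genuinely satisfies the \emph{full} gluing (factorization-algebra) condition with values in $\Alg_F(\cat{A})$, not merely the multiplicativity on disjoint opens: one needs that the relevant sifted homotopy colimits computing sections over a general open $U\sub B$, formed inside $\Alg_F(\cat{A})$, are computed ``pointwise in $F$'' and agree with the sections of $A$ over $F\times U$ — i.e.\ that evaluation $\Alg_F(\cat{A})\to\cat{A}$ at a disk $D\sub F$ commutes with the colimits in question and detects equivalences jointly over all $D$. This is plausible because those colimits are sifted and the monoidal product on $\cat{A}$ preserves sifted colimits variable-wise (our standing hypothesis), so they are reflected and preserved by the family of evaluation functors; but it has to be checked against the precise form of the gluing condition and the factorizing-cover descent of Theorem~\ref{thm:factorizing-seifert-vk}, and is the technical heart of the argument. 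Everything else — functoriality, the construction of the inverse, the reduction to $B=\R^m$ — is formal once Theorem~\ref{thm:general-notion-of-algebra} is in hand.
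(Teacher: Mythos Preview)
Your core strategy---apply Theorem~\ref{thm:general-notion-of-algebra} to the factorizing basis $\cat{E}$ of $F\times B$ generated by products of disks---is exactly the paper's approach, and your description of $\cat{E}$ and $\cat{E}_1$ matches the paper's (there $\cat{E}_1=\Disk(F)\times\Disk(B)$ as a full subposet of $\Disk(F\times B)$, and $\cat{E}$ is obtained by closing under disjoint union in $F\times B$).

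Where you diverge is in the execution after invoking Theorem~\ref{thm:general-notion-of-algebra}. You propose to construct the restriction functor by hand, verify that it lands in \emph{factorization} algebras on $B$ (not just prealgebras), and then build an inverse---and you correctly flag the gluing verification as the ``main obstacle''. The paper sidesteps this entirely. Once Theorem~\ref{thm:general-notion-of-algebra} gives $\Alg_{F\times B}(\cat{A})\equivwith\Alg^\loc_{\cat{E}}(\cat{A})$, the remaining step is to observe that $\Alg^\loc_{\cat{E}}(\cat{A})\equivwith\Alg_B(\Alg_F(\cat{A}))$ \emph{tautologically}: a locally constant algebra on the multicategory $\cat{E}_1=\Disk(F)\times\Disk(B)$ is, by currying, the same data as a locally constant algebra on $\Disk(B)$ valued in locally constant algebras on $\Disk(F)$. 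No gluing condition needs to be checked, because both sides are already expressed purely in terms of $\cat{E}_1$; the only prerequisite is the paper's one-line remark that $\Alg_F(\cat{A})$ inherits sifted colimits and their preservation by $\otimes$, so that Theorem~\ref{thm:general-notion-of-algebra} applies on the $B$-side as well. Your ``obstacle'' is thus an artifact of trying to verify the Kan-extension property directly rather than recognizing that both categories have the same multicategory presentation.

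Your alternative route (reduce to $B=\R^m$ via the sheaf property and iterate the one-dimensional case) is not used by the paper and is somewhat circular as stated: the base case $\Alg_{F\times\R}\equivwith\Alg_\R(\Alg_F)$ is not obviously easier than the general case, and you would still need the same basis argument to establish it.
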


\begin{remark}
If one swaps the factors of $B\times F$, then on the side of algebras,
one recovers the canonical equivalence
$\Alg_B(\Alg_F)\equivwith\Alg_F(\Alg_B)$.
\end{remark}

\begin{remark}\label{rem:dunn-intro}
Dunn in fact obtains an equivalence at the level of operads
\cite{dunn}.
In particular, in his case, the equivalence of algebras holds
without any assumption on the target category.
Even though our theorem applies to any manifold, the equivalence in
this generality is proved only at the level of the category of
algebras in this paper, since our proof depends on the property of the
target category for the algebras.

Another slight difference with Dunn's result is that he considers
Boardman--Vogt's little cubes operad \cite{bv} instead of
factorization algebras on a Euclidean space.
We can use Theorem \ref{thm:general-notion-of-algebra} once again to
show that the difference is not essential.
See Remark \ref{rem:dunn} for the details.
\end{remark}

\begin{remark}\label{sec:product-formula-by-other}
A different proof of Theorem is obtained by Ginot by relying on Dunn's
theorem \cite{ginot}.
A version of Theorem for general (i.e., not assumed locally contant)
factorization algebras is described by Calaque in \cite{calaque} with
a (sketch of) proof by a strategy similar to ours (see Section
\ref{sec:related-work}).
We remark that the theorem for locally constant algebras may not be a
corollary of this since comparison of the ``locally constant'' objects
through Calaque's equivalence would perhaps not be straightforward.
\end{remark}
 
We have also obtained a natural generalization of this, where the
product is replaced by a fibre bundle (i.e., a `twisted' product).
In this case, the algebras on the right hand side needs to be twisted.
Namely, it should take values in an \emph{algebra} of categories on
$B$.
Once we allow this twisting, it is natural to consider further
twisting for algebras.
Namely, we consider algebras on the total space $E$ of a fibre bundle
taking values in a locally constant factorization algebra $\cat{A}$ of
categories on $E$.
For such $\cat{A}$, we have defined an algebra $\Alg_{E/B}(\cat{A})$
of categories on the base manifold $B$, which is a twisted version of
$\Alg_F$ in the previous theorem.
The following generalization of the previous theorem follows from (the
infinity $2$-categorical generalizations of) the previous theorem and
the descent results.

\begin{theorem}[Theorem \ref{thm:twisted}]
\label{thm:twisted-intro}
Let $B$ be a manifold, and let $E\to B$ be a smooth fibre bundle over
$B$.
For a locally constant factorization algebra $\cat{A}$ on $E$ of
infinity $1$-categories, there is a natural equivalence
\[
\Alg_E(\cat{A})\longequivto\Alg_B(\Alg_{E/B}(\cat{A}))
\]
of infinity $1$-categories, given by a suitable `restriction'
functor.
\end{theorem}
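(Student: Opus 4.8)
The plan is to deduce this from the untwisted product formula (Theorem~\ref{thm:prod-intro}) by descent along the base $B$, exploiting that a fibre bundle is locally trivial. Two prerequisites should be set up first. (i) The infinity $2$-categorical generalizations of Theorem~\ref{thm:prod-intro} and of Theorem~\ref{thm:descent-of-categories-of-algebras-intro}: the same statements with the coefficients allowed to be a locally constant factorization algebra of infinity $1$-categories (equivalently, with the target a symmetric monoidal infinity $2$-category subject to the same closure hypotheses). The proofs in the body use only closure under sifted homotopy colimits, variable-wise preservation of these by the multiplications, and the combinatorics of the pair of bases $\Disk(M)\to\Dis(M)$ together with Theorem~\ref{thm:general-notion-of-algebra}; none of these ingredients is sensitive to the categorical level, so they carry over. (ii) The construction of the coefficient object $\cat{B}:=\Alg_{E/B}(\cat{A})$ as a locally constant factorization algebra of infinity $1$-categories on $B$, together with the restriction functor described below.

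For an open $U\sub B$ let $E_U$ denote the restricted bundle and $\cat{A}_U$ the restriction of $\cat{A}$ to $E_U$. A factorization algebra $P$ on $E$ restricts, over each such $U$, to an object of $\Alg_{E_U/U}(\cat{A}_U)=\cat{B}(U)$, compatibly with disjoint unions in $B$ and with further restriction in $U$; this is the comparison functor
\[
\rho\colon\Alg_E(\cat{A})\longto\Alg_B(\cat{B}),\qquad\cat{B}=\Alg_{E/B}(\cat{A}).
\]
The next step is to observe that both source and target of $\rho$ are sheaves of infinity $1$-categories on $B$. For the target this is the infinity $2$-categorical form of Theorem~\ref{thm:descent-of-categories-of-algebras-intro} applied to the coefficient factorization algebra $\cat{B}$. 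For the source, $V\mapsto\Alg_V(\cat{A}|_V)$ is a sheaf on $E$ by the same generalized theorem, and since $E_{U\intersect U'}=E_U\intersect E_{U'}$ the \v{C}ech nerve of an open cover $\{U_i\}$ of $U$ pulls back along the bundle projection to the \v{C}ech nerve of the open cover $\{E_{U_i}\}$ of $E_U$; hence $U\mapsto\Alg_{E_U}(\cat{A}_U)$ satisfies descent on $B$ as well. As $\rho$ is a morphism of sheaves on $B$, it is an equivalence as soon as it is an equivalence after restriction to a basis of the topology of $B$, and I would take the basis consisting of those opens $U$ over which the bundle $E_U\to U$ is trivial (these do form a basis, since if $E$ trivializes over $U_0$ it trivializes over every open subset of $U_0$).

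Finally, fix such a trivializing open $U$ and an identification $E_U\cong F\cross U$. Under this identification $\cat{B}|_U=\Alg_{(F\cross U)/U}(\cat{A}_U)$ is the factorization algebra of infinity $1$-categories on $U$ that $\Alg_F$ assigns to $\cat{A}_U$ fibrewise over $U$, and $\rho|_U$ becomes the canonical restriction functor
\[
\Alg_{F\cross U}(\cat{A}_U)\longto\Alg_U\bigl(\Alg_{(F\cross U)/U}(\cat{A}_U)\bigr),
\]
which is precisely the product formula for the trivial bundle $F\cross U\to U$ with coefficients in the locally constant factorization algebra of categories $\cat{A}_U$. This is an equivalence by the infinity $2$-categorical generalization of Theorem~\ref{thm:prod-intro} (fibre $F$, base $U$). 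Gluing these local equivalences along the sheaf structures of the previous paragraph yields the theorem.

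I expect the genuine work to lie in the two prerequisites rather than in the gluing. First, one must make precise and verify the infinity $2$-categorical generalizations of Theorems~\ref{thm:prod-formula} and~\ref{thm:descent-of-categories-of-algebras} with coefficients of categorical type (and, for statements where $\cat{A}$ itself is a factorization algebra of categories, iterate one more categorical level); this is conceptually routine given Theorems~\ref{thm:general-notion-of-algebra} and~\ref{thm:factorizing-seifert-vk}, but demands care with the higher-categorical bookkeeping. Second, one must construct $\cat{B}=\Alg_{E/B}(\cat{A})$ together with $\rho$ and check that $U\mapsto\Alg_{E_U/U}(\cat{A}_U)$ really does satisfy the factorization and gluing axioms on $B$, rather than being merely defined object-wise -- this, too, is where I would expect the main technical effort to concentrate.
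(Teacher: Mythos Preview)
Your proposal is correct and follows essentially the same route as the paper: reduce by descent on $B$ to the case of a trivial bundle, and there invoke the product formula with twisted coefficients. The paper uses the specific factorizing basis $\Dis(B)$ rather than arbitrary trivializing opens, obtains descent for the source by first applying the $2$-categorical version of Theorem~\ref{thm:basic-descent-of-category-of-algebras} to $\Alg_E(\Cat)$ and then passing to $\cat{A}$ via corepresentability, and packages the local step as Lemma~\ref{lem:non-twisted-product-for-twisted-algebra}, proved from Theorem~\ref{thm:twisted-general-algebra}; but these are minor differences in bookkeeping, and you have correctly identified where the real content lies.
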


\begin{remark}
For this theorem, no assumption on sifted colimits are needed for
$\cat{A}$.
If $\cat{A}$ is instead a single fixed symmetric
monoidal category, there is actually a slight difference between an
algebra in $\cat{A}$ (for which Theorem \ref{thm:prod-intro} may fail
without assumption on sifted colimits), and an algebra taking values
in the `constant' algebra at $\cat{A}$ (to which
Theorem \ref{thm:twisted-intro} \emph{always} applies).
The assumption on sifted colimits simply ensures equivalence of these
two notions of an algebra.
\end{remark}

\subsection{Notes on related works}
\label{sec:related-work}
The descent property of the topological chiral homology for a
factorizing cover (as follows from Theorem
\ref{thm:factorizing-seifert-vk}, see Section
\ref{sec:descent-factorization-intro}) was proved earlier by
Ginot--Tradler--Zeinalian \cite{gtz}.
Their proof uses a theorem on the descent of the infinity $1$-category
of factorization algebras, similar to our Theorem
\ref{thm:descent-of-categories-of-algebras-intro} but in non-locally
constant setting.
The theorem is due to Costello--Gwilliam \cite{cg}.
Note that we prove Theorem
\ref{thm:descent-of-categories-of-algebras-intro} using Theorems
\ref{thm:general-notion-of-algebra} \emph{and}
\ref{thm:factorizing-seifert-vk}.
We do not know how to deduce Theorem
\ref{thm:descent-of-categories-of-algebras-intro} directly from the
theorem of Costello and Gwilliam.
The question is whether local constancy of a factorization algebra is
a `local' property in some useful manner, to which Theorem
\ref{thm:general-notion-of-algebra} gives one answer.

We learned about Calaque's work \cite{calaque} after our work was
completed.
He considers the notion of ``factorizing basis'' based on a similar
idea to our Definition \ref{def:factorizing-basis}.
Using this, he considers a theorem (Theorem 2.1.9 op.~cit.) which is
similar in spirit to our Theorem \ref{thm:general-notion-of-algebra},
for not necessarily locally constant factorization algebras.
Theorem \ref{thm:general-notion-of-algebra} is more involved than this
theorem, since it additionally answers a question on the
localness of local constancy as mentioned.
Calaque's proof of the product formula mentioned in Remark
\ref{sec:product-formula-by-other} uses this theorem ``2.1.9'',
similarly to our use of Theorem \ref{thm:general-notion-of-algebra}
for Theorem \ref{thm:prod-intro}.

\subsection{Notes on the relation to other articles by the author}
This paper, together with \cite{poincare} and the present author's
paper
\begin{itemize}
\item[{[a]}] \emph{Koszul duality between 
    $E_n$-algebras and coalgebras in a filtered category.}
  arXiv:1409.6943,
\end{itemize}
is based on his Ph.D.~thesis (accepted in April 2014).
The present article is logically independent of either of
\cite{poincare}, [a].

\subsection{Outline}
\setcounter{subsubsection}{-1}

\textbf{Section \ref{sec:terminology-notation}} is for introducing
conventions which are used throughout the main body.

In \textbf{Section \ref{sec:f-alg}}, we review Lurie's definitions and
results, and discuss descent properties of factorization algebras.

In \textbf{Section \ref{sec:generalize-apply}}, we discuss further
results including the twisted product formula.

\subsection*{Acknowledgment}
This paper is based on part of my Ph.D.~thesis.
I am particularly grateful to my advisor Kevin Costello for his
extremely patient guidance and continuous encouragement and support.
My contribution through this work to the subject of factorization
algebra can be understood as technical work of combining the ideas and
work of the pioneers such as Jacob Lurie, John Francis, and Kevin.
I am grateful to those people for their work, and for making
their ideas accessible.
Special thanks are due to John for detailed comments and
suggestions on the drafts of my thesis, which were essential for
many improvements of both the contents and exposition.
Many of those improvements were inherited by this paper.
I am also grateful to Owen Gwilliam for interesting conversations,
which directly influenced some parts of the present work.
I am grateful to Josh Shadlen, Owen, and Yuan Shen for their
continuous encouragement.

\section{Terminology and notations}
\label{sec:terminology-notation}
\setcounter{subsubsection}{-1}
\setcounter{equation}{-1}

\subsubsection{}

By a \kore{$1$-category}, we always mean an \emph{infinity}
$1$-category.
We often call a $1$-category (namely an infinity $1$-category) simply
a \kore{category}.
A category with discrete sets of morphisms (namely, a ``category''
in the more traditional sense) will be called a \emph{discrete}
category.

In fact, all categorical and algebraic terms will be used in
\emph{infinity} ($1$-) categorical sense without further notice.
Namely, categorical terms are used in the sense enriched in the
\emph{infinity} $1$-category of spaces, or equivalently, of infinity
groupoids, and algebraic terms are used freely in the sense
generalized in accordance with the enriched categorical structures.

For example, for an integer $n$, by an \emph{$n$-category}
(resp.~\emph{infinity} category), we mean an \emph{infinity}
$n$-category (resp.~infinity infinity category).
We also consider multicategories.
By default, multimaps in our multicategories will form
a \emph{space} with all higher homotopies allowed.
Namely, our ``\emph{multicategories}'' are ``infinity operads'' in the
terminology of Lurie's book \cite{higher-alg}.

\begin{remark}
We usually treat a space relatively to the structure of the standard
(infinity) $1$-category of spaces.
Namely, a ``\emph{space}'' for us is usually no more than an object of
this category.
Without loss of information, we shall freely identify a space in this
sense with its fundamental infinity groupoid, and call it also a
``\emph{groupoid}''.
Exceptions in which the term ``space'' means not necessarily
this, include a ``Euclidean space'', the ``total space'' of a fibre
bundle, etc., in accordance with the common customs.
\end{remark}

\subsubsection{}
If $\cat{C}$ is a category and $x$ is an object of $\cat{C}$,
then we denote by $\cat{C}_{/x}$, the ``\emph{over}'' category, of
objects of
$\cat{C}$ lying over $x$, i.e., equipped with a map to $x$.
We denote the ``\emph{under}'' category for $x$, in other words,
$\bigl((\cat{C}^\op)_{/x}\bigr)^\op$, by $\cat{C}_{x/}$.

More generally, if a category $\cat{D}$ is equipped with a functor to
$\cat{C}$, then we define
$\cat{D}_{/x}:=\cat{D}\times_{\cat{C}}\cat{C}_{/x}$, and similarly for
$\cat{D}_{x/}$.
Note here that $\cat{C}_{/x}$ is mapping to $\cat{C}$ by the functor
which forgets the structure map to $x$.
Note that the notation is abusive in that the name of the functor
$\cat{D}\to\cat{C}$ is dropped from it.
In order to avoid this abuse from causing any confusion, we shall use
this notation only when the functor $\cat{D}\to\cat{C}$ that we are
considering is clear from the context.

\subsubsection{}

By the \kore{lax colimit} of a diagram in the category $\Cat$ of
categories (of a limited size), indexed by a category $\cat{C}$, we
mean the Grothendieck construction.
We choose the variance of the laxness so the lax colimit projects to
$\cat{C}$, to make it an op-fibration over $\cat{C}$, rather
than a fibration over $\cat{C}^\op$.
(In particular, if $\cat{C}=\cat{D}^\op$, so the functor is
contravariant on $\cat{D}$, then the familiar fibred category over
$\cat{D}$ is the \emph{op}-lax colimit over $\cat{C}$ for us.)
Of course, we can choose the variance for lax \emph{limits} compatibly
with this, so our lax colimit generalizes to that in any $2$-category.

\section{Descent properties of factorization algebras}
\label{sec:f-alg}
\setcounter{subsection}{-1}
\setcounter{equation}{-1}

\subsection{Introduction}

In this section, we introduce the notion of a locally constant
factorization algebra following Lurie (although he did not use this
particular term), and then investigate its descent properties.
This will be a study of the descent properties of Lurie's
``topological chiral homology''.

Many notions and notations we introduce in this section are from
Lurie's book ``Higher Algebra'' \cite{higher-alg}, which has an index
and an index for notations.

\subsection{Locally constant factorization algebra}
\setcounter{subsubsection}{-1}

Given a manifold $M$, let us denote by $\Open(M)$ the poset of open
submanifolds of $M$.
It (considered as a category where a map is an inclusion) has a
partially defined symmetric monoidal structure given by the
\emph{disjoint} union in $M$,
$\Disj_S\colon\Open(M)^{(S)}\to\Open(M)$, where the domain here is
the full subposet of $\Open(M)^S$ consisting of \emph{pairwise
  disjoint} family of open submanifolds of $M$ indexed by the finite
set $S$.

\begin{definition}
Let $\cat{A}$ be a symmetric monoidal category.
Then a \kore{prefactorization algebra} (or just a
``\kore{prealgebra}'') on $M$ (valued) in $\cat{A}$, is a symmetric
monoidal functor $\Open(M)\to\cat{A}$.

We say that a prealgebra is \kore{locally constant} if $A$ takes every
inclusion $D\into D'$ between disks in $M$ (namely, open submanifolds
which is homeomorphic to an open disk), to an equivalence
$A(D)\equivto A(D')$.

The category of \emph{locally constant} prealgebras on $M$ in
$\cat{A}$ will be denoted by $\Pre\Alg_M(\cat{A})$.
\end{definition}

Let $M$ be a manifold.
Let $n$ denote its dimension.
Then, following Lurie, we denote by $\Disk(M)$, the poset consisting
of open submanifolds $U\sub M$
homeomorphic to an open disk
of dimension $n$ (by an unspecified homeomorphism).
This poset has a structure of a symmetric multicategory where a
multimap is a disjoint inclusion in $M$, so for every fixed source and
target, the space of multimaps is either empty or contractible.

Recall that given symmetric multicategories $\cat{A}$, $\cat{B}$, an
\kore{algebra} on $\cat{B}$ in $\cat{A}$ is a morphism
$\cat{B}\to\cat{A}$ of symmetric multicategories.

The following is a notion equivalent to an algebra over Lurie's
multicategory $\Elu_M$ from \cite{higher-alg}.
See Theorem 5.2.4.9 there, also restated here as Theorem
\ref{thm:lurie-multilocalization}.
Another equivalent notion has a natural name, and we use that
name.
All notions and equivalence between them will be reviewed below.

\begin{definition}
Let $\cat{A}$ be a symmetric monoidal category.
Then a \kore{locally constant factorization algebra} (or just a
``(locally constant) \kore{algebra}'', often in this work) on $M$
valued in $\cat{A}$, is an algebra on $\Disk(M)$ in $\cat{A}$ whose
underlying functor (of ``colours'') inverts any map in $\Disk(M)$
(which is an inclusion of a single disk into another).
The category of locally constant algebras on $M$ in $\cat{A}$ will be
denoted by $\Alg_M(\cat{A})$.
\end{definition}

\begin{remark}
This definition makes sense for $\cat{A}$ just a
symmetric multicategory, but for comparison with other notions, it is
convenient to have $\cat{A}$ to be symmetric monoidal.
\end{remark}

Following Lurie, let us denote by $\Dis(M)$ the poset of open
submanifolds $U\sub M$
homeomorphic (by an unspecified homeomorphism) to the disjoint union
of a finite number of disks.
It has a partially defined monoidal structure given by the disjoint
union in $M$.
There is a functor $\Disk(M)\to\Dis(M)$ of multicategories, so a
symmetric monoidal functor $A\colon\Dis(M)\to\cat{A}$ to a symmetric
monoidal category $\cat{A}$ restricts to a
morphism $\Disk(M)\to\cat{A}$ of symmetric multicategories.
Moreover, any morphism $\Disk(M)\to\cat{A}$ with $\cat{A}$ symmetric
monoidal category extends
uniquely to a symmetric monoidal functor $\Dis(M)\to\cat{A}$.
Namely, an algebra on $M$ can be also described as a symmetric
monoidal functor $\Dis(M)\to\cat{A}$.

\begin{remark}
Again, this is still true if the monoidal structure of $\cat{A}$ is
only partially defined, but this is not an important point for us.
\end{remark}

Note that there is a (necessarily symmetric) monoidal embedding
$\Dis(M)\into\Open(M)$.
Given a functor $\Dis(M)\to\cat{A}$, one has its left Kan extension
$\Open(M)\to\cat{A}$ at least if $\cat{A}$ has colimits.

If the monoidal multiplication in $\cat{A}$ distributes over colimits, then
the Kan extension $\Open(M)\to\cat{A}$ of a symmetric monoidal functor
$\Dis(M)\to\cat{A}$ becomes symmetric monoidal in a unique way, so its
restriction to $\Dis(M)$ becomes the original symmetric monoidal
functor.
In fact, Lurie proves that relevant colimits here can be described as
sifted colimits (see Propositions \ref{prop:cofinality-of-inversion}
and \ref{prop:lurie-siftedness} below).
Therefore, it sufficed to consider just sifted colimits.

To summarize, if the target category $\cat{A}$ has sifted colimits,
and the monoidal multiplication in $\cat{A}$ distributes over sifted colimits
(equivalently, sifted colimits are preserved by the monoidal multiplication),
then we have a functor $\Alg_M(\cat{A})\to\Pre\Alg_M(\cat{A})$ given
by left Kan extension.
This functor is clearly fully faithful, and it is left adjoint to the
functor given by restriction through the functor $\Disk(M)\to\Open(M)$
of symmetric multicategories.
In this way, $\Alg_M(\cat{A})$ is a right localization of
the category of locally constant prealgebras.

Within the category of locally constant prealgebras, the algebras can
be characterized as those prealgebras which, as a functor, is the
left Kan extension of its restriction to $\Dis(M)$.
We often identify $\Alg_M$ with this right localized full subcategory
of $\Pre\Alg_M$.

The following is basic.
\begin{example}[See also Francis'
  \cite{francis}]\label{ex:sheaf-as-coalgebra}
Let $\cat{A}$ be a category closed under small colimits, and let us
consider it as a symmetric monoidal category under the Cartesian
coproduct.
This symmetric monoidal multiplication
$\cat{A}\times\cat{A}\to\cat{A}$ takes colimits in
$\cat{A}\times\cat{A}$ to colimits in the target, so sifted colimits
are preserved variablewise, so the arguments above applies to this
symmetric monoidal structure.

In this case, any functor $\Dis(M)\to\cat{A}$ has a unique lax
symmetric monoidal structure, and this structure is strong monoidal if
and only if the functor is the left Kan extension (in the canonical
way) from its restriction to $\Disk(M)$.

It follows that a locally constant algebra in $\cat{A}$ with respect
to the Cartesian coproduct, is the same thing as a locally constant
cosheaf in $\cat{A}$.

Dually, if $\cat{A}$ is closed under limits, then locally constant
algebra in $\cat{A}^\op$ with respect to the Cartesian product of
$\cat{A}$, is the same thing as a locally constant \emph{sheaf} valued
in $\cat{A}$.
\end{example}

\subsection{Assumption on the target category}
\label{sec:assumption-on-colimit}
\setcounter{subsubsection}{-1}

From now on, in this paper, we assume that the target category
$\cat{A}$ of prealgebras has sifted colimits, and the monoidal multiplication
functor on $\cat{A}$ preserves sifted colimits variable-wise.
Equivalently, the monoidal multiplication should preserve sifted
colimits for all the variables at the same time.

\subsection{Descent for factorizing covers}
\setcounter{subsubsection}{-1}

Note our assumption just stated.

For a prealgebra on $M$, being the Kan extension of its restriction to
disjoint union of disks is a kind of descent property.
We shall observe a more general descent satisfied by a locally
constant algebra.

\begin{definition}
Let $\cat{C}$ be a category and let
$\chi\colon\cat{C}\to\Open(M)$ be a functor.
For $i\in\cat{C}$, denote $\chi(i)$ also by $U_i$ within this
definition.
We shall call this data a \kore{factorizing cover} which is \kore{nice
  in Lurie's sense}, or briefly, \kore{factorizing l-nice cover}, of
$M$ if for any non-empty finite subset $x\sub M$, the full subcategory
$\cat{C}_x:=\{i\in\cat{C}\:|\:x\subset U_i\}$ of $\cat{C}$ has
contractible classifying space.
\end{definition}

\begin{remark}\label{rem:def-factorizing-cover}
The definition is inspired by the definition of a factorizing cover by
Costello--Gwilliam \cite{cg}, and a condition introduced by Lurie for
his \emph{generalized Seifert--van Kampen theorem}
\cite[Appendix]{higher-alg}.
``Nice'' is Lurie's description of a cover satisfying his conditions,
where he does not intend this to be a part of his terminology.
However, we borrow this word ``nice'' and make it our term for the
notion above, for unfortunate lack of creativity for a better name.
\end{remark}

\begin{example}
If $M$ is empty, then any cover of $M$, including the one indexed by
the empty category, is factorizing l-nice.
\end{example}

\begin{example}
The inclusion $\Dis(M)\to\Open(M)$ determines a factorizing l-nice
cover.
\end{example}

\begin{example}
Consider a cover of $M$ by a filtered (or ``directed'') inductive
system of open submanifolds of $M$.
Then this cover is factorizing l-nice.
\end{example}

\begin{example}\label{ex:factorizing-cover}
Suppose given an open cover $\cover{U}=\{U_s\}_{s\in S}$ of $M$
indexed by a set $S$.
For simplicity, assume that this cover is closed under taking finite
disjoint union.
If this is not satisfied, replace $S$ by the set of finite subsets $T$
of $S$ for which $U_t$ are pairwise disjoint for $t\in T$.
(For example, if $M=\kara$, then we are excluding the empty cover
indexed by $S=\kara$.)

Denote by $\Simp_{/S}$ the category of combinatorial simplices whose
vertices are labeled by elements of $S$.
Namely, its objects are finite non-empty ordinal $I$ equipped with a
set map $s\colon I\to S$.
Then the cover determines a functor
$\chi\colon(\Simp_{/S})^\op\to\Open(M)$ by
\[
(I,\,s\colon I\to S)\longmapsto U_s:=\Intersect_{i\in I}U_{s(i)}.
\]

In Costello--Gwilliam's terminology, the cover $\cover{U}$
is \kore{factorizing} if for this $\chi$, the category
$(\Simp_{/S})^\op_x$ is non-empty for every finite subset $x\sub M$
(equivalently if there is $i\in S$ for which $x\sub U_i$).

It is immediate to see that $\chi$ determines a factorizing l-nice
cover if (and only if) the cover is factorizing in
Costello--Gwilliam's sense.

Given a prealgebra $A$ on $M$, the descent complex for $\cover{U}$ of
Costello--Gwilliam is equivalent to $\colim_{(\Simp_{/S})^\op}A$.
\end{example}

The following generalizes the Kan extension property from the values
for disjoint union of disks.
\begin{theorem}\label{thm:factorizing-seifert-vk}
Let $A$ be a locally constant algebra on $M$ (in a symmetric monoidal
category $\cat{A}$ satisfying our conditions stated in
Section \ref{sec:assumption-on-colimit}).
Then for any factorizing l-nice cover determined by $\chi\colon
\cat{C}\to\Open(M)$, the map
$A(M)\from\colim_\cat{C}A\chi$ is an equivalence.
\end{theorem}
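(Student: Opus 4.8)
The plan is to reduce the statement to Lurie's generalized Seifert–van Kampen theorem by recognizing that the colimit $\colim_\cat{C} A\chi$ can be computed through an intermediate factorizing l-nice cover for which the result is already known — namely the cover $\Dis(M) \to \Open(M)$ — and then transporting along a suitable cofinality argument.

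First I would set up the comparison. Since $A$ is a locally constant algebra, it is (identified with) the left Kan extension of its restriction to $\Dis(M)$ along $\Dis(M) \into \Open(M)$; in particular, for any open $U \sub M$, we have $A(U) \equivwith \colim_{\Dis(U)} A$, where $\Dis(U)$ sits inside $\Open(M)$ via the embedding, and this colimit is sifted (by Propositions \ref{prop:cofinality-of-inversion} and \ref{prop:lurie-siftedness}, which the target category is assumed to accommodate). Applying this with $U = U_i = \chi(i)$ for each $i \in \cat{C}$, and with $U = M$, the claim $A(M) \from \colim_\cat{C} A\chi$ being an equivalence becomes the assertion that
\[
\colim_{D \in \Dis(M)} A(D) \;\longfrom\; \colim_{i \in \cat{C}}\, \colim_{D \in \Dis(U_i)} A(D)
\]
is an equivalence. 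The right-hand side is a colimit over the category $\int_{\cat{C}} \Dis(U_{(-)})$ (the lax colimit/Grothendieck construction of $i \mapsto \Dis(U_i)$), whose objects are pairs $(i, D)$ with $D$ a disjoint-union-of-disks open submanifold contained in $U_i$. So it suffices to show that the evident forgetful functor
\[
\int_{\cat{C}} \Dis(U_{(-)}) \longto \Dis(M), \qquad (i,D) \longmapsto D
\]
is \emph{cofinal} (final), since then the two colimits of the diagram $A$ agree.

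To prove cofinality, by Quillen's Theorem A (in the infinity-categorical form) I must show that for each fixed $D \in \Dis(M)$, the comma category of pairs $(i, D')$ with a map $D \into D'$ in $\Dis(M)$ has contractible classifying space. Because maps in $\Dis(M)$ are inclusions and $D$ is a \emph{compact-up-to-isotopy} disjoint union of disks, one reduces the comma category to the full subcategory on those $(i, D')$ with $D \sub D' \sub U_i$; and the key geometric input is that, since $D$ is a disjoint union of (finitely many) disks, choosing a finite subset $x \sub M$ consisting of one point in each component of $D$, the condition $D \sub U_i$ for the relevant shrinkings is controlled by $x \sub U_i$, i.e. by membership in $\cat{C}_x$. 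More precisely, I expect to show this comma category is cofiltered-ish over $\cat{C}_x$ (or admits a final/initial functor relating it to $\cat{C}_x$ together with a contractible space of disk-shrinkings), and since $\cat{C}_x$ has contractible classifying space by the definition of a factorizing l-nice cover, the comma category does too. This is where Lurie's generalized Seifert–van Kampen machinery (Theorem \ref{thm:lurie-multilocalization} and the surrounding appendix material) enters: one should be able to quote it directly once the cover $\int_{\cat C}\Dis(U_{(-)}) \to \Open(M)$ is checked to be ``nice in Lurie's sense'', the nontrivial point being exactly the contractibility of the slices $\cat{C}_x$, which the hypothesis supplies.

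\textbf{The main obstacle} I anticipate is the cofinality/contractibility step: carefully controlling the homotopy type of the comma category $(\int_{\cat C}\Dis(U_{(-)}))_{D/}$ and reducing it to $\cat C_x$ requires handling the "space of isotopies shrinking $D$ inside a given $U_i$" and showing this contributes nothing homotopically — essentially that the poset of $\Dis$-neighborhoods of a fixed disjoint union of disks, inside an open set containing it, is contractible (a standard but not entirely trivial fact about isotopy of disks, used already in Lurie's treatment). Once that is in hand, the rest is formal: invoke Theorem A to get cofinality, conclude the two colimits computing $A(M)$ and $\colim_\cat C A\chi$ coincide, using the assumption on sifted colimits in $\cat A$ to ensure all the Kan-extension colimits involved are well-behaved and that iterated colimits may be exchanged.
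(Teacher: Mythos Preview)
Your overall strategy is right in spirit, but the specific cofinality claim is false, and the failure is exactly at the point you flag as the main obstacle. The functor
\[
\textstyle\int_{\cat{C}} \Dis(U_{(-)}) \longto \Dis(M)
\]
is \emph{not} cofinal in general. For a concrete counterexample, take $M = \R$ with the factorizing l-nice cover generated (under finite disjoint union, as in Example~\ref{ex:factorizing-cover}) by all open intervals of length less than~$1$. For $D = (-10,10) \in \Dis(M)$ no member of the cover contains $D$, so the comma category over $D$ is empty. Your attempted reduction of that comma category to $\cat{C}_x$ for a finite set $x$ of points cannot work inside the poset $\Dis(M)$: maps there are bare inclusions, so there is no mechanism to ``shrink'' $D$; the condition $D \sub U_i$ is genuinely stronger than $x \sub U_i$.

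The fix, and this is what the paper does, is to replace $\Dis(M)$ by Lurie's category $\ccat{D}(M)$, whose morphisms incorporate isotopies. Since $A$ is locally constant it extends to $\ccat{D}(M)$ (Theorem~\ref{thm:lurie-multilocalization}), and since $\Dis(M) \to \ccat{D}(M)$ is cofinal (Proposition~\ref{prop:cofinality-of-inversion}) one may compute $A(M)$ and each $A(U_i)$ as colimits over $\ccat{D}(-)$ instead. The paper then proves (Proposition~\ref{prop:van-kampen-cofinality}) that $\colim_i \ccat{D}(U_i) \to \ccat{D}(M)$ \emph{is} cofinal: writing $\ccat{D}(U) = \laxcol_{D \in \ccat{D}} \Emb(D,U)$ and using Lemma~\ref{lem:standard-cofinality}, this reduces to showing that $\colim_i \Conf(S,U_i) \to \Conf(S,M)$ is an equivalence of spaces, which is a direct instance of Lurie's generalized Seifert--van~Kampen theorem, its hypothesis supplied precisely by the factorizing l-nice condition on $\cat{C}_{\supp x}$ for configurations $x$. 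The passage to $\ccat{D}(M)$ is exactly what makes your heuristic ``the comma category over $D$ is controlled by a finite set of points'' literally true, and it is the missing idea in your argument.
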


For the proof, we need another description of locally constant
algebras, due to Lurie.
We shall give the proof after we give the description in
Section \ref{sec:isotopy-invariance}.

\subsection{Isotopy invariance}
\label{sec:isotopy-invariance}
\setcounter{subsubsection}{-1}

\subsubsection{}

Let $M$ be a manifold, and let $n$ be its dimension.

Let $\Elu_M$ be the multicategory (i.e., an ``infinity operad'')
introduced by Lurie.
Its objects are the open submanifolds of $M$ homeomorphic to a disk
of dimension $n$.
The space of multimaps $\{U_i\}_{i\in S}\to V$ is that formed by an
embedding $f\colon\coprod_iU_i\into V$ together with an isotopy on
each $U_i$ from the defining inclusion $U_i\into M$ to $f\colon
U_i\into M$.

It is immediate from this description that the underlying category
(the category of ``colours'') of
$\Elu_M$ is a groupoid equivalent to (the fundamental infinity
groupoid of) the space naturally formed by its objects.

Consider the obvious morphism $\Disk(M)\to\Elu_M$ of multicategories.

\begin{theorem}[Lurie, Theorem 5.2.4.9 of \cite{higher-alg}]
\label{thm:lurie-multilocalization}
Restriction through the morphism $\Disk(M)\to\Elu_M$ induces a fully
faithful functor between the categories of algebras
on these multicategories.
The essential image of the functor consists precisely of the locally
constant algebras on $M$.
\end{theorem}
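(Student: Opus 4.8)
The plan is to recognise $\Elu_M$ as the localisation of the symmetric multicategory $\Disk(M)$ at the set $W$ of \emph{all} its unary operations, i.e.\ the inclusions of one disk into another. The localisation of $\infty$-operads provides a morphism $\Disk(M)\to\Disk(M)[W^{-1}]$ with the property that, for any symmetric multicategory $\cat{A}$, restriction along it identifies $\Alg_{\Disk(M)[W^{-1}]}(\cat{A})$ with the full subcategory of $\Alg_{\Disk(M)}(\cat{A})$ spanned by the algebras carrying $W$ to equivalences---and such a restriction is in particular automatically fully faithful. By the very definition of local constancy, these are exactly the locally constant algebras on $M$. Hence the theorem reduces to showing that the obvious morphism $\Disk(M)\to\Elu_M$ exhibits $\Elu_M$ as $\Disk(M)[W^{-1}]$.

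That morphism sends $W$ to equivalences: as noted above, the underlying category of $\Elu_M$ is a groupoid, so every unary operation of $\Elu_M$---in particular the image of any member of $W$---is an equivalence. We thus obtain a morphism $\Disk(M)[W^{-1}]\to\Elu_M$ of $\infty$-operads, and it remains to prove that it is an equivalence. Since the two operads literally have the same set of objects, it suffices to show that for every finite family $\{U_i\}_{i\in S}$ of disks in $M$ and every disk $V\sub M$ the map
\[
\Map_{\Disk(M)[W^{-1}]}(\{U_i\}_{i\in S};V)\longto\Map_{\Elu_M}(\{U_i\}_{i\in S};V)
\]
is an equivalence of spaces; the case $|S|=1$ subsumes the statement about underlying categories.

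The right-hand side is computed directly. A point of it is an embedding $f\colon\coprod_{i\in S}U_i\into V$ together with, for each $i$, an isotopy inside $M$ from the defining inclusion $U_i\into M$ to $f|_{U_i}$; since the space of paths in $\Emb(U_i,M)$ issuing from a fixed point is contractible, this mapping space is equivalent to the homotopy fibre, over the point given by the defining inclusions, of the restriction map $\Emb\bigl(\coprod_{i\in S}U_i,\,V\bigr)\to\prod_{i\in S}\Emb(U_i,M)$. For the left-hand side one presents the localisation concretely; the geometric input here is that the poset of open disks of $M$ ordered by inclusion is a model for the topological space of open disks of $M$ (which can be proved by a standard open-cover argument), so that both mapping spaces get expressed through configuration-type spaces of disks, and the isotopy extension theorem then identifies them. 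Matching these two descriptions is the main obstacle, and is essentially the whole geometric content of the theorem; everything surrounding it is formal.
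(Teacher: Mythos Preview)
The paper does not supply its own proof of this theorem; it is cited from Lurie, and later (Section~\ref{sec:higher-target-category}) the paper reviews the structure of Lurie's argument: the morphism $\Disk(M)\to\Elu_M$ is shown to be a \emph{weak approximation} inducing an equivalence on classifying spaces of colours (Lemma~\ref{lem:assumptions-satisfied}), and then the general result on weak approximations (Theorem~\ref{thm:lurie-approximation}, a case of \cite[Theorem~2.3.3.23]{higher-alg}) delivers the conclusion directly.

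Your framing---exhibit $\Elu_M$ as the operadic localisation $\Disk(M)[W^{-1}]$---is a correct reformulation of the statement, but your proposal does not actually prove it. The step you flag as ``the main obstacle'' is the whole proof, and you have not proposed a method for it. Multimapping spaces in a localised $\infty$-operad have no simple formula: unlike for plain $\infty$-categories, there is no hammock-style calculus that lets you write down $\Map_{\Disk(M)[W^{-1}]}(\{U_i\}_{i\in S};V)$ from $\Disk(M)$ and $W$ alone. Your sentence ``one presents the localisation concretely'' glosses over exactly this. The geometric facts you invoke (the poset of disks models the space of disks; configuration spaces; isotopy extension) are indeed the inputs that go into verifying Lurie's weak-approximation hypotheses for $\Disk(M)\to\Elu_M$, but they do not by themselves yield a computation of the localised multimapping spaces. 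Lurie's weak-approximation theorem is precisely the piece of technology that converts those geometric inputs into the desired equivalence without ever computing the localisation independently; your proposal has rediscovered the need for such a device but not supplied one.
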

In particular, a locally constant algebra on $M$ extends uniquely (up
to a contractible space) to an algebra on $\Elu_M$.

The property of an algebra on disks that it extends to $\Elu_M$, can
be understood as isotopy invariance (where the way to be invariant can
be specified functorially) of the functor.
By the above theorem, this property is equivalent to being locally
constant.

\subsubsection{}
Let $\ccat{D}(M)$ be as defined by Lurie (Definition 5.3.2.11 of
\cite{higher-alg}).
Its objects are open submanifolds of $M$ which are homeomorphic to a
finite disjoint union of disks.
The space of maps $U\to V$ is the space formed by embeddings $f\colon
U\into V$ together with an isotopy from the defining
inclusion $U\into M$ to $f\colon U\into M$.

Disjoint union in $M$ cannot be made into a partial monoidal structure
on $\ccat{D}(M)$ since the isotopies we used in defining a morphism in
$\ccat{D}(M)$, was required to be isotopies on the whole $U$, not just
on each of its components.
However, $\ccat{D}(M)$ can be extended to a symmetric partial monoidal
category which has the same objects but where the mentioned
restriction on the maps is discarded.
Let us denote this partial monoidal category by $\parmo\Elu_M$.
The composite $\Elu_M\to\ccat{D}(M)\to\parmo\Elu_M$ then has a
canonical structure as a map of multicategories, and we can try to
extend $A$ to a symmetric monoidal functor on $\parmo\Elu_M$.

To see that this is possible, let us further try discarding the
restriction on the \emph{objects}.
Namely, an object of $\parmo\Elu_M$ is an object of $\ccat{D}(M)$,
which can be considered as a disjoint family of disks in $M$, but we
can instead include \emph{any} family of disks (and define morphisms
in the same way as in $\parmo\Elu_M$).
The result is the symmetric monoidal category freely generated from
$\Elu_M$.
Therefore, an algebra $A$ on $\Elu_M$ can be extended to a symmetric
monoidal functor on the free symmetric monoidal category, and then be
restricted to $\parmo\Elu_M$ through the symmetric monoidal inclusion.
This symmetric monoidal functor on $\parmo\Elu_M$, as an algebra on a
multicategory, extends the algebra $A$ on $\Elu_M$.

Moreover, there is a commutative square
\[\begin{CD}
\Disk(M)@>>>\Elu_M\\
@VVV@VVV\\
\Dis(M)@>>>\ccat{D}(M)
\end{CD}\]
which, with the functor $\ccat{D}(M)\to\parmo\Elu_M$, factorizes a
square
\[\begin{CD}
\Disk(M)@>>>\Elu_M\\
@VVV@VVV\\
\Dis(M)@>>>\parmo\Elu_M.
\end{CD}\]
where the bottom functor underlies a symmetric monoidal functor.
It follows that, by restricting to $\ccat{D}(M)$ (the underlying
functor of) the described symmetric monoidal functor on $\parmo\Elu_M$
extending $A$, one gets a functor on $\ccat{D}(M)$ which extends both
(the underlying functor of) $A$ on $\Elu_M$, and (the underlying
functor of) the symmetric monoidal functor on $\Dis(M)$ uniquely
extended from the algebra $A\resto{\Disk(M)}$ on $\Disk(M)$.

\begin{proposition}[Lurie, Proposition 5.3.2.13 (1) of \cite{higher-alg}]
\label{prop:cofinality-of-inversion}
The functor $\Dis(M)\to\ccat{D}(M)$ is cofinal.
\end{proposition}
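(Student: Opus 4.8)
The statement to prove is Proposition \ref{prop:cofinality-of-inversion}: the functor $\Dis(M)\to\ccat{D}(M)$ is cofinal. Recall that $\Dis(M)$ is the poset of open submanifolds of $M$ homeomorphic to a finite disjoint union of disks (with inclusions as morphisms), while $\ccat{D}(M)$ has the same objects but a morphism $U\to V$ is an embedding $f\colon U\into V$ together with a path of isotopies from the defining inclusion $U\into M$ to $f$. By the cofinality criterion (Joyal--Lurie), it suffices to show that for every object $V\in\ccat{D}(M)$, the comma category $\Dis(M)_{V/}:=\Dis(M)\times_{\ccat{D}(M)}\ccat{D}(M)_{V/}$ has weakly contractible classifying space. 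An object of this comma category is a genuine open submanifold $U\sub M$ homeomorphic to a finite disjoint union of disks, equipped with a morphism $V\to U$ in $\ccat{D}(M)$ — that is, an isotopy embedding of $V$ into $U$.

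The plan is as follows. First I would reduce to showing the relevant comma category is filtered, or at least has a contractible nerve, by exhibiting a cofinal diagram inside it. The key geometric input is that any compact subset of $M$ (in particular, the closure of the image of any isotopy embedding we care about, and in particular the closure of $V$ itself) can be engulfed in an open submanifold homeomorphic to a disjoint union of disks: one picks a finite collection of coordinate charts covering the compact set and, on the connected components of $V$, takes ``tubes'' around the components that are carried by an ambient isotopy. More precisely, for the object $V\in\ccat{D}(M)$ — which is genuinely a disjoint union of disks sitting in $M$ — the identity map $V\to V$ already gives a canonical object of $\Dis(M)_{V/}$, namely $(V,\id_V)$. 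So the comma category is non-empty and has an obvious candidate cone point. The substance is to show that $(V,\id_V)$ (or the full subcategory on such ``tube neighborhoods'') is cofinal, equivalently that every object $(U, V\to U)$ of the comma category admits a map to some object lying in a contractible part of the category.

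The main technical step — and the expected main obstacle — is an \emph{isotopy extension} argument: given an object $(U, f\colon V\to U)$ of the comma category, where $f$ is an embedding-with-isotopy, I would use the isotopy extension theorem to extend the given isotopy of $V$ inside $M$ to an ambient isotopy of $M$, thereby producing a commutative triangle relating $(U, f)$ to $(V,\id_V)$ up to the $\ccat{D}(M)$-morphisms recorded in the data. The care needed is twofold: (i) the isotopies defining morphisms in $\ccat{D}(M)$ are isotopies on all of $U$ at once, so one must keep track of the whole family coherently, not component-by-component — this is exactly the difference with $\parmo\Elu_M$ flagged in the text; and (ii) one must verify that the resulting simplicial/topological structure on the space of such engulfments is contractible, for which the natural route is to show the comma category is \emph{sifted} (indeed filtered), since nested tube neighborhoods of a fixed compact set, ordered by inclusion, form a directed system with contractible realization. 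I would assemble this from: non-emptiness (via $\id_V$), the engulfing lemma (any two objects map into a common larger disjoint-union-of-disks neighborhood), and isotopy extension to make the requisite diagrams commute in $\ccat{D}(M)$. Since this is Lurie's Proposition 5.3.2.13(1), I would in practice cite \cite{higher-alg} for the details of the engulfing and isotopy-extension bookkeeping, and present here only the reduction to the cofinality criterion together with the geometric idea just sketched.
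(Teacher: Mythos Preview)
The paper does not supply its own proof of this proposition; it is stated with attribution to Lurie's \emph{Higher Algebra}, Proposition 5.3.2.13(1), and no argument is given. Your closing suggestion---to cite \cite{higher-alg} for the details---is therefore exactly what the paper does.

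That said, the geometric sketch you outline before deferring to Lurie contains a genuine gap. Your ``engulfing lemma''---that any two objects of the comma category $\Dis(M)_{V/}$ map to a common larger object---is false in general, so the category is not filtered. A morphism $(U_1,\alpha_1)\to(U_3,\alpha_3)$ in this comma category requires a morphism $U_1\to U_3$ in $\Dis(M)$, which is a literal inclusion $U_1\subset U_3$ of subsets of $M$. Take $M=S^1$, let $V$ be a short arc, and let $U_1,U_2$ be two arcs each containing $V$ (with the trivial isotopy) whose union is all of $S^1$; then any $U_3\in\Dis(S^1)$ containing both $U_1$ and $U_2$ would have to be all of $S^1$, which is impossible. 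So $\Dis(M)_{V/}$ admits no common upper bound for these two objects, and your plan to deduce contractibility from filteredness cannot succeed. For the same reason, the object $(V,\id_V)$ is neither initial nor terminal: a morphism $(V,\id_V)\to(U,\alpha)$ would require $V\subset U$ as subsets of $M$, and a morphism in the other direction would require $U\subset V$; neither holds in general, since the embedding recorded by $\alpha$ may carry $V$ entirely off itself. The isotopy extension theorem does not repair this, because extending the isotopy to an ambient diffeotopy produces a \emph{different} subset $\Phi_1^{-1}(U)\subset M$, not an inclusion between $U$ and $V$ in $\Dis(M)$.

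The contractibility of $\Dis(M)_{V/}$ is genuine, but the argument is more delicate than a directedness claim; compare the related but distinct arguments the paper does give for Proposition~\ref{prop:van-kampen-cofinality} and Lemma~\ref{lem:cofinality-with-label}, which pass through configuration spaces and the generalized Seifert--van Kampen theorem rather than through filteredness. If you wish to present an argument rather than a citation, you will need to replace the filteredness step entirely.
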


That is, for a functor defined on $\ccat{D}(M)$, its colimit over
$\ccat{D}(M)$ gives the colimit of the restriction of the same functor
to $\Dis(M)$.

\begin{proposition}[Lurie, Proposition 5.3.2.15 of \cite{higher-alg}]
\label{prop:lurie-siftedness}
The category $\ccat{D}(M)$ is sifted.
\end{proposition}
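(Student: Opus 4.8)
The plan is to prove that $\ccat{D}(M)$ is sifted by verifying the standard criterion: $\ccat{D}(M)$ is nonempty, and the diagonal functor $\ccat{D}(M) \to \ccat{D}(M) \times \ccat{D}(M)$ is cofinal. Nonemptiness is clear, since $M$ always contains at least the empty union of disks (or, if $M$ is nonempty, an actual disk). So the content is cofinality of the diagonal, which by Joyal's version of Quillen's Theorem A amounts to showing that for every pair of objects $U, V \in \ccat{D}(M)$, the comma category $\ccat{D}(M)_{(U,V)/}$ — whose objects are triples $(W, U \to W, V \to W)$ with $W$ a finite disjoint union of disks in $M$ and the two arrows being morphisms of $\ccat{D}(M)$ (embeddings-with-isotopy-to-the-inclusion) — has contractible classifying space.

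First I would unwind what a morphism in $\ccat{D}(M)$ actually is: a map $U \to W$ is a point in the space of pairs $(f, h)$ where $f\colon U \into W$ is an embedding and $h$ is an isotopy (through embeddings into $M$) from the defining inclusion $U \into M$ to $f$ composed with $W \into M$. Thus a morphism in $\ccat{D}(M)$ is really an ``isotopy-constrained'' embedding, and the data of an object of the comma category is a finite disjoint union of disks $W \subset M$ together with two such constrained embeddings of $U$ and of $V$ into $W$. Since the isotopies are taken in $M$ and $W$ can be enlarged, the key geometric fact to exploit is that any finite collection of disks in $M$ can, after an ambient isotopy, be shrunk to be pairwise disjoint and contained in any prescribed open neighborhood — this is where the manifold structure enters. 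I would use this to show the comma category is ``filtered-like'': given any two objects $(W_1, \ldots)$ and $(W_2, \ldots)$, one can find a common $W_3$ receiving compatible maps, by isotoping $W_1 \cup W_2$ (or rather a disjoint-union refinement of it) into a single finite disjoint union of disks, and the isotopy data patches up because everything is pinned to the inclusions into $M$.

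Concretely, the cleanest route is probably to produce an explicit cofinal subcategory or a homotopy-final functor into the comma category, rather than checking connectedness of the whole nerve by hand. One natural move: fix a countable exhaustion or a suitable cofinal poset of ``configurations'' of disjoint disks in $M$ that absorb both $U$ and $V$ up to isotopy, and show the inclusion of this poset into $\ccat{D}(M)_{(U,V)/}$ is cofinal; then contractibility of the classifying space reduces to that poset being filtered (directed), which follows from the shrinking/engulfing argument. Alternatively, invoke Proposition \ref{prop:cofinality-of-inversion}: since $\Dis(M) \to \ccat{D}(M)$ is cofinal, one can compute colimits, and hence test siftedness, after pulling back along this functor — but note $\Dis(M)$ itself is not sifted (disjoint-union-of-disks posets need not be directed as posets), so the isotopies genuinely do work for us here, turning a non-sifted indexing poset into a sifted category; I would emphasize that it is precisely the extra morphisms coming from isotopies in $M$ that make the diagonal cofinal.

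The main obstacle I expect is the homotopical bookkeeping of the isotopies: the morphism spaces in $\ccat{D}(M)$ are honest spaces (with higher homotopy), so ``contractible classifying space'' of the comma category is not just a connectedness statement but requires controlling all higher homotopy, and the composition of the constrained embeddings (concatenating isotopies) has the usual associativity-up-to-homotopy subtleties. The way I would handle this is to reduce, via an engulfing/isotopy-extension argument, to a cofinal sub-diagram on which the relevant spaces of embeddings-with-isotopy are contractible (e.g. because they deformation-retract onto a space of standard inclusions), so that the comma category becomes equivalent to a filtered \emph{poset}, whose classifying space is then contractible for the trivial reason that it is filtered. Making that reduction precise — identifying the right cofinal sub-diagram and checking the contractibility of the constrained-embedding spaces uniformly — is the technical heart of the argument; everything else is formal manipulation with Theorem A and the definitions recalled above.
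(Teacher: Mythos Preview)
The paper does not supply its own proof of this proposition: it is stated purely as a citation to Lurie's \emph{Higher Algebra}, Proposition~5.3.2.15, with no argument given. So there is nothing in the paper itself to compare your proposal against.

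For what it is worth, your outline has the correct overall shape and matches Lurie's strategy: one checks siftedness by showing the diagonal $\ccat{D}(M)\to\ccat{D}(M)\times\ccat{D}(M)$ is cofinal, which via Theorem~A reduces to contractibility of the classifying space of $\ccat{D}(M)_{(U,V)/}$ for each pair $U,V$. Where your plan diverges from Lurie's execution is in how that contractibility is established. Lurie does not pass to a cofinal filtered sub-poset; rather, he analyzes the comma category directly by identifying its classifying space with (a bundle over) a configuration-type space and invoking the contractibility of spaces of embeddings of disks into a disk relative to a prescribed isotopy class --- an argument in the same spirit as the paper's own proof of Proposition~\ref{prop:van-kampen-cofinality}. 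Your proposed reduction to a filtered poset is not unreasonable, but as you yourself acknowledge, the step of exhibiting such a cofinal sub-poset and verifying uniform contractibility of the constrained-embedding spaces is left open, and that is precisely where all the content lies. So your plan is on the right track but stops before the actual argument; the missing ingredient is the configuration-space/isotopy-extension analysis that makes the contractibility transparent without needing to manufacture an auxiliary filtered diagram.
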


\begin{corollary}\label{cor:isotopy-invariance}
Let $A$ be a locally constant algebra on $M$.
Consider it as an algebra on $\Elu_M$, and then extend
its underlying functor to $\ccat{D}(M)$ in the explained way.
Denote the resulting functor on $\ccat{D}(M)$ still by $A$.
Then the canonical map
\[
A(M)\longfrom\colim_{\ccat{D}(M)}A
\]
is an equivalence.
\end{corollary}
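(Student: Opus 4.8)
The plan is to deduce \ref{cor:isotopy-invariance} formally from the two cited results of Lurie, together with the identification (made in the discussion preceding Proposition \ref{prop:cofinality-of-inversion}) of the restriction of the functor on $\ccat{D}(M)$ built there.

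First, by Proposition \ref{prop:lurie-siftedness} the category $\ccat{D}(M)$ is sifted, so, by our standing assumption that $\cat{A}$ admits sifted colimits, the colimit $\colim_{\ccat{D}(M)}A$ exists in $\cat{A}$. Recall next that the underlying functor on $\ccat{D}(M)$ constructed above restricts along $\Dis(M)\to\ccat{D}(M)$ to (the underlying functor of) the symmetric monoidal functor $\Dis(M)\to\cat{A}$ canonically extending the algebra $A\resto{\Disk(M)}$; write $\bar A$ for this functor on $\Dis(M)$. Since $\Dis(M)\to\ccat{D}(M)$ is cofinal (Proposition \ref{prop:cofinality-of-inversion}), the colimit $\colim_{\Dis(M)}\bar A$ exists as well and the comparison map
\[
\colim_{\Dis(M)}\bar A\longto\colim_{\ccat{D}(M)}A
\]
is an equivalence.

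It remains to identify the left-hand side with $A(M)$ compatibly with the canonical map of the statement. By construction, the locally constant prealgebra attached to $A$ is the left Kan extension of $\bar A$ along the monoidal embedding $\Dis(M)\into\Open(M)$; since $M$ is terminal in $\Open(M)$, the relevant slice category is all of $\Dis(M)$, so evaluating this Kan extension at $M$ gives $A(M)\equivwith\colim_{\Dis(M)}\bar A$, with colimit cocone induced by the tautological inclusions $U\into M$, $U\in\Dis(M)$. One then checks that, restricted along $\Dis(M)\to\ccat{D}(M)$, the cocone on $\ccat{D}(M)$ underlying the canonical map $A(M)\longfrom\colim_{\ccat{D}(M)}A$ agrees with this cocone; combined with the previous equivalence, this shows the canonical map is an equivalence.

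I expect the only real point requiring care — the remainder being a concatenation of the cited results — to be this last compatibility of cocones. The subtlety is that there is no functor $\ccat{D}(M)\to\Open(M)$ directly (morphisms in $\ccat{D}(M)$ carry isotopy data, not merely inclusions), so the cocone on $\ccat{D}(M)$ with vertex $A(M)$ must be produced through the symmetric monoidal functor on $\parmo\Elu_M$ extending $A$, using the isotopies attached to embeddings into $M$; one has to verify that this agrees, after restriction to $\Dis(M)$, with the defining cocone of the Kan extension. Once this is pinned down at the level of the extension to $\parmo\Elu_M$, everything else is immediate from Propositions \ref{prop:cofinality-of-inversion} and \ref{prop:lurie-siftedness} and Theorem \ref{thm:lurie-multilocalization}.
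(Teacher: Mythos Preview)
Your proposal is correct and matches the paper's (implicit) argument: the corollary is stated without proof, as an immediate consequence of Propositions \ref{prop:cofinality-of-inversion} and \ref{prop:lurie-siftedness} together with the identification of $A(M)$ as $\colim_{\Dis(M)}\bar A$ via the Kan extension definition. Your concern about the compatibility of cocones is overcautious: the paper does not specify the ``canonical map'' beyond what the construction forces, so the map you produce via cofinality \emph{is} the canonical map, and there is nothing further to verify.
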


\subsubsection{}
We can now start a proof of Theorem \ref{thm:factorizing-seifert-vk}.
Recall that a functor $\cat{C}\to\cat{D}$ is \emph{cofinal} if for
every functor $f$ with domain $\cat{D}$, $\colim f$ (when this
exists) is a colimit of $f$ over $\cat{C}$ (in the canonical way).
(Lurie \cite{topos} Definition 4.1.1.1, but see also Proposition
4.1.1.8.)

\begin{definition}
Let $\cover{U}$ be a cover of a manifold $M$, given by a functor
$\chi\colon\cat{C}\to\Open(M)$, $i\mapsto U_i$.
Then $\cover{U}$ is said to be \kore{effectively factorizing l-nice}
if the canonical functor
\[
\colim_i\ccat{D}(U_i)\longto\ccat{D}(M)
\]
is cofinal.
\end{definition}

\begin{remark}\label{rem:van-kampen-cofinality}
By Proposition \ref{prop:cofinality-of-inversion}, the condition of
being an effectively factorizing l-nice cover is equivalent to that
the functor
\[
\colim_i\Dis(U_i)\longto\ccat{D}(M)
\]
is cofinal.
\end{remark}

Theorem \ref{thm:lurie-multilocalization} immediately implies the
following.

\begin{lemma}
\label{lem:factorizing-seifert-vk}
Let $A$ be a locally constant algebra on $M$.
Then for any effectively factorizing l-nice cover determined by
$\chi\colon\cat{C}\to\Open(M)$, the canonical map
$A(M)\from\colim_\cat{C}A\chi$ is an equivalence.
\end{lemma}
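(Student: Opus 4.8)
The plan is to reduce the statement to the isotopy--invariance colimit formula of Corollary \ref{cor:isotopy-invariance}, applied once to $M$ and once to each member $U_i$ of the cover, and then to feed in the cofinality that is built into the hypothesis. Under our standing assumptions on $\cat{A}$, a locally constant algebra $A$ on $M$ is identified with the prealgebra on $\Open(M)$ which is the left Kan extension of $A\resto{\Dis(M)}$; consequently each restriction $A\resto{U_i}$ is again a locally constant algebra, identified with the left Kan extension of $A\resto{\Dis(U_i)}$, and the value of $A\chi$ at $i$ is $A(U_i)$, the value of this prealgebra at $U_i\in\Open(U_i)$. Corollary \ref{cor:isotopy-invariance} applied to $U_i$ then gives $A(U_i)\equivwith\colim_{\ccat{D}(U_i)}A$, where $A$ now denotes the functor on $\ccat{D}(U_i)$ obtained by extending $A\resto{U_i}$ as in Section \ref{sec:isotopy-invariance}; the same corollary applied to $M$ gives $A(M)\equivwith\colim_{\ccat{D}(M)}A$.

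Next I would assemble the fibrewise colimits. In the convention of Section \ref{sec:terminology-notation}, $\colim_i\ccat{D}(U_i)$ is the Grothendieck construction of the diagram $i\mapsto\ccat{D}(U_i)$, and the canonical functor $\colim_i\ccat{D}(U_i)\to\ccat{D}(M)$ restricts over the object $i$ to $\ccat{D}(U_i)\to\ccat{D}(M)$. Using the naturality of the extension procedure of Section \ref{sec:isotopy-invariance} --- which rests on the uniqueness clause of Theorem \ref{thm:lurie-multilocalization} applied to $U_i$ --- one checks that pulling back the functor $A$ on $\ccat{D}(M)$ along this fibre inclusion recovers the extension of $A\resto{U_i}$ on $\ccat{D}(U_i)$. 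Hence the extended functors on the $\ccat{D}(U_i)$ are the fibres of the pullback of $A$ along $\colim_i\ccat{D}(U_i)\to\ccat{D}(M)$, and the standard Fubini identity for a colimit over a Grothendieck construction yields
\[
\colim_{i\in\cat{C}}A(U_i)\;\equivwith\;\colim_{i\in\cat{C}}\colim_{\ccat{D}(U_i)}A\;\equivwith\;\colim_{\colim_i\ccat{D}(U_i)}A .
\]

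Finally, because the cover is effectively factorizing l-nice, the functor $\colim_i\ccat{D}(U_i)\to\ccat{D}(M)$ is cofinal, so the right-hand colimit is $\colim_{\ccat{D}(M)}A\equivwith A(M)$; tracing through the identifications shows that the resulting equivalence is the canonical map $\colim_\cat{C}A\chi\to A(M)$. I expect the only delicate point to be the coherence in the middle paragraph: checking that the extensions of the various $A\resto{U_i}$ to the $\ccat{D}(U_i)$ fit together into the restriction of the single functor $A$ on $\ccat{D}(M)$, so that the Fubini identity and the cofinality hypothesis genuinely apply to the same diagram. Once this bookkeeping is in place the rest is formal; alternatively, one can run the whole argument with $\Dis$ in place of $\ccat{D}$ --- using Remark \ref{rem:van-kampen-cofinality} to translate the hypothesis into cofinality of $\colim_i\Dis(U_i)\to\ccat{D}(M)$ --- which makes the coherence essentially automatic, since it then only concerns a fixed functor on a poset.
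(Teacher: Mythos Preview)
Your proposal is correct and follows the same route the paper has in mind: the paper simply says that the lemma is immediate from Theorem~\ref{thm:lurie-multilocalization} (via Corollary~\ref{cor:isotopy-invariance}), and your argument spells out precisely this---rewriting both $A(M)$ and each $A(U_i)$ as colimits over $\ccat{D}$, assembling via Fubini, and invoking the cofinality in the definition of ``effectively factorizing l-nice''. Your closing remark, that running the argument with $\Dis$ in place of $\ccat{D}$ (via Remark~\ref{rem:van-kampen-cofinality}) makes the coherence automatic because everything is then the restriction of a single functor on a poset, is exactly the clean way to handle the only nontrivial bookkeeping you identify.
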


Theorem \ref{thm:factorizing-seifert-vk} is an immediate
consequence of this and the
following, `factorizing' version of Lurie's higher homotopical
generalization of the Seifert--van Kampen theorem.
The factorizing version is actually a consequence of the original
theorem.
Our proof will be similar to the proof of Theorem 5.1 of the paper
\cite{bw} by Boavida de Brito--Weiss, and will also use some arguments
similar to those from the proofs of the theorems above of Lurie.

\begin{proposition}\label{prop:van-kampen-cofinality}
Let $M$ be a manifold.
Then every factorizing l-nice cover of $M$ is effectively factorizing
l-nice.
\end{proposition}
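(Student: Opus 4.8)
The plan is to show that the canonical functor $\colim_i\ccat{D}(U_i)\to\ccat{D}(M)$ is cofinal by verifying the criterion of Theorem 4.1.3.1 of Lurie's \cite{topos}: a functor $p\colon\cat{X}\to\cat{Y}$ is cofinal if and only if for every object $V$ of $\cat{Y}$ the comma category $\cat{X}_{V/}$ (fibre of $p$ under $V$, in the appropriate slice sense) has contractible classifying space. Here an object of the source lax colimit over $i\in\cat{C}$ is a pair $(i,\,W)$ with $W$ an object of $\ccat{D}(U_i)$, i.e., a finite disjoint union of disks $W\subset U_i$ together with the isotopy data that makes it a morphism-object of $\ccat{D}(U_i)$; the functor to $\ccat{D}(M)$ sends it to $W$ viewed inside $M$, using the inclusion $U_i\into M$ and the evident transport of isotopies. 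Fixing $V\in\ccat{D}(M)$, I would identify the relevant comma category with the category whose objects are triples consisting of $i\in\cat{C}$, an object $W\in\ccat{D}(U_i)$, and a morphism $W\to V$ in $\ccat{D}(M)$ (isotopy of $W$ inside $M$ ending at an embedding into $V$), and show its classifying space is contractible.

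The key reduction is that, after composing with an isotopy, one may assume $W$ actually sits inside $V$ and the chosen morphism is (isotopic to) the inclusion; combined with Remark \ref{rem:van-kampen-cofinality} and Proposition \ref{prop:cofinality-of-inversion}, one can replace $\ccat{D}$ by $\Dis$ throughout, so that the isotopy data becomes harmless and the comma category deformation-retracts onto the honest poset of pairs $(i,\,W)$ with $W\in\Dis(U_i)$ and $W\subset V$ (equivalently $W\subset U_i\cap V$), where $\cat{C}$ contributes via the over-category structure. This is where the hypothesis enters: $V$ is a disjoint union of finitely many disks, so picking a point $x_c$ in each component $c$ of $V$ gives a finite subset $x\subset M$, and $x\subset V$. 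The factorizing l-nice condition says $\cat{C}_x=\{i\mid x\subset U_i\}$ has contractible classifying space. I would then argue — following the style of Lurie's proof of Proposition 5.3.2.13 and the arguments of Boavida de Brito--Weiss \cite{bw}, Theorem 5.1 — that the poset of such pairs $(i,\,W)$ is filtered-in-spirit over $\cat{C}_x$: given $i$ with $x\subset U_i$, one can always find a disjoint union of small disks $W$, one around each point of $x$, with $W\subset U_i\cap V$, and any two such choices are connected through a zig-zag by shrinking, so the forgetful functor to $\cat{C}_x$ is cofinal (its fibres are posets of small-disk-neighbourhoods of $x$ inside a fixed open set, which are filtered hence contractible). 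Contractibility of the classifying space of the comma category then follows from contractibility of $|\cat{C}_x|$, giving cofinality.

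The main obstacle I anticipate is the bookkeeping of the isotopy data in $\ccat{D}$: a morphism in $\ccat{D}(M)$ carries an isotopy of the \emph{whole} source, not componentwise, so the "shrink each disk of $W$ around its marked point" move must be realized by a genuine ambient isotopy compatible with all the structure, and the identification of the comma category with something governed by $\cat{C}_x$ must be done at the level of classifying spaces rather than naively on nerves. The cleanest route is probably to do the contractibility argument first for the $\Dis$-version (where disjoint union is a partial monoidal structure and the combinatorics are transparent), deduce cofinality of $\colim_i\Dis(U_i)\to\ccat{D}(M)$, and then invoke Remark \ref{rem:van-kampen-cofinality} to pass back to the $\ccat{D}$-version — so that the isotopy subtleties are quarantined inside the already-cited Propositions \ref{prop:cofinality-of-inversion} and \ref{prop:lurie-siftedness} rather than re-derived here.
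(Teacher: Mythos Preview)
Your strategy diverges from the paper's and has a real gap. The paper does not attack the comma categories over each $V\in\ccat{D}(M)$ directly; instead it uses the description $\ccat{D}(U)=\laxcol_{D\in\ccat{D}}\Emb(D,U)$, with $\ccat{D}$ the category of \emph{abstract} finite disjoint unions of $n$-disks, to rewrite the functor in question as a lax colimit over $D\in\ccat{D}$ of functors $\laxcol_{i\in\cat{C}}\Emb(D,U_i)\to\Emb(D,M)$. Each target here is a groupoid, so Lemma~\ref{lem:standard-cofinality} reduces cofinality to the map $\colim_i\Emb(D,U_i)\to\Emb(D,M)$ being a weak equivalence of spaces; after the standard reduction of embedding spaces to configuration spaces this becomes $\colim_i\Conf(S,U_i)\to\Conf(S,M)$, which is exactly an instance of Lurie's generalized Seifert--van~Kampen theorem. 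Its hypothesis at a point $x\in\Conf(S,M)$ is precisely that $\cat{C}_{\supp x}$ be contractible.

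Your two key reductions both break. First, the ``deformation retraction'' onto pairs $(i,W)$ with $W\subset U_i\cap V$ does not go through: the isotopy carried by a morphism $W\to V$ of $\ccat{D}(M)$ runs through embeddings into $M$, not into $U_i$, so sliding $W$ along it will in general leave $U_i$; Remark~\ref{rem:van-kampen-cofinality} lets you replace $\ccat{D}$ by $\Dis$ in the \emph{source}, but the comma is still taken in $\ccat{D}(M)$, so the morphism space $\Map_{\ccat{D}(M)}(W,V)$ (a homotopy fibre of $\Emb(W,V)\to\Emb(W,M)$) does not collapse to a point. Second, even granting your poset, the forgetful map does not land in $\cat{C}_x$ for your chosen $x$: an object $(i,W)$ with $W$ meeting only some of the components of $V$ need not satisfy $x\subset U_i$, and the fibres are not posets of neighbourhoods of $x$. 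The factorizing l-nice condition is a hypothesis over \emph{all} finite subsets of $M$, and the paper's proof consumes it exactly that way---once for every configuration in every $\Conf(S,M)$---not for a single $x$ attached to a fixed $V$.
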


In the proof, we shall use the following standard fact from basic
homotopy theory.
Its proof is included for completeness.

\begin{lemma}\label{lem:standard-cofinality}
Let $\cat{G}$ be a groupoid.
Then a functor $\cat{C}\to\cat{G}$ from a $1$-category is cofinal if
(and only if) the induced map $B\cat{C}\to\cat{G}$ is an equivalence.
\end{lemma}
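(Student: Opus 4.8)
The plan is to derive this from the $\infty$-categorical version of Quillen's Theorem~A. By that theorem (\cite[Theorem 4.1.3.1]{topos}), the functor $F\colon\cat{C}\to\cat{G}$ is cofinal if and only if, for every object $g\in\cat{G}$, the category $\cat{C}\times_\cat{G}\cat{G}_{g/}$ has contractible classifying space. So the whole argument reduces to identifying, naturally in $g$, the classifying space $B(\cat{C}\times_\cat{G}\cat{G}_{g/})$ with the homotopy fibre over $g$ of the induced map $\widebar{F}\colon B\cat{C}\to B\cat{G}\equivwith\cat{G}$. Granting this, $F$ is cofinal exactly when every homotopy fibre of $\widebar{F}$ is contractible; and a map of groupoids, equivalently of spaces, has all homotopy fibres contractible precisely when it is an equivalence, so $F$ is cofinal exactly when $\widebar{F}$ — which is the map $B\cat{C}\to\cat{G}$ of the statement — is an equivalence.

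To establish the identification, I would argue as follows. The projection $\cat{G}_{g/}\to\cat{G}$ is a left fibration; its total space is contractible because $\id_g$ is an initial object, and its straightening is the corepresentable functor $\Map_\cat{G}(g,\blank)\colon\cat{G}\to\Space$. As $\cat{G}$ is a groupoid, $\Map_\cat{G}(g,y)$ is the space of paths from $g$ to $y$, so $\cat{G}_{g/}\to\cat{G}$ is the path fibration of $\cat{G}$ based at $g$. Pulling back along $F$, the projection $\cat{C}\times_\cat{G}\cat{G}_{g/}\to\cat{C}$ is the left fibration with straightening $c\mapsto\Map_\cat{G}(g,F(c))$, so — using that the classifying space of a left fibration is the colimit of its straightening — its classifying space is $\colim_{c\in\cat{C}}\Map_\cat{G}(g,F(c))$. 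Since this straightening carries every morphism of $\cat{C}$ to an equivalence (the image of any morphism of $\cat{C}$ is invertible in the groupoid $\cat{G}$), it factors through the localization $\cat{C}\to B\cat{C}$, and the colimit is computed equally over $B\cat{C}$: $\colim_{c\in\cat{C}}\Map_\cat{G}(g,F(c))\equivwith\colim_{x\in B\cat{C}}\Map_\cat{G}(g,\widebar{F}(x))$. Finally, a colimit of a space-valued functor indexed by a space is the total space of the associated fibration, and the fibration over $B\cat{C}$ with fibre $\Map_\cat{G}(g,\widebar{F}(x))$ at $x$ is the pullback along $\widebar{F}$ of the path fibration of $\cat{G}$ based at $g$; since the latter has contractible total space, that pullback is the homotopy fibre of $\widebar{F}$ over $g$, as wanted.

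The point needing the most care is the passage $\colim_{\cat{C}}\equivwith\colim_{B\cat{C}}$ for functors pulled back from $B\cat{C}$; this rests on two standard facts — restriction along a localization is fully faithful on functor $\infty$-categories, so left Kan extension back along it is inverse up to equivalence, and taking colimits commutes with left Kan extension — while the remaining points are routine manipulations with (un)straightening. An alternative that bypasses $B\cat{C}$ altogether is to factor $F$ as a cofinal map $\cat{C}\to\cat{C}'$ followed by a left fibration $p\colon\cat{C}'\to\cat{G}$ (a fibrant replacement in the covariant model structure on simplicial sets over $\cat{G}$): as $\cat{G}$ is a Kan complex, $p$ is then a Kan fibration and $\cat{C}'$ a Kan complex, and $\cat{C}\to\cat{C}'$ is in addition a weak homotopy equivalence exhibiting $\cat{C}'\equivwith B\cat{C}$; hence $F$ is cofinal iff $p$ is, iff the left fibration $p$ has contractible fibres, iff $p$ — being a Kan fibration — is a weak equivalence, iff $B\cat{C}\to\cat{G}$ is an equivalence.
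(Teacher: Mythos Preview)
Your proof is correct and matches the paper's \emph{second} proof, which the paper presents as an alternative: apply Joyal's Theorem~A and identify the classifying space of the comma category $\cat{C}_{g/}$ with the homotopy fibre of $B\cat{C}\to\cat{G}$ over $g$. The paper's version of that identification is shorter than yours: since $\cat{G}$ is a groupoid, $\cat{G}_{g/}\simeq *$, so $\cat{C}_{g/}=\cat{C}\times_\cat{G}\cat{G}_{g/}$ is already the strict fibre of $\cat{C}\to\cat{G}$ over $g$, and one finishes by the fact that geometric realization preserves pullbacks. Your route through (un)straightening and colimits over $B\cat{C}$ arrives at the same place with more machinery; your fibrant-replacement alternative is also correct and is not in the paper.

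The paper's \emph{primary} proof is genuinely different: it avoids Theorem~A entirely and works directly from the definition of cofinality. It reduces to space-valued diagrams, then uses that for a functor $L$ on a groupoid $\cat{G}$, the value $L(i)$ is the fibre of $\colim_\cat{G}L\to\cat{G}$, so $\lim_\cat{C}L$ (dually) is the space of sections of this map over $B\cat{C}$ --- visibly independent of $\cat{C}$ once $B\cat{C}\to\cat{G}$ is an equivalence. This buys a self-contained argument that does not invoke Theorem~A; your approach, on the other hand, handles both directions of the ``if and only if'' symmetrically and makes the role of the homotopy fibres explicit.
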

\begin{proof}\setcounter{proofsec}{0}
\proofsec
Assuming that $\cat{G}=B\cat{C}$, we want to prove that the colimit of
any functor $L$ defined over $\cat{G}$ is a colimit of $L$ over
$\cat{C}$.
(``Only if'' part is trivial since $B\cat{C}$ is a colimit of the
final diagram over $\cat{C}$ in the $1$-category of groupoids.)

Note that it suffices to consider the case where $L$ is taking values
in the opposite of the category of spaces, since whether an object is
a colimit is tested by homming to another object.
Let us conveniently change the variance of $\cat{C}$ and $\cat{G}$,
and consider the limits of a \emph{covariant} functor $L$ defined on
$\cat{G}$.
Thus, we want to prove that for $\cat{G}=B\cat{C}=\colim_\cat{C}*$,
colimit taken in the category of groupoids, the induced map
$\lim_\cat{G}L\to\lim_\cat{C}L$ is an equivalence.

The crucial fact here is that for any object $i$ of $\cat{G}$, $L(i)$
is the homotopy fibre of the projection $\colim_\cat{G}L\to\cat{G}$.
Namely, $L(i)$ is the space of sections of this map over the point
$i$.

It follows that $\lim_\cat{C}L$ is the space of global sections if
$\cat{G}=\colim_\cat{C}*$.
Thus, we have proved that $\lim_\cat{C}L$ is functorially equivalent
to a space which is independent of $\cat{C}$ as long as the map
$B\cat{C}\to\cat{G}$ is an equivalence.
(In particular, this independent space is identified with
$\lim_\cat{G}L$ through the equivalence obtained in the case
where the functor $\cat{C}\to\cat{G}$ is an equivalence.)
This completes the proof.

\proofsec
Alternatively, one can apply Joyal's generalization of Quillen's
Theorem A \cite{topos}, although as we have shown, this is not
necessary.
Again, assuming $\cat{G}=B\cat{C}$, we want to show that, for any
object $x$ of $\cat{G}$, the under category $\cat{C}_{x/}$ has
contractible classifying space.

The point is that, since $\cat{G}$ is a groupoid, $\cat{C}_{x/}$
coincides with the fibre of the functor $\cat{C}\to\cat{G}$ over $x$.
The result is immediate from this since the geometric realization
functor preserves pull-backs.
\end{proof}

\begin{proof}[Proof of Proposition \ref{prop:van-kampen-cofinality}]
Suppose that a factorizing l-nice cover $\cover{U}$ of $M$ is given by
a functor $\chi\colon\cat{C}\to\Open(M)$, $i\mapsto U_i$.
We want to show that the functor
\[
\colim_i\ccat{D}(U_i)\longto\ccat{D}(M)
\]
is cofinal.

Recall that for open $U\sub M$, the category $\ccat{D}(U)$ was a comma
category in the category $\Man$ of category of manifolds, in which the
space of morphisms is the space of open embeddings.
Namely, let $\ccat{D}$ be the full subcategory of $\Man$ of manifolds
whose objects are equivalent to disjoint union of disks of dimension
$n$, where $n=\dim M$.
Then $\ccat{D}(U)$ was the comma category whose object was a
morphism from an object of $\ccat{D}$ to $U$.

In other words, $\ccat{D}(U)=\laxcol_{D\in\ccat{D}}\Emb(D,U)$, where
$\Emb(D,U)$, the infinity groupoid of embeddings, is the space of
morphisms in $\Man$, and the lax colimit is taken in the $2$-category
of categories.

It follows that it suffices to prove for every
$D\in\ccat{D}$, the map
\begin{equation*}
\laxcol_{i\in\cat{C}}\Emb(D,U_i)\longto\Emb(D,M)
\end{equation*}
is cofinal for every $D\in\ccat{D}$.

In view of Lemma \ref{lem:standard-cofinality} above, it suffices to
prove that the map
\[
\colim_{i\in\cat{C}}\Emb(D,U_i)\longto\Emb(D,M)
\]
is an equivalence.

Choose a homeomorphism $D\equivwith S\times\R^n$ for a finite set $S$.
In particular, we have picked a point in each component of $D$,
corresponding to the origin in $\R^n$, together with a germ of chart
at the chosen points.
Then, given an embedding $D\into U$, restriction of it to the
germs of charts at the chosen points gives us an injection $S\into U$
together with germs of charts in $U$ at the image of $S$.
This defines a homotopy equivalence of $\Emb(D,U)$ with the space of
germs of charts around distinct points in $U$, labeled by $S$.

Furthermore, for any $U$, this space is fibred over the configuration
space $\Conf(S,U):=\Emb(S,U)/\Aut(S)$, with fibres equivalent to
$\mathrm{Germ}_0(\R^n)\wreath\Aut(S)$, where $\mathrm{Germ}_0(\R^n)$
is from \cite[Notation 5.2.1.9]{higher-alg}.

Thus it suffices to show that the map
\[
\colim_{i\in\cat{C}}\Conf(S,U_i)\longto\Conf(S,M)
\]
is an equivalence of spaces.

In order to prove this, Lurie's generalized Seifert--van Kampen
theorem implies that it suffices to prove that for every
$x\in\Conf(S,M)$, the category $\{i\in\cat{C}\:|\:x\in\Conf(S,U_i)\}$
has contractible classifying space.
However, $x\in\Conf(S,U_i)$ is equivalent to $\supp x\sub U_i$, where
$\supp x$ is the subset of $M$ corresponding to the configuration $x$,
so the required condition is exactly our assumption that the cover is
factorizing l-nice.
\end{proof}

\subsection{Basic descent}
\setcounter{subsubsection}{-1}

\subsubsection{}

We continue with the assumptions introduced in
Section \ref{sec:assumption-on-colimit}.
Namely, we assume that the target category
$\cat{A}$ of prealgebras has sifted colimits, and the monoidal multiplication
functors on $\cat{A}$ preserve sifted colimits (variable-wise).

\begin{definition}\label{def:factorizing-basis}
Let $M$ be a manifold, and let $\cover{U}$ be an effectively
factorizing l-nice cover of $M$, given by a functor
$\chi\colon\cat{C}\to\Open(M)$, $i\mapsto U_i$.
We say that $\cover{U}$ is an \kore{(effectively) factorizing l-nice
  basis} for the topology of $M$, if for every open $V\sub M$, the
functor $\chi\colon\cat{C}_{/V}\to\Open(M)_{/V}=\Open(V)$ determines
an (effectively) factorizing l-nice cover of $V$.
\end{definition}

\begin{remark}
There is an obvious non-factorizing version of these notions.
\end{remark}

It is immediate to see that a factorizing l-nice basis is effectively
so as well.

\begin{example}
Disjoint open disks of $M$ form a factorizing l-nice basis of $M$.
\end{example}

We have the following as a corollary of Lemma
\ref{lem:factorizing-seifert-vk}, in view of the definition of an
effectively factorizing l-nice basis.

\begin{proposition}\label{prop:kan-extension-from-basis}
Let $M$ be a manifold with an effectively factorizing l-nice basis
$\cover{U}$.
Then any factorization algebra $A$, as a functor, is a left Kan
extension of its restriction to $\cover{U}$, namely, if the basis is
given by a functor $\chi\colon\cat{C}\to\Open(M)$, then $A$ is a Kan
extension along $\chi$ of $A\chi$.
\end{proposition}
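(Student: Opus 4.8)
The plan is to reduce the statement to Lemma~\ref{lem:factorizing-seifert-vk} via the pointwise criterion for a left Kan extension. By that criterion, it suffices to show that for every open $V\sub M$ the canonical map
\[
\colim_{\cat{C}_{/V}}A\chi\longto A(V)
\]
is an equivalence; here $\cat{C}_{/V}:=\cat{C}\times_{\Open(M)}\Open(V)$ (note $\Open(M)_{/V}=\Open(V)$), and $\chi$ denotes also the induced functor $\cat{C}_{/V}\to\Open(V)$. In particular, establishing these equivalences for all $V$ simultaneously shows that the relevant colimits exist, so that the pointwise left Kan extension of $A\chi$ along $\chi$ exists and is identified with $A$.

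Fix an open $V\sub M$. First I would observe that $A$ restricts to a locally constant factorization algebra on $V$, which I denote $A\resto V$: restricting the underlying algebra of $A$ on $\Disk(M)$ along the evident inclusion of multicategories $\Disk(V)\into\Disk(M)$ yields an algebra on $\Disk(V)$, and it inverts inclusions of single disks of $V$ because $A$ inverts such inclusions in $M$. Moreover, the value of $A\resto V$ at any open $W\sub V$ equals $A(W)$: by the description of $\Alg_M(\cat{A})$ as a left Kan extension from $\Dis(M)$ one has $A(W)=\colim_{\Dis(W)}A$, and the same formula computes $(A\resto V)(W)$ since $\Dis(M)_{/W}=\Dis(W)$ (using $\dim V=\dim M$). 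Next, by the definition of an effectively factorizing l-nice basis (Definition~\ref{def:factorizing-basis}), the induced functor $\chi\colon\cat{C}_{/V}\to\Open(V)$ determines an effectively factorizing l-nice cover of $V$. Applying Lemma~\ref{lem:factorizing-seifert-vk} to the manifold $V$, the locally constant algebra $A\resto V$, and this cover, one gets that
\[
\colim_{\cat{C}_{/V}}A\chi=\colim_{\cat{C}_{/V}}(A\resto V)\chi\longto(A\resto V)(V)=A(V)
\]
is an equivalence, as desired. Since $V$ was arbitrary, this proves the proposition.

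The whole content of the argument sits inside Lemma~\ref{lem:factorizing-seifert-vk} --- thus, ultimately, inside Lurie's Theorem~\ref{thm:lurie-multilocalization} together with Proposition~\ref{prop:van-kampen-cofinality} --- while everything else is formal bookkeeping with comma categories and the pointwise Kan extension formula, so I do not expect a real obstacle here. The only step warranting a little care is the identification of $(A\resto V)(W)$ with $A(W)$ for $W\sub V$, i.e.\ the compatibility of restriction to an open submanifold with the passage to factorization homology; but this is forced by the realization of $\Alg_M(\cat{A})$ as the right localization of locally constant prealgebras given by left Kan extension from $\Dis(M)$, together with the evident identification $\Dis(M)_{/W}=\Dis(W)$.
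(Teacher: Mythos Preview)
Your argument is correct and is exactly the approach the paper takes: the paper simply states that the proposition is ``a corollary of Lemma~\ref{lem:factorizing-seifert-vk}, in view of the definition of an effectively factorizing l-nice basis,'' and you have unpacked precisely what that means via the pointwise Kan extension criterion applied to each open $V\sub M$. Your additional care in verifying that $A\resto V$ is again a locally constant factorization algebra with the same values as $A$ is a detail the paper leaves implicit, but it is straightforward and your justification is fine.
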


In fact, the converse to this is true in the following sense.

\begin{proposition}\label{prop:descent-of-algebras}
Let $M$ be a manifold with an effectively factorizing l-nice basis
$\cover{U}$.
Suppose $A$ is a prealgebra on $M$, then it is a locally
constant factorization algebra if (and only if) it satisfies the
following.
\begin{itemize}
\item For any basic (in the basis) open $U$, the conditions
\begin{enumerate}\setcounter{enumi}{-1}
\item\label{it:local-constancy} $A$ is locally constant when
  restricted to $U$,
\item\label{it:right-value-on-basis}
  the map $\colim_{\Dis(U)}A\to A(U)$ is an equivalence
\end{enumerate}
are satisfied, and
\begin{enumerate}\setcounter{enumi}{1}
\item\label{it:kan-ext-from-basis}
  the underlying functor of $A$ is a left Kan extension of its
  restriction to the basis.
\end{enumerate}
\end{itemize}
\end{proposition}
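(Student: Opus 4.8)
For the ``only if'' direction, everything is contained in the discussion preceding the statement: the restriction of a locally constant factorization algebra to any open submanifold is again a locally constant prealgebra (local constancy concerns inclusions of disks, and a disk in $U$ is a disk in $M$), which gives (0); condition (2) is precisely Proposition \ref{prop:kan-extension-from-basis}; and since $A$, viewed as a functor on $\Open(M)$, is the left Kan extension of $A\resto{\Dis(M)}$ along $\Dis(M)\into\Open(M)$, evaluating at a basic open $U$ yields $A(U)\equivwith\colim_{\Dis(M)_{/V}}A=\colim_{\Dis(U)}A$ with $V=U$, which is (1).

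So assume (0), (1), (2), and write the basis as $\chi\colon\cat{C}\to\Open(M)$, $i\mapsto U_i$. I must show that $A$ is locally constant and that, as a functor, it is the left Kan extension of $A\resto{\Dis(M)}$; together these two facts say exactly that $A$ is a locally constant factorization algebra.

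The first step, and the crux of the argument, is to prove that $A$ is locally constant, i.e.\ that it inverts every inclusion $D\into D'$ of disks in $M$. By Theorem \ref{thm:lurie-multilocalization} applied to $M$ this is equivalent to extending the algebra $A\resto{\Disk(M)}$ along $\Disk(M)\to\Elu_M$; once this is done, its underlying functor extends, as in Section \ref{sec:isotopy-invariance}, to a functor on $\ccat{D}(M)$, which I will still denote by $A$, and likewise $A\resto{V}$ extends to a functor on $\ccat{D}(V)$ for every open $V\sub M$, compatibly with the maps $\ccat{D}(V)\to\ccat{D}(M)$. Now hypothesis (0) says that this extension problem is already solved over each $U_i$ --- $A\resto{\Disk(U_i)}$ is locally constant, hence extends to $\Elu_{U_i}$ --- and by the essential uniqueness of such extensions these solutions are compatible as $i$ ranges over $\cat{C}$. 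The plan is to assemble them into the required extension over $\Elu_M$, using that the $U_i$ form a factorizing l-nice cover of $M$; this should proceed by the same mechanism --- Lurie's generalized Seifert--van Kampen theorem applied to the relevant configuration (and germ-of-chart) spaces, cf.\ the proof of Proposition \ref{prop:van-kampen-cofinality} --- and I expect reducing this global extension to the local ones to be the main obstacle of the whole argument.

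Granting local constancy, the second step is to check that $A(V)\equivwith\colim_{\Dis(V)}A$ for every open $V\sub M$, naturally in $V$; this says that $A$ is the left Kan extension of $A\resto{\Dis(M)}$, and completes the proof. Fix $V$. Since $\cover{U}$ is an effectively factorizing l-nice basis (Definition \ref{def:factorizing-basis}), its restriction $\cat{C}_{/V}\to\Open(V)$ is an effectively factorizing l-nice cover of $V$, so the functor $\colim_{i\in\cat{C}_{/V}}\ccat{D}(U_i)\to\ccat{D}(V)$ is cofinal. Then I would run the chain of equivalences
\[
\begin{aligned}
A(V)&\equivwith\colim_{i\in\cat{C}_{/V}}A(U_i)\equivwith\colim_{i\in\cat{C}_{/V}}\colim_{\Dis(U_i)}A\\
&\equivwith\colim_{i\in\cat{C}_{/V}}\colim_{\ccat{D}(U_i)}A\equivwith\colim_{\ccat{D}(V)}A\equivwith\colim_{\Dis(V)}A,
\end{aligned}
\]
where the first equivalence is (2), the second is (1) applied to each basic $U_i$, the third is Proposition \ref{prop:cofinality-of-inversion} (cofinality of $\Dis(U_i)\to\ccat{D}(U_i)$), the fourth is the identification of an iterated colimit with a colimit over the Grothendieck construction together with the cofinality of $\colim_i\ccat{D}(U_i)\to\ccat{D}(V)$ just recalled --- here it is essential that the functors on the various $\ccat{D}(U_i)$ and on $\ccat{D}(V)$ are all restrictions of the single functor on $\ccat{D}(M)$ produced in the first step, so that these colimits agree --- and the last is Proposition \ref{prop:cofinality-of-inversion} again (cofinality of $\Dis(V)\to\ccat{D}(V)$). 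Hence $A(V)\equivwith\colim_{\Dis(V)}A=\colim_{\Dis(M)_{/V}}A$, naturally in $V$, as required.
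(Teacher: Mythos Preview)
Your ``only if'' direction and your second step are fine; the chain of equivalences at the end is exactly what one wants once local constancy is known. The gap is the first step, which you yourself flag as the main obstacle and do not carry out.

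The difficulty with your plan is that what you are asking for --- that the local extensions of $A\resto{\Disk(U_i)}$ to $\Elu_{U_i}$ assemble to an extension of $A\resto{\Disk(M)}$ to $\Elu_M$ --- is a descent statement for the association $U\mapsto\Alg_{\Elu_U}(\cat{A})\equivwith\Alg_U(\cat{A})$ along the basis $\cover{U}$. In the paper that statement (Theorem \ref{thm:basic-descent-of-category-of-algebras}, and more generally Theorem \ref{thm:descent-of-categories-of-algebras}) is \emph{deduced from} the present Proposition, so invoking it here would be circular. Your suggestion to run a Seifert--van Kampen argument directly on the multimap spaces of $\Elu_M$ is plausible in spirit, but Proposition \ref{prop:van-kampen-cofinality} only gives you cofinality of $\colim_i\ccat{D}(U_i)\to\ccat{D}(M)$; turning that into a statement that $\Elu_M$ (or $\parmo\Elu_M$) is recovered from the $\Elu_{U_i}$ as a colimit of \emph{multicategories}, in a way that lets you glue algebras, is a genuine further step that you have not supplied.

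The paper avoids any such gluing. It passes to an auxiliary basis $\cat{E}:=\laxcol_{i\in\cat{C}}\Dis(U_i)$ with $\cat{E}_1:=\laxcol_{i\in\cat{C}}\Disk(U_i)$, verifies Hypothesis \ref{hy:monoidal-basis}, and applies Corollary \ref{cor:descent-of-general-algebra} (hence Theorem \ref{thm:general-notion-of-algebra}). The engine inside Theorem \ref{thm:general-notion-of-algebra} is not a gluing but a \emph{right} Kan extension: one shows, via a coinitiality argument with the labeled posets $\Dis_S(M)$ and Lemma \ref{lem:cofinality-with-label}, that the right Kan extension of $A\resto{\Dis(M)}$ along $\Dis(M)\to\ccat{D}(M)$ agrees with $A$ on the basis, hence on all of $\Dis(M)$. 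Once $A\resto{\Dis(M)}$ extends to $\ccat{D}(M)$, local constancy is automatic, and the remaining argument is essentially your second step.
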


\begin{theorem}\label{thm:basic-descent-of-category-of-algebras}
The association $M\mapsto\Alg_M(\cat{A})$ (which is contravariantly
functorial in open embeddings of codimension $0$) satisfies
descent for any effectively factorizing l-nice basis.
\end{theorem}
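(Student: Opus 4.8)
The plan is to show that the restriction functor $R\colon\Alg_M(\cat{A})\to\lim_{i\in\cat{C}^\op}\Alg_{U_i}(\cat{A})$ — where the basis $\cover{U}$ is given by $\chi\colon\cat{C}\to\Open(M)$, $i\mapsto U_i$ — is an equivalence, by constructing an explicit quasi-inverse $L$ given by left Kan extension along $\chi$. (Here descent for the basis $\cover{U}$ is the assertion that $R$ is an equivalence; a \v{C}ech nerve is not needed, since overlaps of members of $\cover{U}$ are themselves covered by members of $\cover{U}$ they contain.) Restriction to an open submanifold preserves local constancy and the factorization property — both are pointwise conditions on open submanifolds — so $R$ is well defined; and, by definition of a basis, $\chi$ restricts to an effectively factorizing l-nice basis $\cat{C}_{/U_i}\to\Open(U_i)$ of each $U_i$, so Propositions \ref{prop:kan-extension-from-basis} and \ref{prop:descent-of-algebras} are available over each $U_i$. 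With $L$ in hand, the composite $L\circ R$ is immediately the identity: for $A\in\Alg_M(\cat{A})$ the coherent family $R(A)$ has $i$-component $A\resto{U_i}$, so $L(R(A))=\chi_!(i\mapsto A(U_i))=\chi_!(A\chi)$, which is $A$ by Proposition \ref{prop:kan-extension-from-basis}. The remaining work is to define $L$, to check that it lands in $\Alg_M(\cat{A})$, and to check $R\circ L\equivwith\id$; both of the latter will rest on a single identification.

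To define $L$: an object of $\lim_{i\in\cat{C}^\op}\Alg_{U_i}(\cat{A})$ is a coherent family of locally constant factorization algebras $A_i$ on the $U_i$ with compatible equivalences $A_j\resto{U_i}\equivwith A_i$ indexed by the arrows of $\cat{C}$; evaluating the component algebras at their own $U_i$ gives, functorially in the family, a functor $\widetilde{A}\colon\cat{C}\to\cat{A}$, $i\mapsto A_i(U_i)$, and we set $L\bigl((A_i)_i\bigr):=\chi_!\widetilde{A}$, so that $(L(A_\bullet))(V)\equivwith\colim_{i\in\cat{C}_{/V}}A_i(U_i)$. That $\chi_!\widetilde{A}$ underlies a symmetric monoidal functor, hence a prealgebra, follows just as in the discussion preceding Section \ref{sec:assumption-on-colimit}: distributivity of the monoidal multiplication over the colimits defining $\chi_!$ reduces, by Propositions \ref{prop:cofinality-of-inversion} and \ref{prop:lurie-siftedness}, to distributivity over sifted colimits, which is our standing hypothesis.

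Next, to see that $\chi_!\widetilde{A}$ is a locally constant factorization algebra, apply Proposition \ref{prop:descent-of-algebras} to the basis $\cover{U}$: condition \ref{it:kan-ext-from-basis} holds by construction, and conditions \ref{it:local-constancy} and \ref{it:right-value-on-basis} on a basic open $U_i$ hold as soon as the restriction $(\chi_!\widetilde{A})\resto{U_i}$ is identified with $A_i$ (which is a locally constant factorization algebra on $U_i$ by hypothesis). This identification — which simultaneously supplies the counit equivalence $R\circ L\equivwith\id$ — is obtained by rewriting $(\chi_!\widetilde{A})\resto{U_i}$, using the slice identity $(\cat{C}_{/U_i})_{/W}=\cat{C}_{/W}$, as the left Kan extension along $\cat{C}_{/U_i}\to\Open(U_i)$ of the restriction of $\widetilde{A}$ to $\cat{C}_{/U_i}$, and then invoking Proposition \ref{prop:kan-extension-from-basis} over $U_i$ — whose basis is precisely $\cat{C}_{/U_i}\to\Open(U_i)$ — together with the coherence data of the family in order to recognize that Kan extension as $A_i$. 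Granting this, $L\circ R\equivwith\id$ and $R\circ L\equivwith\id$, so $R$ is an equivalence, which is the asserted descent statement.

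The step I expect to be the main obstacle is exactly this last identification $(\chi_!\widetilde{A})\resto{U_i}\equivwith A_i$, i.e.\ the coherence bookkeeping in the gluing: one must reconcile the morphisms available in the abstract indexing category $\cat{C}$ with all the inclusions among basic open submanifolds, and it is here that one genuinely uses that $\cover{U}$ is a \emph{basis} and not merely a cover (so that $\cat{C}_{/U_i}\to\Open(U_i)$ is again an effectively factorizing l-nice basis and Proposition \ref{prop:kan-extension-from-basis} applies over $U_i$). Threading this through the three conditions of Proposition \ref{prop:descent-of-algebras} while keeping the ambient colimits sifted (via Propositions \ref{prop:cofinality-of-inversion} and \ref{prop:lurie-siftedness}, which also underlie the monoidality check) is the delicate part of the argument.
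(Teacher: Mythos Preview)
Your proposal takes essentially the same approach as the paper: the paper's proof is a single sentence deducing the theorem from Proposition~\ref{prop:descent-of-algebras} (together with Proposition~\ref{prop:kan-extension-from-basis}), and you have unpacked what that deduction actually entails --- constructing the inverse by left Kan extension along $\chi$ and verifying conditions \eqref{it:local-constancy}--\eqref{it:kan-ext-from-basis}.

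One remark on the point you yourself flag as the obstacle. The identification $(\chi_!\widetilde{A})\resto{U_i}\equivwith A_i$ does not follow from the coherence data of the family alone: that data is indexed by morphisms of $\cat{C}$, whereas $\cat{C}_{/U_i}$ consists of objects $j$ with $U_j\sub U_i$ in $\Open(M)$, and there need be no map $j\to i$ in $\cat{C}$. So the natural transformation $A_j(U_j)\to A_i(U_j)$ you need is not directly supplied. This is genuinely where the effectively factorizing l-nice \emph{basis} hypothesis enters (beyond the Kan extension property): it guarantees that for $W\sub U_i$ the indexing category $\cat{C}_{/W}$ is rich enough to mediate the comparison --- concretely, both $A_i$ and $(\chi_!\widetilde{A})\resto{U_i}$ are left Kan extended from their values on $\Dis(U_i)$ (via $\ccat{D}(U_i)$), and it is at that level, using the cofinality built into the basis hypothesis and the local constancy of each $A_j$, that the two diagrams become comparable. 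The paper's one-line proof leaves this implicit as well; you are right to single it out.
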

\begin{proof}[Proof assuming Proposition]
If $A$ is a locally constant factorization algebra on a manifold $U$,
then the conditions \eqref{it:local-constancy} and
\eqref{it:right-value-on-basis} of Proposition are satisfied.
\end{proof}

Let us seek for a proof of Proposition.
Having Proposition \ref{prop:kan-extension-from-basis}, the only
non-trivial point of proof would be in showing that $A$ is locally
constant.
Although Proposition \ref{prop:descent-of-algebras} can be proved in a
direct manner, we shall deduce it from a similar theorem in a more
specific situation, with weaker looking local constancy assumption.
The weaker assumption is more flexible, and the theorem will turn out
to be useful.

The theorem is as follows.
(We shall use its corollary \ref{cor:descent-of-general-algebra} for
our proof of Proposition \ref{prop:descent-of-algebras}.)

\begin{theorem}\label{thm:general-notion-of-algebra}
Let $M$ be a manifold, and let $\cover{V}$ be an effectively
factorizing l-nice basis of $M$, given by a (necessarily symmetric)
monoidal functor $\psi\colon\cat{E}\to\Open(M)$, $i\mapsto V_i$, from
a symmetric partial monoidal category $\cat{E}$, landing in fact in
$\Dis(M)$.
Let $\cat{E}_1$ be a category mapping to (the underlying category of)
$\cat{E}$, for which the hypotheses \ref{hy:monoidal-basis}
below are satisfied.
Then a prealgebra $A$ in $\cat{A}$ on $M$ is a locally constant
factorization algebra on $M$ if and only if it satisfies the
following.
\begin{enumerate}
\setcounter{enumi}{-1}
\item $A\psi$ sends every morphism in $\cat{E}_1$ to an equivalence.
\item The underlying functor of $A$ is a left Kan extension of its
      restriction $A\psi$ to the factorizing basis.
\end{enumerate}
\end{theorem}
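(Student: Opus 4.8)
The plan is to prove both implications, with the real content in the ``if'' direction. For ``only if'': if $A$ is a locally constant factorization algebra on $M$, then condition~(1) is exactly Proposition~\ref{prop:kan-extension-from-basis}, since $\cover{V}$ is an effectively factorizing l-nice basis; and condition~(0) holds because, by the hypotheses~\ref{hy:monoidal-basis}, the composite $\cat{E}_1\to\cat{E}\xrightarrow{\psi}\Open(M)$ lands in $\Disk(M)$ and carries every morphism of $\cat{E}_1$ to an inclusion of one disk into another, which $A$ inverts by local constancy.

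For ``if'', assume $A$ satisfies~(0) and~(1). Since $A$ is a prealgebra, $A\psi\colon\cat{E}\to\cat{A}$ is symmetric monoidal, and restricting along $\cat{E}_1\to\cat{E}$ and using~(0) gives a morphism of multicategories out of $\cat{E}_1$ which inverts every morphism of $\cat{E}_1$. The heart of the proof is to convert this datum into a genuine locally constant algebra on $M$; this is what the hypotheses~\ref{hy:monoidal-basis} are designed to make possible. They ensure that the pair $\cat{E}_1\to\cat{E}$ compares with Lurie's pair $\Disk(M)\to\Dis(M)$, and with the completions $\Elu_M$, $\ccat{D}(M)$, $\parmo\Elu_M$ of Section~\ref{sec:isotopy-invariance}, via functors that become cofinal after inverting the designated morphisms --- here the ``effectively factorizing l-nice basis'' property of $\cover{V}$ enters, in the spirit of Proposition~\ref{prop:van-kampen-cofinality}. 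Transporting $A\psi\resto{\cat{E}_1}$ across these comparisons and invoking Theorem~\ref{thm:lurie-multilocalization}, Propositions~\ref{prop:cofinality-of-inversion} and~\ref{prop:lurie-siftedness}, Corollary~\ref{cor:isotopy-invariance}, and the standing assumption that $\cat{A}$ has sifted colimits preserved variable-wise by the monoidal multiplication, I obtain a locally constant factorization algebra $B$ on $M$. The hypotheses also guarantee that $\cat{E}$ is generated under disjoint union by $\cat{E}_1$, so agreement of the symmetric monoidal functors $A\psi$ and $B\psi$ on $\cat{E}_1$ forces $A\psi\equivwith B\psi$ on all of $\cat{E}$.

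To finish, both $A$ and the underlying functor of $B$ are left Kan extensions along $\psi$ of the same functor $A\psi\equivwith B\psi$ --- for $A$ this is condition~(1), for $B$ it is Proposition~\ref{prop:kan-extension-from-basis} --- so $A\equivwith B$ as prealgebras and $A$ is a locally constant factorization algebra. I expect the main obstacle to be the middle step: producing the cofinal comparison between the partial-monoidal localization of $\cat{E}$ at the $\cat{E}_1$-morphisms and $\ccat{D}(M)$ (and between the localization of $\cat{E}_1$ and the disk groupoid underlying $\Elu_M$), and verifying that the transported structure remains symmetric monoidal and that the relevant colimits remain sifted so the Kan extensions compute correctly. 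The argument there should run parallel to the construction of the square involving $\parmo\Elu_M$ in Section~\ref{sec:isotopy-invariance} and to the cofinality argument in the proof of Proposition~\ref{prop:van-kampen-cofinality}, with the hypotheses~\ref{hy:monoidal-basis} providing the combinatorial input.
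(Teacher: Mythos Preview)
Your ``only if'' direction is fine and matches the paper. The ``if'' direction, however, has a genuine gap at exactly the point you flag as the main obstacle.

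Your plan is to build a locally constant factorization algebra $B$ on $M$ from the datum $A\psi\resto{\cat{E}_1}$ alone, by transporting it across a comparison between (a localization of) $\cat{E}_1$ and $\Elu_M$. But producing such a $B$ amounts to showing that the functor $\Alg_M(\cat{A})\to\Alg^\loc_{\cat{E}_1}(\cat{A})$ is essentially surjective, which is precisely the content of the theorem (see Remark~\ref{rem:factorization-basis-by-multifunctor}). Your sketch does not explain how to establish this independently: you would need either to verify that $\cat{E}_1\to\Elu_M$ is a weak approximation in Lurie's sense, or to produce the ``cofinal comparison'' you mention, and neither the hypotheses nor the results you cite give this directly. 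The analogy with Proposition~\ref{prop:van-kampen-cofinality} does not carry over, since that argument concerns configuration spaces and does not see the multicategory structure needed to recover multimaps in $\Elu_M$.

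The paper avoids this circularity by working with the \emph{given} prealgebra $A$ on all of $\Open(M)$, not just with its restriction to $\cat{E}_1$. The key move is to show that $A\resto{\Dis(M)}$ extends to $\ccat{D}(M)$ by taking its \emph{right} Kan extension $\extend{A}$ along $j\colon\Dis(M)\to\ccat{D}(M)$ and checking that $\extend{A}(jV)\to A(V)$ is an equivalence for every basic $V\in\cat{E}$. Writing $V=\Disj_{s\in S}\psi_1(i_s)$ with $i_s\in\cat{E}_1$, one replaces the limit defining $\extend{A}(jV)$ by a limit over $(\cat{E}_S)_{jU/}$ via a coinitiality argument (this is where the Cartesian square in Hypothesis~\ref{hy:monoidal-basis} and Lemma~\ref{lem:cofinality-with-label} on the labeled disk categories $\Dis_S(M)\to\ccat{D}_S(M)$ are used). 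Over $(\cat{E}_S)_{jU/}$ every map in the diagram is inverted by condition~(0), and the indexing category has contractible classifying space, so the limit is $A(V)$. Once $A\resto{\Dis(M)}$ extends to $\ccat{D}(M)$, Proposition~\ref{prop:cofinality-of-inversion} and the effective l-niceness of the basis give that $A$ is a left Kan extension from $\Dis(M)$, hence a locally constant factorization algebra. The missing idea in your proposal is this right-Kan-extension trick together with the passage to the $S$-labeled categories $\cat{E}_S$, $\Dis_S(M)$, $\ccat{D}_S(M)$.
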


In other words, any pair $\cat{E}_1\to\cat{E}$
satisfying the hypotheses can replace the pair
$\Disk(M)\to\Dis(M)$ in the definition of a locally constant
factorization algebra.

\begin{remark}\label{rem:niceness-of-basis-by-disks}
For every $U\sub M$, the section
$\cat{E}_{/U}\to\laxcol_{i\in\cat{E}_{/U}}\Dis(V_i)$ to the canonical
functor
$\laxcol_{i\in\cat{E}_{/U}}\Dis(V_i)\to\laxcol_{i\in\cat{E}_{/U}}*=\cat{E}_{/U}$,
sending $i$ to the image of the (existing!) terminal object of
$\Dis(V_i)$ in the colimit, is cofinal.

In particular, the assumption that the basis is effectively
factorizing l-nice is equivalent to that the composite
\[
\cat{E}_{/U}\xrightarrow{\psi}\Dis(U)\longto\ccat{D}(U)
\]
is cofinal for every $U$, since this can be written as the composite
\[
\cat{E}_{/U}\longto\colim_{i\in\cat{E}_{/U}}\Dis(V_i)\longto\ccat{D}(U).
\]
See Remark \ref{rem:van-kampen-cofinality}.
\end{remark}

\subsubsection{}
We need to introduce some notation to state the hypotheses.
Note that, a map $f\colon D\to E$ in $\ccat{D}(M)$ is an equivalence
if and only if the embedding $D\into E$ (call it $g$) contained as a
part of data determining $f$, is the disjoint union of embeddings of a
single disk into another.
That is, if and only if there is a one-to-one correspondence between
the connected components of $D$ and those of $E$, such that $g$ embeds
each component of $D$ into the corresponding component of $E$.

Given a finite set $S$, we denote by $\ccat{D}_S(M)$ the
groupoid whose objects are families $D=(D_s)_{s\in S}$ of disks
labeled by elements of $S$, and
pairwise disjointly embedded in $M$, and a morphism $D\to
E=(E_s)_{s\in S}$ is an equivalence $\Disj_SD\equivto\Disj_SE$ in
$\ccat{D}(M)$ which preserves the labels.
(Note the difference of this with just maps $D_s\to E_s$ for every
$s\in S$.)

Analogously, let $\Dis_S(M)$ denote the poset whose objects are
families $D=(D_s)_{s\in S}$ of disks labeled by elements of $S$, and
pairwise disjointly embedded in $M$, and a morphism $D\to
E=(E_s)_{s\in S}$ is an inclusion in $M$ such that $D_s\sub E_s$ for
every $s\in S$.
(This is the same as a family of inclusions labeled by $s\in S$.)
For example, if $S$ consists of $1$ element, then $\Dis_S=\Disk$.

\begin{lemma}\label{lem:cofinality-with-label}
The functor $\Dis_S(M)\to\ccat{D}_S(M)$ is cofinal.
\end{lemma}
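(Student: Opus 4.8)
The plan is to imitate the proofs of Propositions \ref{prop:cofinality-of-inversion} and \ref{prop:van-kampen-cofinality}, of which this lemma is the ``labeled'' analogue. First, $\Dis_S(M)$ is a poset, hence a $1$-category, and $\ccat{D}_S(M)$ is a groupoid, so by Lemma \ref{lem:standard-cofinality} it suffices to prove that the induced map $B\Dis_S(M)\longto\ccat{D}_S(M)$ is an equivalence. By Joyal's version of Quillen's Theorem A (\cite{topos}), equivalently by the reasoning in the second proof of Lemma \ref{lem:standard-cofinality}, this amounts to showing that for every object $E=(E_s)_{s\in S}$ of $\ccat{D}_S(M)$ the comma category $\cat{C}_E:=\Dis_S(M)\times_{\ccat{D}_S(M)}\ccat{D}_S(M)_{E/}$ has contractible classifying space; since $\ccat{D}_S(M)$ is a groupoid, $\cat{C}_E$ is a model for the homotopy fibre of $B\Dis_S(M)\longto\ccat{D}_S(M)$ over $E$.

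To compute $B\cat{C}_E$, I would use the forgetful functor $\cat{C}_E\longto\Dis_S(M)$: it is the left fibration classified by $V\mapsto\Map_{\ccat{D}_S(M)}(E,V)$. Every morphism of $\Dis_S(M)$, being a family of inclusions $V_s\sub V'_s$, underlies a \emph{componentwise} embedding, hence an \emph{equivalence} in $\ccat{D}(M)$ and therefore in $\ccat{D}_S(M)$ (by the characterization of equivalences in $\ccat{D}(M)$ recalled just before the statement); so the classifying functor sends all morphisms to equivalences. Consequently $B\cat{C}_E$ is the total space of a fibration with fibre $\Aut_{\ccat{D}_S(M)}(E)$, the base being the classifying space of the full subcategory of $\Dis_S(M)$ spanned by the objects equivalent to $E$ in $\ccat{D}_S(M)$. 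It then suffices to identify this fibration with a based path fibration, which has contractible total space.

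The heart of the matter is the resulting configuration-space computation, carried out as in the proof of Proposition \ref{prop:van-kampen-cofinality}. Using the ``germ of a chart at a chosen point'' construction there, but keeping the labels (so that no quotient by $\Aut(S)$ appears), one identifies $\ccat{D}_S(M)$ with the space $\Emb(S,M)$ of $S$-labeled configurations of points of $M$; a marked-point construction likewise identifies $B\Dis_S(M)$ with $\Emb(S,M)$, the comparison map becoming the obvious one, and the fibration of the previous paragraph becoming the path fibration of $\Emb(S,M)$ based at the configuration attached to $E$. The one nontrivial input --- which is also where Lurie's generalized Seifert--van Kampen theorem enters, exactly as in the proof of Proposition \ref{prop:van-kampen-cofinality} --- is that for a fixed configuration $x\in\Emb(S,M)$ the full subposet of $\Dis_S(M)$ consisting of families $V=(V_s)_{s\in S}$ with $x(s)\in V_s$ for every $s$ has contractible classifying space; this holds because that subposet is directed downwards: any two such families admit a common lower bound given by sufficiently small disjoint disks about the points of $x$.

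The main obstacle I anticipate is the bookkeeping involved in making the second and third paragraphs precise: exhibiting, with full homotopy coherence, the zig-zag of equivalences that turns the comma category $\cat{C}_E$ --- which carries genuine isotopy data and is not literally a poset of nested families --- into the path fibration of an ordered configuration space. This is the same flavour of straightening argument as in Lurie's proof of Proposition \ref{prop:cofinality-of-inversion}, and the downward-directedness statement above is the geometric fact that makes it succeed.
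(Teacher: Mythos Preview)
Your proposal is correct and follows essentially the same route as the paper: reduce via Lemma~\ref{lem:standard-cofinality} to showing that $B\Dis_S(M)\to\ccat{D}_S(M)$ is an equivalence, then identify both sides with the labeled configuration space $\Emb(S,M)$ using the germ-of-chart construction and Lurie's generalized Seifert--van Kampen theorem, exactly as in the proof of Proposition~\ref{prop:van-kampen-cofinality}. The paper carries this out directly, verifying the Seifert--van Kampen hypothesis for the cover $U\mapsto\prod_{s\in S}U_s$ of the labeled configuration space; your observation that the relevant subposet is codirected is precisely that verification. Your second paragraph, analysing the comma category $\cat{C}_E$ as a left fibration and matching it with a based path fibration, is correct but superfluous: once your third paragraph establishes that the map $B\Dis_S(M)\to\ccat{D}_S(M)$ is an equivalence, Lemma~\ref{lem:standard-cofinality} already finishes the argument without needing to revisit each $\cat{C}_E$ separately.
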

\begin{proof}
By Lemma \ref{lem:standard-cofinality}, it suffices to prove that this
functor identifies the groupoid $\ccat{D}_S(M)$ with the classifying
space of $\Dis_S(M)$.
The argument for this will be similar to the proof of Proposition
\ref{prop:van-kampen-cofinality}.

Namely, let $D=(D_s)_{s\in S}$ be a family of free (i.e., not
embedded) disks indexed by the elements of $S$.
Then we deduce as before that it suffices to prove that the map
\[
\colim_{U\in\Dis_S(M)}\prod_{s\in
  S}\Emb\bigl(D_s,U_s\bigr)\longto\Emb\bigl(\textstyle{\coprod_SD,M}\bigr)
\]
is an equivalence.

It further follows in the similar manner as before, that it suffices
to prove that the map
\[
\colim_{U\in\Dis_S(M)}\prod_SU\longto\Conf(S,M)
\]
is an equivalence.

The equivalence follows from applying the generalized
Seifert--van Kampen theorem to the following open cover of
$\Conf(S,M)$.
The cover is indexed by the category $\Dis_S(M)$, and is given by the
functor which associates to $U\in\Dis_S(M)$, the open subset
$\prod_SU$ of $\Conf(S,M)$.
It is immediate to see that this cover satisfies the assumption for
the generalized Seifert--van Kampen theorem.
\end{proof}

\begin{remark}
Note that the last step of Proof of Lemma
\ref{lem:cofinality-with-label} implies that the classifying space of
$\Dis_S(M)$ is equvalent to the labeled configuration space of $M$.
Namely, the groupoid $\ccat{D}_S(M)$ models this space.
\end{remark}

\subsubsection{}
The hypotheses on the factorizing basis are the following.
For a finite set $S$, denote by $\cat{E}_S$ the category of
$S$-labeled families of objects of $\cat{E}_1$ for which the tensor
product over $S$ is defined in $\cat{E}$.

\begin{hypothesis}\label{hy:monoidal-basis}
\mbox{}
\begin{itemize}
\item $\psi_1:=\psi\resto{\cat{E}_1}$ lands in $\Disk(M)$.
\item $\psi_1$ defines a (non-factorizing) effectively l-nice basis.
  (This is equivalent here to that
  $\psi_1\colon(\cat{E}_1)_{/U}\to\Disk(U)$ is an equivalence on the
  classifying spaces for every open $U\sub M$.
  See Remark \ref{rem:niceness-of-basis-by-disks}.
  $B\Disk(U)$ is equivalent to $U$.)
\item If a finite set $S$ consists of $1$ element, then $\cat{E}_S$ is
  the whole of $\cat{E}_1$.
\item For every finite set $S$, the square
  \begin{equation}\label{eq:pull-back-hypothesis}
  \begin{CD}
    \cat{E}_S@>>>\Dis_S(M)\\
    @V{\Tensor_S}VV@VV{\Disj_S}V\\
    \cat{E}@>\psi>>\Dis(M)
  \end{CD}    
  \end{equation}
  is Cartesian.
\end{itemize}
\end{hypothesis}

\begin{remark}
Considering the case where the finite set $S$ consists of $1$
element, we have a Cartesian square
\[\begin{CD}
\cat{E}_1@>\psi_1>>\Disk(M)\\
@VVV@VVV\\
\cat{E}@>\psi>>\Dis(M).
\end{CD}\]
In particular, the functor $\cat{E}_1\to\cat{E}$ is a full embedding.

Other $\cat{E}_S$ are (non-full) subcategories of $\cat{E}$.
\end{remark}

\begin{remark}
The consequence of the last condition of the above hypothesis which
will be actually used in the proof will be that 
for any object $D\in\ccat{D}_S(M)$, the square
\begin{equation}\label{eq:square-of-under-categories}
\begin{CD}
  (\cat{E}_S)_{D/}@>>>\Dis_S(M)_{D/}\\
  @V{\Disj_S}VV@VV{\Disj_S}V\\
  \cat{E}_{\Disj_SD/}@>\psi>>\Dis(M)_{\Disj_SD/}
\end{CD}
\end{equation}
is Cartesian.

This follows from the assumption since the assumption implies that the
square
\[\begin{CD}
(\cat{E}_S)_{E/}@>>>\Dis_S(M)_{E/}\\
@V{\Disj_S}VV@VV{\Disj_S}V\\
\cat{E}_{E/}@>\psi>>\Dis(M)_{E/}
\end{CD}\]
is Cartesian for every $E\in\ccat{D}(M)$, while the square
\[\begin{CD}
(\cat{E}_S)_{D/}@>>>\Dis_S(M)_{D/}\\
@V{\Disj_S}VV@VV{\Disj_S}V\\
(\cat{E}_S)_{\Disj_SD/}@>\psi>>\Dis_S(M)_{\Disj_SD/}
\end{CD}\]
is always Cartesian for every $D\in\ccat{D}_S(M)$.

In order to have that the square (\ref{eq:square-of-under-categories})
is Cartesian for every $D\in\ccat{D}_S(M)$, we do need the
full force of the assumption, since if we have that the map
$(\cat{E}_S)_{D/}\to[\cat{E}\times_{\Dis(M)}\Dis_S(M)]_{D/}$ is an
equivalence for every $D\in\ccat{D}_S(M)$, then the colimit of
this over all $D$ will be the original assumption.
\end{remark}

The following is a situation where the hypotheses are satisfied.
\begin{example}
\label{ex:free-disjoint-union}
Suppose given a (non-factorizing) effectively l-nice basis given by a
functor $\psi_1\colon\cat{E}_1\to\Open(M)$, $i\mapsto V_i$.
Then we can freely generate a symmetric, partially monoidal category
from $\cat{E}_1$ by using the partial monoidal structure of
$\Open(M)$.
Namely, we consider a category $\cat{E}$ whose objects are pairs
consisting of a finite set $S$ and a family $(i_s)_{s\in S}$ of
objects of $\cat{E}_1$ for which the open submanifolds
$V_{i_s}\sub M$ are pairwise disjoint.
The symmetric partial monoidal structure on $\cat{E}$ is defined in
the obvious way, and $\psi_1$ extends to a symmetric monoidal functor
$\cat{E}\to\Open(M)$, which we shall denote by $\psi$.

In this case, the underlying functor of $\psi$ defines an effectively
factorizing l-nice basis of $M$ at least if
$\psi_1$ (and so $\psi$ as well) is the inclusion of a full subposet.

If $\psi_1$ lands in $\Disk(M)$, then $\psi$ lands in $\Dis(M)$, and
the square (\ref{eq:pull-back-hypothesis}) is Cartesian by our
construction of the partial monoidal category $\cat{E}$.
\end{example}

\begin{example}
For an example of the previous example, we can take $\cat{E}_1$ to be
the full subposet of $\Open(M)$ consisting of open submanifolds
\emph{diffeomorphic} (rather than homeomorphic) to a disk.
In this case, $\cat{E}$ is the full subposet of $\Open(M)$ consisting
of open submanifolds diffeomorphic to the disjoint union of a finite
number of disks.
\end{example}

\begin{remark}\label{rem:factorization-basis-by-multifunctor}
$\cat{E}_1$ has a structure of a multicategory where for a finite set
$S$, the space of multimaps $i\to j$ for $i=(i_s)_{s\in S}$,
$i_s,j\in\cat{E}_1$, is non-empty if and only if $i\in\cat{E}_S$, and
in such a case,
\[
\Multimap_{\cat{E}_1}\bigl(i,j\bigr)\:=\:\Map_{\cat{E}}\bigl({\textstyle\Tensor_Si,j}\bigr).
\]
A symmetric monoidal functor on $\cat{E}$ restricts to an algebra on
$\cat{E}_1$, and this gives an equivalence of categories.
We may say that an algebra $A$ on $\cat{E}_1$ or equivalently, on
$\cat{E}$, is \kore{locally constant} if $A$ inverts all unary maps of
$\cat{E}_1$, and may denote the category of locally constant algebras
by $\Alg^\loc_{\cat{E}_1}(\cat{A})=\Alg^\loc_\cat{E}(\cat{A})$.

Our assumptions gives a functor $\cat{E}_1\to\Disk_1(M)$ of
multicategories, and Theorem \ref{thm:general-notion-of-algebra} may
be stated as that the induced functor
\[
\Alg_M(\cat{A})\longto\Alg^\loc_{\cat{E}_1}(\cat{A})
\]
is an equivalence.
\end{remark}

\begin{proof}[Proof of Theorem \ref{thm:general-notion-of-algebra}]
Necessity follows from the definition of local constancy and
Proposition \ref{prop:kan-extension-from-basis}.

For sufficiency, it suffices to prove that the given conditions on $A$
imply that the underlying functor of the restriction of $A$ to
$\Dis(M)$ extends to $\ccat{D}(M)$.
Indeed, once we have this, then Proposition
\ref{prop:cofinality-of-inversion} and the effective l-niceness of the
basis imply that, for every open $U\sub M$, the map
$\colim_{\cat{E}_{/U}}A\psi\to\colim_{\Dis(U)}A$ is an
equivalence, so $A$, which is assumed to be a left Kan extension from
$\cat{E}$, will in fact be a Kan extension from $\Dis(M)$.

In order to extend the underlying functor of $A\resto{\Dis(M)}$ to
$\ccat{D}(M)$, let us show that the right Kan extension $\extend{A}$
of $A\resto{\Dis(M)}$ to $\ccat{D}(M)$ coincides with $A$ on
$\Dis(M)$.
(For the solution for an issue here, see the remark after the proof.)
It actually suffices to show that the map $\extend{A}(jV)\to A(V)$, is
an equivalence for every $V$ in the factorizing basis, where
$j\colon\Dis(M)\to\ccat{D}(M)$ is the functor through which we are
comparing the two categories.
Indeed, if $D$ is an arbitrary object of $\Dis(M)$, and if we have
equivalences
\[
\colim_{\ccat{D}(D)}\extend{A}\xleftarrow{\equiv}\colim_{\cat{E}_{/D}}\extend{A}j\psi\xrightarrow{\equiv}\colim_{\cat{E}_{/D}}A\psi,
\]
then by the Kan extension assumption on $A$, we have that the map
$\extend{A}(D)\to A(D)$ is an equivalence.

In order to prove that the map
$\extend{A}(jV)=\lim_{\Dis(M)_{jV/}}A\to A(V)$ is an
equivalence, we shall first replace the shape of the diagram
over which this limit is taken, by a coinitial one.
Decompose $V$ into a disjoint union
$\Disj_{s\in S}\psi(i_s)$, $S$ a finite set, where $i_s\in\cat{E}_1$ so
$U_s:=\psi(i_s)=\psi_1(i_s)$ is a disk.
Then we shall prove that the functors
$(\cat{E}_S)_{jU/}\xrightarrow{\psi}\Dis_S(M)_{jU/}\into\Dis(M)_{jV/}$,
where $U=(U_s)_{s\in S}\in\Dis_S(M)$ (so $jV=\Disj_SjU$), are
coinitial.

The reason why the inclusion
$\Dis_S(M)_{jU/}\into\Dis(M)_{jV/}$ is coinitial is since this is
obviously a left adjoint.

In order to prove that the functor
$\psi\colon(\cat{E}_S)_{jU/}\to\Dis_S(M)_{jU/}$ is coinitial,
let us consider an object of $\Dis_S(M)_{jU/}$ which, as an object of
$\Dis(M)_{jV/}$, is given by the pair
consisting of an object $D$ of $\Dis(M)$ and a map $f\colon jV\to jD$
in $\ccat{D}(M)$.

Then, since we are requiring $f$ to be a map in $\ccat{D}_S(M)$, $D$
can be written as a disjoint union $\Disj_{s\in S}D_s$ of disks, where
the embedding part $g\colon V\into D$ of the data determining $f$,
embeds $U_s$ into $D_s$.
With this notation, it follows from definitions that the over
category $\left[(\cat{E}_S)_{jU/}\right]_{/(D,f)}$, which we want to
prove has contractible classifying space (here, $(D,f)$ is considered
as an object of $\Dis_S(M)_{jU/}$), is equivalent to
$\prod_{s\in S}(\cat{E}_1)_{/D_s,gU_s/}$, where we are
considering $D_s$ as an object of $\Disk(M)=\Dis_1(M)$, and $gU_s$ as
an object of $\ccat{D}_1(D_s)$, the full subcategory of $\ccat{D}(D_s)$
consisting of disks.

However, the functor
$j\psi_1\colon(\cat{E}_1)_{/D_s}\to\ccat{D}_1(D_s)$ is cofinal by the
assumption of effective l-niceness,
so we conclude that $(\cat{E}_1)_{/D_s,gU_s/}$ has contractible
classifying space, which implies that their product
$\left[(\cat{E}_S)_{jU/}\right]_{/(D,f)}$ also
has contractible classifying space.
This proves coinitiality of the functor
$\psi\colon(\cat{E}_S)_{jU/}\to\Dis_S(M)_{jU/}$.

It follows that the map
$\extend{A}(jV)\to\lim_{(\cat{E}_S)_{jU/}}A\psi$ is an
equivalence, so in order to conclude the proof, it suffices to show
that the map from this limit to $A(V)$ is an equivalence.

To analyse this limit, all the maps which appear in the diagram for
this limit are equivalences since they are induced from (a finite
family of) maps of $\cat{E}_1$, which $A\psi$ is assumed to invert.

It therefore suffices to show that the indexing category
$(\cat{E}_S)_{jU/}$ of the limit has contractible classifying space.
However, this follows from Lemma \ref{lem:cofinality-with-label} since
we have proved above that the functor
$\psi\colon(\cat{E}_S)_{jU/}\to\Dis_S(M)_{jU/}$ is coinitial.
\end{proof}

\begin{remark}
In the proof, we have used the right Kan extension of a functor taking
values in $\cat{A}$.
However, we do not need to assume existence of limits in $\cat{A}$ for
the validity of Theorem.
Indeed, our purpose for taking the Kan extension was to show that
the prealgebra $A$ was locally constant.
In order to prove this in the described method, $\cat{A}$ could be
fully embedded into a category which has all small limits (e.g., by
the Yoneda embedding), and the right Kan extension could be taken in
this larger category.
Note that the monoidal structure of $\cat{A}$ was not used in this
step of the proof.
\end{remark}

\begin{corollary}
\label{cor:descent-of-general-algebra}
Let $M$ be a manifold and let $\cover{V}$ be an effectively
factorizing l-nice basis of $M$ considered in
Theorem \ref{thm:general-notion-of-algebra}, equipped with all the
data, and satisfying all the assumptions.
Let $\cover{U}$ be another effectively factorizing l-nice basis of
$M$, given by a functor $\chi\colon\cat{C}\to\Open(M)$, $i\mapsto
U_i$.
Assume given a factorization $\psi=\chi\iota$, where
$\iota\colon\cat{E}\to\cat{C}$.

Then a prealgebra $A$ (not assumed to be locally constant) on $M$ is a
locally constant factorization algebra if (and only if) the following
are satisfied.
\begin{enumerate}\setcounter{enumi}{-1}
\item $A\psi$ inverts all morphisms of $\cat{E}_1$.
\item The functor $A\chi$ on $\cat{C}$ is a left Kan extension of
  its restriction $A\psi$ to $\cat{E}$ through $\iota$.
\item The underlying functor of $A$ is a left Kan extension of its
  restriction $A\chi$ to the basis $\cat{U}$.
\end{enumerate}
\end{corollary}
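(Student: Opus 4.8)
The plan is to deduce the statement from Theorem~\ref{thm:general-notion-of-algebra} by comparing the conditions (0)--(2) above with the two conditions of that theorem. Condition (0) above is verbatim condition (0) of Theorem~\ref{thm:general-notion-of-algebra}, so it suffices to prove that, granting (0), the conjunction of (1) and (2) is equivalent to the single requirement that the underlying functor of $A$ be a left Kan extension of $A\psi$ along $\psi$ (condition (1) of Theorem~\ref{thm:general-notion-of-algebra}).

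The ``if'' direction is formal: assuming (0)--(2), transitivity of left Kan extensions together with the factorization $\psi=\chi\iota$ yields
\[
A \;=\; \mathrm{Lan}_\chi(A\chi) \;=\; \mathrm{Lan}_\chi\bigl(\mathrm{Lan}_\iota(A\psi)\bigr) \;=\; \mathrm{Lan}_{\chi\iota}(A\psi) \;=\; \mathrm{Lan}_\psi(A\psi),
\]
the first equality being (2) and the second (1). Hence $A$ satisfies both hypotheses of Theorem~\ref{thm:general-notion-of-algebra}, so it is a locally constant factorization algebra; no difficulty arises here.

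For the ``only if'' direction, assume $A$ is a locally constant factorization algebra. Then (0) holds because, by the first clause of Hypothesis~\ref{hy:monoidal-basis}, $\psi$ carries each morphism of $\cat{E}_1$ to an inclusion of one disk into another, which $A$ inverts; and (2) holds by Proposition~\ref{prop:kan-extension-from-basis} applied to the basis $\cover{U}$. The substance is (1): the canonical comparison $\mathrm{Lan}_\iota(A\psi)\to A\chi$ is a map of functors on $\cat{C}$, so it suffices to show that
\[
\colim_{\cat{E}\times_{\cat{C}}\cat{C}_{/c}} A\psi \;\longto\; A(U_c), \qquad U_c:=\chi(c),
\]
is an equivalence for each $c\in\cat{C}$. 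The plan is to restrict $A$ to $U_c$ — it is again a locally constant factorization algebra there, e.g.\ by Theorem~\ref{thm:basic-descent-of-category-of-algebras} — and to note that $\psi$ carries $\cat{E}\times_{\cat{C}}\cat{C}_{/c}$ into $\Dis(U_c)$; once one knows that this functor $\cat{E}\times_{\cat{C}}\cat{C}_{/c}\to\Open(U_c)$ is an effectively factorizing l-nice cover of $U_c$, Lemma~\ref{lem:factorizing-seifert-vk} applied to $A|_{U_c}$ delivers precisely the displayed equivalence, naturally in $c$.

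The main obstacle is exactly that last claim — that $\cat{E}\times_{\cat{C}}\cat{C}_{/c}\to\Open(U_c)$ is an effectively factorizing l-nice cover of $U_c$. Since its values lie in $\Dis(U_c)$, by (the argument behind) Remark~\ref{rem:niceness-of-basis-by-disks} this reduces to the cofinality of
\[
\cat{E}\times_{\cat{C}}\cat{C}_{/c} \;\longto\; \Dis(U_c) \;\longto\; \ccat{D}(U_c).
\]
The analogous cofinality with the larger index $\cat{E}\times_{\Open(M)}\Open(M)_{/U_c}$ is available from the assumption that $\cover{V}$ is a basis, but there need not be a cofinal functor from $\cat{E}\times_{\cat{C}}\cat{C}_{/c}$ to it, so a direct argument is needed. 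I expect it to follow the pattern of Proposition~\ref{prop:van-kampen-cofinality} and of the proof of Theorem~\ref{thm:general-notion-of-algebra}: using Lemma~\ref{lem:standard-cofinality} and the identification (as in Lemma~\ref{lem:cofinality-with-label}) of the pertinent classifying spaces with labeled configuration spaces, one reduces to applying Lurie's generalized Seifert--van Kampen theorem to suitable covers of the configuration spaces $\Conf(S,U_c)$ ($S$ finite) assembled from the $U_i$'s, using the factorizing l-niceness of $\cover{U}$ together with the effective l-niceness of $\cover{V}$ restricted to each $U_i$.
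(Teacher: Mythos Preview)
Your ``if'' direction is exactly the paper's proof: the paper simply notes that conditions (1) and (2) together (via transitivity of left Kan extensions through the factorization $\psi=\chi\iota$) exhibit $A$ as a left Kan extension of $A\psi$ along $\psi$, so Theorem~\ref{thm:general-notion-of-algebra} applies. That is the entire content of the paper's argument.

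The paper does \emph{not} prove the ``only if'' direction; it is stated parenthetically and left to the reader. So your treatment of that direction goes beyond what the paper provides. Your arguments for (0) and (2) are fine. For (1), you have correctly located the nontrivial point: one needs the functor
\[
\cat{E}\times_{\cat{C}}\cat{C}_{/c}\longto\ccat{D}(U_c)
\]
to be cofinal, and this does not follow from the stated hypotheses, which only give cofinality of $\cat{E}_{/U_c}=\cat{E}\times_{\Open(M)}\Open(M)_{/U_c}\to\ccat{D}(U_c)$. There is no assumed comparison between $\cat{C}_{/c}$ and $\Open(M)_{/U_c}$, so the cofinality you need is genuinely additional input. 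Your sketch of how one might try to establish it (reducing to configuration spaces and Seifert--van Kampen) is plausible in examples where $\iota$ is well-behaved on slices, but as a general argument it is incomplete, and indeed the ``only if'' direction may require an extra hypothesis on $\iota$ (for instance, that $\cat{E}\times_\cat{C}\cat{C}_{/c}\to\cat{E}_{/U_c}$ be cofinal for every $c$). Since the paper only uses the corollary in the ``if'' direction (for the proof of Proposition~\ref{prop:descent-of-algebras}), this does not affect the paper's logic.
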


\begin{proof}
$A$ is a left Kan extension of its restriction to the basis
$\cover{V}$, so the previous theorem applies.
\end{proof}

\begin{example}\label{ex:compact-manifolds-as-basis}
Consider the following \emph{discrete} category $\manc$.
An object is a compact smooth manifold with boundary.
A map $\close{U}\to\close{V}$ is a smooth immersion of codimension
$0$ which restricts to an embedding $U\into V$, where $U$ and $V$ are
the interior of $\close{U}$ and $\close{V}$ respectively.
$\manc$ is a symmetric monoidal category under disjoint union.

Let $\close{M}$ be an object of this category, and let $M$ denote its
interior.

Then, in the corollary, we can take $\cat{U}$ to be given by the map
$\chi\colon(\manc)_{/\close{M}}\to\Open(M)$ of partial monoidal posets
sending $\close{U}\to\close{M}$ to its restriction $U\into M$ while
taking $\cat{E}_1$ to be the full subposet of $(\manc)_{/\close{M}}$
consisting of objects whose
source has the diffeomorphism type of the closed disk,
and $\cat{E}$ to be the symmetric partial monoidal category freely
generated by $\cat{E}_1$.
Here, we are considering $(\manc)_{/\close{M}}$ as a partial monoidal
category under unions which is disjoint in interiors, and are inducing
a structure of symmetric multicategory on $\cat{E}_1$ from this.
$\cat{E}$ is the full subposet of $(\manc)_{/\close{M}}$ generated from
$\cat{E}_1$ by the partial monoidal product.
\end{example}

In other words, a locally constant factorization algebra on this $M$
could be defined as a symmetric monoidal functor on
$(\manc)_{/\close{M}}$ whose underlying functor satisfies the first
two conditions of Corollary.
The original notion is recovered by taking the left Kan extension of
the underlying functor, to $\Open(M)$, which obtains a canonical
symmetric monoidal structure.

\subsubsection{}

\begin{proof}[Proof of Proposition \ref{prop:descent-of-algebras}]
Define $\cat{E}:=\laxcol_{i\in\cat{C}}\Dis(U_i)$, and let
$\iota\colon\cat{E}\to\cat{C}$ be the canonical projection and let
$\psi:=\chi\iota$.
Let $\cat{E}_1\sub\cat{E}$ be $\laxcol_{i\in\cat{C}}\Disk(U_i)$.

It suffices to check that
Corollary \ref{cor:descent-of-general-algebra} applies.

Firstly, $\psi\colon\cat{E}\to\Open(M)$ defines an effectively
factorizing l-nice basis of $M$ since for every open $U\sub M$, the
functors $\Dis(U_i)\to\cat{E}_{/i}$ for $i\in\cat{C}_{/U}$, the
functor $\colim_{i\in\cat{C}_{/U}}\cat{E}_{/i}\to\cat{E}_{/U}$, and so the
composite
$\colim_{i\in\cat{C}_{/U}}\Dis(U_i)\to\colim_{i\in\cat{C}_{/U}}\cat{E}_{/i}\to\cat{E}_{/U}$,
as well as the composite
$\colim_{i\in\cat{C}_{/U}}\Dis(U_i)\to\cat{E}_{/U}\to\ccat{D}(U)$ are
cofinal.

Similarly, $\psi_1$ defines an effectively l-nice basis.

Moreover, for a finite set $S$,
$\cat{E}_S=\laxcol_{i\in\cat{C}_{/U}}\Dis_S(U_i)$, and the rest of
Hypothesis \ref{hy:monoidal-basis} is satisfied.
\end{proof}

Finally, we prove the following from Theorem
\ref{thm:general-notion-of-algebra}.

\begin{theorem}\label{thm:descent-of-categories-of-algebras}
The presheaf $M\mapsto\Alg_M(\cat{A})$ of categories is a sheaf.
\end{theorem}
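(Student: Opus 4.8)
The plan is to verify the sheaf condition directly. Fix a manifold $M$ and an open cover $\cover{U}=\{M_\alpha\}_{\alpha\in S}$, with \v{C}ech nerve $N_\bullet$, so $N_n=\coprod_{(\alpha_0,\dots,\alpha_n)}M_{\alpha_0\dots\alpha_n}$ where $M_{\alpha_0\dots\alpha_n}:=M_{\alpha_0}\intersect\dots\intersect M_{\alpha_n}$. Since disks are connected, $\Disk$ sends finite disjoint unions of manifolds to coproducts of multicategories, hence $\Alg_{(-)}(\cat{A})$ sends them to products; so $\Alg_{N_n}(\cat{A})=\prod_{(\alpha_0,\dots,\alpha_n)}\Alg_{M_{\alpha_0\dots\alpha_n}}(\cat{A})$, $\Alg_\kara(\cat{A})$ is terminal, and the sheaf condition for $\cover{U}$ is exactly the assertion that
\[
\Alg_M(\cat{A})\longto\lim_\Simp\Alg_{N_\bullet}(\cat{A})
\]
is an equivalence. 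The strategy is to reduce every term of this diagram, via Theorem \ref{thm:general-notion-of-algebra}, to a category of locally constant algebras over a multicategory of disks, and then to recognise those multicategories as themselves forming the \v{C}ech diagram of $\cover{U}$.

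Call a disk $D\sub M$ \emph{subordinate to $\cover{U}$} if $D\sub M_\alpha$ for some $\alpha$, let $\cat{E}_1:=\Disk^{\cover{U}}(M)$ be the full sub-multicategory of $\Disk(M)$ spanned by the subordinate disks, and let $\cat{E}$ be the symmetric partial monoidal subposet of $\Open(M)$ that $\cat{E}_1$ freely generates as in Example \ref{ex:free-disjoint-union} --- the poset of disjoint unions in $M$ of subordinate disks. I would first verify that the pair $\cat{E}_1\to\cat{E}$ satisfies Hypothesis \ref{hy:monoidal-basis} for $M$. Since $\cat{E}_1\into\Open(M)$ is a full subposet, Example \ref{ex:free-disjoint-union} reduces this to checking that $\cat{E}_1$ is a non-factorizing effectively l-nice basis of $M$, which by Remark \ref{rem:niceness-of-basis-by-disks} means that $(\cat{E}_1)_{/U}\into\Disk(U)$ is an equivalence on classifying spaces for every open $U\sub M$. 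This holds because the subordinate disks form a basis of the topology of $M$: $(\cat{E}_1)_{/U}$ is the poset of subordinate disks contained in $U$, which is closed under passing to subdisks and covers $U$, so its inclusion into $\Disk(U)$ is an equivalence on classifying spaces by the same cofinality and Seifert--van Kampen arguments used in the proofs above, and $B\Disk(U)\equivwith U$. Theorem \ref{thm:general-notion-of-algebra}, in the form of Remark \ref{rem:factorization-basis-by-multifunctor}, then produces a natural equivalence $\Alg_M(\cat{A})\equivwith\Alg^\loc_{\cat{E}_1}(\cat{A})$.

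For each intersection $W=M_{\alpha_0\dots\alpha_n}$ the same applies trivially: every disk in $W$ is already subordinate, so the analogue of $\cat{E}_1$ for $W$ is all of $\Disk(W)$, and $\Alg^\loc_{\Disk(W)}(\cat{A})=\Alg_W(\cat{A})$ is the definition. It then remains to identify $\cat{E}_1=\Disk^{\cover{U}}(M)$ with the colimit over $\Simp^\op$ of the multicategory-valued \v{C}ech diagram $[n]\mapsto\Disk(N_n)=\coprod_{(\alpha_0,\dots,\alpha_n)}\Disk(M_{\alpha_0\dots\alpha_n})$. The key point is that the full sub-multicategories $\Disk(M_\alpha)\sub\Disk^{\cover{U}}(M)$ cover it in the strong sense needed to realise it as this colimit: every subordinate disk lies in one of them, and --- since any multimap $\{D_i\}_i\to D'$ between disks of $M$ is the datum of a single embedding $\coprod_iD_i\into D'$ --- any multimap, and any composite of multimaps, with target $D'\sub M_\alpha$ already lies in $\Disk(M_\alpha)$, while the pairwise and higher overlaps of the covering family are precisely the $\Disk(M_{\alpha_0\dots\alpha_n})$. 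As $\cat{B}\mapsto\Alg_\cat{B}(\cat{A})$ carries colimits of multicategories to limits of categories, and the local-constancy condition on $\Disk^{\cover{U}}(M)$ is imposed by its unary maps, each of which again lies in some $\Disk(M_\alpha)$, one obtains
\[
\Alg^\loc_{\cat{E}_1}(\cat{A})\equivwith\lim_\Simp\Alg^\loc_{\Disk(N_\bullet)}(\cat{A})=\lim_\Simp\Alg_{N_\bullet}(\cat{A}),
\]
and combining with the previous paragraph yields $\Alg_M(\cat{A})\equivwith\lim_\Simp\Alg_{N_\bullet}(\cat{A})$.

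The main obstacle is this last identification: proving carefully that $\Disk^{\cover{U}}(M)$ is the \v{C}ech colimit of the $\Disk(M_\alpha)$ at the level of multicategories (not merely of underlying categories), and that forming locally constant algebras commutes with this colimit --- in essence, that a symmetric multicategory covered by full sub-multicategories in a way that controls all of its multimaps and compositions is recovered as the colimit of the associated \v{C}ech diagram. The remaining verifications, although they pass through the generalized Seifert--van Kampen theorem via Theorem \ref{thm:general-notion-of-algebra} and Example \ref{ex:free-disjoint-union}, are routine variants of the cofinality computations already carried out. I note that this route genuinely requires the sharpened Theorem \ref{thm:general-notion-of-algebra} rather than only the basic descent of Theorem \ref{thm:basic-descent-of-category-of-algebras}: the factorizing l-nice basis $\cat{E}$ is not indexed by anything resembling the \v{C}ech nerve of $\cover{U}$ --- a disjoint union of subordinate disks need lie in no single $M_\alpha$ --- whereas the multicategory $\Disk^{\cover{U}}(M)$ appearing in Theorem \ref{thm:general-notion-of-algebra} does decompose along the cover.
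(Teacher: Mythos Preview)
Your reduction coincides with the paper's: both define $\cat{E}_1=\Disk^{\cover{U}}(M)$ as the full sub-multicategory of subordinate disks, verify Hypothesis \ref{hy:monoidal-basis} via Example \ref{ex:free-disjoint-union}, and invoke Theorem \ref{thm:general-notion-of-algebra} to replace $\Alg_M(\cat{A})$ by $\Alg^\loc_{\cat{E}_1}(\cat{A})$. The divergence is in the remaining step. You attempt to realise $\cat{E}_1$ as the \v{C}ech colimit of the $\Disk(M_\alpha)$ in symmetric multicategories and then use that $\Alg_{(-)}(\cat{A})$ sends such colimits to limits; you correctly flag this as the main obstacle. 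The paper instead constructs an explicit inverse to the restriction functor: for $D\in\cat{E}_1$, the indexing category $\cat{C}_D=\{i\in\Simp_{/S}^\op\mid D\sub\chi(i)\}$ equals $\Simp_{/S_D}^\op$ with $S_D=\{s\mid D\sub U_s\}\ne\kara$, hence has contractible classifying space, so a compatible family $(A_i)$ determines $B(D):=\lim_{\cat{C}_D}A_i(D)\equivwith A_i(D)$ unambiguously; one extends $B$ monoidally to $\cat{E}$ and left-Kan-extends to $\Open(M)$, and Theorem \ref{thm:general-notion-of-algebra} again shows the result is a locally constant factorization algebra inverse to restriction.

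The paper's route is more elementary: the contractibility of $\cat{C}_D$ is exactly the gluing input and removes any need to compute colimits of infinity operads or to argue separately that local constancy passes through such a colimit. Your route, if the operadic colimit claim were established, would be somewhat more conceptual and would make the r\^ole of the sieve property of $\Disk(M_\alpha)\sub\cat{E}_1$ (every multimap with target in $M_\alpha$ lies entirely in $\Disk(M_\alpha)$) explicit; but that claim is genuinely nontrivial in the infinity-operadic setting and is not supplied. In effect, the paper's explicit inverse \emph{is} a direct proof of the equivalence $\Alg^\loc_{\cat{E}_1}(\cat{A})\equivwith\lim_\Simp\Alg_{N_\bullet}(\cat{A})$ that bypasses your obstacle.
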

\begin{proof}
Let a cover of a manifold $M$ be given by $\cover{U}=(U_s)_{s\in S}$
where $S$ is an indexing set.
Let $\cat{C}:=\Simp_{/S}^\op$ be as in Example
\ref{ex:factorizing-cover}, and define $\chi\colon\cat{C}\to\Open(M)$
in the way described there.
We would like to prove that the restriction functor
\begin{equation}\label{eq:restricting-algebras}
\Alg_M(\cat{A})\longto\lim_{i\in\cat{C}}\Alg_{\chi(i)}(\cat{A})
\end{equation}
is an equivalence.
We shall construct an inverse.

For an open disk $D\in\Disk(M)$, define
\[
\cat{C}_D:=\{i\in\cat{C}\:\mid\:D\sub\chi(i)\}.
\]
Then this is either empty or has contractible classifying space.
Indeed, $\cat{C}_D=\Simp_{/S_D}^\op$, where $S_D:=\{s\in S\:\mid\:D\in
U_s\}$.

We plan to apply Theorem \ref{thm:general-notion-of-algebra} to the
following pair of basis.
Namely, define $\cat{E}_1$ to be the full subposet of $\Disk(M)$
consisting of disks $D$ such that $\cat{C}_D$ is non-empty.
This gives an l-nice basis of $M$.
Then define a factorizing l-nice basis $\cat{E}$ as in Example
\ref{ex:free-disjoint-union}.
The full inclusion $\psi\colon\cat{E}\into\Dis(M)$ is a map of
(symmetric) partial monoidal posets, and the pair
$\cat{E}_1\into\cat{E}$ of bases for the
topology of $M$ satisfies Hypothesis \ref{hy:monoidal-basis}.

Let $(A_i)_{i\in\cat{C}}\in\lim_{i\in\cat{C}}\Alg_{\chi(i)}$ be given.
Then define $B\colon\cat{E}_1\to\cat{A}$ by
$D\mapsto\lim_{i\in\cat{C}_D}A_i(D)$, so $B(D)$ is canonically
equivalent to $A_i(D)$ for any $i\in\cat{C}_D$.
Extend this uniquely to a
symmetric monoidal functor $B\colon\cat{E}\to\cat{A}$.
Then the left Kan extension of the underlying functor
$\cat{E}\to\cat{A}$ through $\psi\colon\cat{E}\to\Open(M)$ of $B$, has
a symmetric monoidal structure which makes it a locally constant
factorization algebra by Theorem \ref{thm:general-notion-of-algebra}.

Moreover, it is immediate that this is inverse to the restriction
functor \eqref{eq:restricting-algebras}.
\end{proof}

It follows that there is a notion of a locally constant factorization
algebra on an orbifold, and locally constant factorization algebras
can be pulled back along a local diffeomorphism (between orbifolds).

\section{Generalizations and applications}
\label{sec:generalize-apply}
\setcounter{subsection}{-1}
\setcounter{equation}{-1}
\subsection{Push-forward}
\label{sec:push-forward}
\setcounter{subsubsection}{-1}

We continue with the assumption stated in Section
\ref{sec:assumption-on-colimit}.

Theorem \ref{thm:factorizing-seifert-vk}
allows us to push forward an algebra along ``locally constant'' maps.

Given any map $p\colon X\to M$ of manifolds, the map
$p^{-1}\colon\Open(M)\to\Open(X)$ is symmetric monoidal.
It follows that any prealgebra on $X$ can be precomposed with $p^{-1}$
to give a prealgebra $p_*A$ on $M$.
Namely, we define $p_*A:=A\circ p^{-1}$.

We may ask when $p_*A$ is locally constant, whenever $A$ is a
locally constant factorization algebra.
It follows from Theorem \ref{thm:lurie-multilocalization} that a
sufficient condition is that $p$ is locally trivial in the sense that
over every component of $M$, it is the projection of a fibre bundle.
(Note that in this case, $p$ can be considered as giving a locally
constant family of manifolds parametrized by points of $M$.)

\begin{proposition}
If $p\colon X\to M$ is locally trivial, then for every locally
constant factorization algebra on $A$, the locally constant prealgebra
$p_*A$ is a factorization algebra.
\end{proposition}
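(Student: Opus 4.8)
The plan is to apply Theorem~\ref{thm:factorizing-seifert-vk}, reducing the factorization (gluing) property of $p_*A$ to the elementary fact that $\Dis$ of a manifold is a factorizing l-nice cover of it. Since $p_*A=A\circ p^{-1}$ is already known to be locally constant when $p$ is locally trivial (by the discussion preceding the statement, using Theorem~\ref{thm:lurie-multilocalization}), it remains only to show that $p_*A$, as a functor, is the left Kan extension of its restriction to $\Dis(M)$. By pointwise-ness of the left Kan extension together with the identification $\Dis(M)_{/U}\equivwith\Dis(U)$, this is the assertion that for every open $U\sub M$ the canonical map
\[
\colim_{\Dis(U)}p_*A\longto p_*A(U)=A\bigl(p^{-1}(U)\bigr)
\]
is an equivalence.

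First I would fix $U$ and consider the functor $\chi\colon\Dis(U)\to\Open\bigl(p^{-1}(U)\bigr)$ sending $W$ to $p^{-1}(W)$ (an open submanifold of $p^{-1}(U)$, since $W\sub U$). The crux is that $\chi$ determines a factorizing l-nice cover of $p^{-1}(U)$: for a non-empty finite subset $x\sub p^{-1}(U)$ one has $x\sub p^{-1}(W)$ if and only if $p(x)\sub W$, so the full subcategory $\{W\in\Dis(U)\mid x\sub p^{-1}(W)\}$ equals $\{W\in\Dis(U)\mid p(x)\sub W\}$; as $p(x)$ is a non-empty finite subset of $U$, this has contractible classifying space precisely because $\Dis(U)\to\Open(U)$ is a factorizing l-nice cover (one of the basic examples above). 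Note that local triviality of $p$ is not used here at all --- this half of the statement holds for an arbitrary map $p$ of manifolds --- and local triviality enters only through the already-granted local constancy of $p_*A$.

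Next I would use the (elementary) fact that the restriction of $A$ to the open submanifold $p^{-1}(U)\sub X$ is again a locally constant factorization algebra, both local constancy and the Kan-extension-from-$\Dis$ property being inherited by open submanifolds. Applying Theorem~\ref{thm:factorizing-seifert-vk} to this restriction and to the factorizing l-nice cover $\chi$ then gives that
\[
A\bigl(p^{-1}(U)\bigr)\longfrom\colim_{\Dis(U)}A\chi=\colim_{\Dis(U)}p_*A
\]
is an equivalence, which is exactly what was required.

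I do not anticipate a genuine obstacle. The one point deserving care is the first step: one should resist reducing to $\Dis(X)$, since the preimage $p^{-1}(W)$ of a disjoint union of disks is in general \emph{not} a disjoint union of disks in $X$ (for instance when the fibre is not a disk), and instead use the full flexibility of Theorem~\ref{thm:factorizing-seifert-vk}, which applies to arbitrary factorizing l-nice covers. A minor bookkeeping remark is that $\Dis(U)$ occurring in $\chi$ parametrises disks of dimension $\dim M$ rather than $\dim X$, which causes no trouble since a factorizing l-nice cover is merely a functor into $\Open$ with no requirement that the covering opens be disks.
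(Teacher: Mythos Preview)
Your proof is correct and follows essentially the same approach as the paper: the paper's proof simply observes that $p^{-1}$ maps the factorizing l-nice cover $\Dis(U)$ of $U$ to a factorizing l-nice cover of $p^{-1}U$, and then applies Theorem~\ref{thm:factorizing-seifert-vk} to $A\resto{p^{-1}U}$. Your argument spells out exactly this, with the additional (correct) observation that local triviality is used only for local constancy of $p_*A$, not for the gluing step.
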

\begin{proof}
Given any open submanifold $U$ of $M$, $p^{-1}$ maps the factorizing
l-nice cover $\Dis(U)$ of $U$ to a factorizing l-nice cover of
$p^{-1}U$.
Therefore, the result follows from
Theorem \ref{thm:factorizing-seifert-vk} applied to
$A\resto{p^{-1}U}$.
\end{proof}

Let us give the push-forward functoriality on the groupoid
of locally trivial maps.
By definition, this groupoid is modeled by a Kan complex $K_\dot$
whose $k$-simplex is a locally constant family over the standard
$k$-simplex of locally trivial maps.
In other words, a $k$-simplex is a map $p\colon X\times\simp^k\to
M\times\simp^k$ over $\simp^k$ which is locally trivial.

Note from Theorem \ref{thm:lurie-multilocalization} and
Corollary \ref{cor:isotopy-invariance}, that a locally constant
algebra $A$ on $X$ is functorial on the groupoid of open submanifolds
of $X$, which can be modeled by a Kan complex whose $k$-simplex is a
locally constant family over the standard $k$-simplex, of open
submanifolds.

Now let $p$ be an $k$-simplex of $K_\dot$.
Then for every open submanifold $U$ of $M$, the projection
$p^{-1}(U\times\simp^k)\to\simp^k$ gives a $k$-simplex of the
space of open submanifolds of $X$.
We obtain the desired functoriality of the push-forward immediately.

\subsection{Case of a higher target category}
\label{sec:higher-target-category}
\setcounter{subsubsection}{-1}

\subsubsection{}
 
A natural notion of a \emph{twisted} factorization algebra would be
the notion of an algebra taking values in a factorization
\emph{algebra} of categories, instead of in a symmetric monoidal
category.
A twisted algebra in this sense will turn out to be just a map between
certain algebras taking values in the Cartesian symmetric monoidal
category $\Cat$ of categories (of some limited size).
In particular, the space of twisted algebras is a part of the
structure of a category of $\Alg_M(\Cat)$.
However, in order to capture the structure of a \emph{category}
(rather than just a space) of twisted algebras, we need to take into
account the structure of a \emph{$2$-category} of $\Alg_M(\Cat)$,
coming from the $2$-category structure of $\Cat$.
We can consider algebras in a symmetric monoidal $2$-category
in general, and it is in fact natural to consider a symmetric
monoidal $n$-category for any $n\le\infty$.

\begin{definition}
Let $\cat{A}$ be a symmetric monoidal (infinity) infinity category.
Let $M$ be a manifold.
Then a \kore{locally constant factorization algebra} on $M$ in
$\cat{A}$ is an algebra in $\cat{A}$ over $\Elu_M$.
\end{definition}
If $\cat{A}$ is an $n$-category, then algebras in $\cat{A}$ form a
$n$-category.

The first thing to note is that the underlying $1$-category of the
$n$-category of
factorization algebras in $\cat{A}$ is just the category of algebras
in the underlying $1$-category of $\cat{A}$.

In order to understand the structure of the $n$-category of
factorization algebras, we would like to see that
Theorem \ref{thm:lurie-multilocalization} holds in this context, for
the $n$-categories of algebras.
It suffices to set $n=\infty$.

\begin{theorem}\label{thm:infinity-multilocalization}
Restriction through the morphism $\Disk(M)\to\Elu_M$ induces a fully
faithful functor between the (infinity) infinity categories of
algebras on these multicategories, valued in a symmetric monoidal
infinity category $\cat{A}$.
The essential image of the functor consists precisely of the locally
constant algebras on $\Disk(M)$.
\end{theorem}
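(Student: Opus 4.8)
The plan is to deduce the statement formally from its $(\infty,1)$-categorical version, Theorem~\ref{thm:lurie-multilocalization}, by a cotensoring argument, so that essentially no new input beyond that theorem is needed. Write $\Alg_{\Elu_M}(\cat{A})$ and $\Alg_{\Disk(M)}(\cat{A})$ for the $(\infty,\infty)$-categories of algebras on the indicated multicategories, and $\Alg^\loc_{\Disk(M)}(\cat{A})\sub\Alg_{\Disk(M)}(\cat{A})$ for the full $(\infty,\infty)$-subcategory spanned by the locally constant ones; the assertion to be proved is the same as saying that restriction along $\Disk(M)\to\Elu_M$ is an equivalence $\Alg_{\Elu_M}(\cat{A})\equivto\Alg^\loc_{\Disk(M)}(\cat{A})$. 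Two general facts will be used throughout. First, as already noted before the statement, for a multicategory $\operad{O}$ the underlying $1$-category of $\Alg_{\operad{O}}(\cat{A})$ is $\Alg_{\operad{O}}$ of the underlying $1$-category of $\cat{A}$ (a multicategory detects only the underlying $1$-categorical structure of its symmetric monoidal target); consequently the maximal subgroupoid of $\Alg_{\operad{O}}(\cat{A})$, which coincides with that of its underlying $1$-category, is the space of $\operad{O}$-algebras in the underlying $1$-category of $\cat{A}$. Second, for any $(\infty,\infty)$-category $\cat{K}$ the functor category $\Fun(\cat{K},\cat{A})$ carries a pointwise symmetric monoidal structure, and its formation commutes with $\Alg_{\operad{O}}$, i.e.\ $\Fun(\cat{K},\Alg_{\operad{O}}(\cat{A}))\equivwith\Alg_{\operad{O}}(\Fun(\cat{K},\cat{A}))$; this holds by the same formal reasoning as in the $(\infty,1)$-case, $\Fun(\cat{K},-)$ being a limit-preserving lax symmetric monoidal right adjoint and $\Alg_{\operad{O}}$ being built from limits.

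The description of the essential image is immediate from the first fact. Objects of $\Alg_{\Disk(M)}(\cat{A})$, and equivalences between them, are the same data as in its underlying $1$-category, namely $\Alg_{\Disk(M)}$ of the underlying $1$-category of $\cat{A}$; local constancy, being a condition on the underlying functor, is likewise detected there, since a $1$-morphism of $\cat{A}$ is an equivalence exactly when its image in the underlying $1$-category is. Hence both the class of objects in the essential image of restriction and the class of locally constant objects are computed in the $(\infty,1)$-categorical situation, where they agree by Theorem~\ref{thm:lurie-multilocalization}; so the essential image is exactly $\Alg^\loc_{\Disk(M)}(\cat{A})$.

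It remains to show that restriction $\Alg_{\Elu_M}(\cat{A})\to\Alg^\loc_{\Disk(M)}(\cat{A})$ is an equivalence of $(\infty,\infty)$-categories. Since, by the Yoneda lemma, a functor of $(\infty,\infty)$-categories is an equivalence once it induces an equivalence on $\Map(\cat{K},-)$ for every $(\infty,\infty)$-category $\cat{K}$, it suffices to fix $\cat{K}$ and show that $\Map(\cat{K},\Alg_{\Elu_M}(\cat{A}))\to\Map(\cat{K},\Alg^\loc_{\Disk(M)}(\cat{A}))$ is an equivalence. Let $\cat{B}$ denote the underlying $1$-category of the symmetric monoidal $(\infty,\infty)$-category $\Fun(\cat{K},\cat{A})$. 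By the second fact, $\Map(\cat{K},\Alg_{\Elu_M}(\cat{A}))$ is the maximal subgroupoid of $\Fun(\cat{K},\Alg_{\Elu_M}(\cat{A}))\equivwith\Alg_{\Elu_M}(\Fun(\cat{K},\cat{A}))$, hence, by the first fact, the space of $\Elu_M$-algebras in $\cat{B}$. On the other side, a functor $\cat{K}\to\Alg_{\Disk(M)}(\cat{A})$ factors through the full subcategory $\Alg^\loc_{\Disk(M)}(\cat{A})$ exactly when it carries every object of $\cat{K}$ to a locally constant algebra; since equivalences in $\Fun(\cat{K},\cat{A})$ are detected by evaluation at the objects of $\cat{K}$, this is equivalent to the corresponding $\Disk(M)$-algebra in $\Fun(\cat{K},\cat{A})$ being locally constant, so $\Map(\cat{K},\Alg^\loc_{\Disk(M)}(\cat{A}))$ is the space of locally constant $\Disk(M)$-algebras in $\cat{B}$. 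Now Theorem~\ref{thm:lurie-multilocalization}, applied to the symmetric monoidal $1$-category $\cat{B}$, asserts that restriction is an equivalence between these two categories of algebras, in particular on maximal subgroupoids; this yields the required equivalence of mapping spaces, naturally in $\cat{K}$, and finishes the proof.

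The main obstacle is not a new argument but the foundational framework that the above presupposes: one needs a theory of symmetric monoidal $(\infty,\infty)$-categories (equivalently, of $(\infty,\infty)$-operads) in which (i) $\Fun(\cat{K},\cat{A})$ has a pointwise symmetric monoidal structure and $\Alg_{\operad{O}}$ commutes with $\Fun(\cat{K},-)$, (ii) the underlying $1$-category of $\Alg_{\operad{O}}(\cat{A})$ is $\Alg_{\operad{O}}$ of the underlying $1$-category of $\cat{A}$, and (iii) the Yoneda principle holds and the maximal subgroupoid of an $(\infty,\infty)$-category agrees with that of its underlying $1$-category. Granting these, the theorem reduces formally to Theorem~\ref{thm:lurie-multilocalization}, which is why (as remarked) it was enough to treat $n=\infty$.
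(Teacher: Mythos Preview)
Your argument is correct and takes a genuinely different route from the paper's. The paper proves the theorem via Proposition~\ref{prop:infinity-lurie-inversion}, which proceeds by induction on the categorical dimension $n$: assuming the statement for $(n-1)$-categorical targets, it establishes full faithfulness at level $n$ by constructing, for each pair $A,B$ of algebras, an $(n-1)$-dimensional multicategory $\Map(A,B)$ (a Day-convolution-style object) over $\cat{O}$ whose algebras over the identity compute the mapping $(n-1)$-category; a Cartesian square then reduces the comparison of mapping categories to the inductive hypothesis. Your approach instead collapses the induction into a single cotensoring step: testing the functor against $\Map(\cat{K},-)$ for arbitrary $\cat{K}$, using $\Fun(\cat{K},\Alg_{\operad{O}}(\cat{A}))\equivwith\Alg_{\operad{O}}(\Fun(\cat{K},\cat{A}))$ together with the fact that maximal subgroupoids are computed in the underlying $1$-category, and then invoking Theorem~\ref{thm:lurie-multilocalization} for the symmetric monoidal $1$-category underlying $\Fun(\cat{K},\cat{A})$.

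Each approach trades off different foundational commitments. The paper's inductive Day-convolution argument is more self-contained once one has the $(n-1)$-multicategory $\Map(A,B)$ in hand, and it exposes the mechanism by which higher morphisms are controlled. Your argument is shorter and more formal, but relies on a package of facts about symmetric monoidal $(\infty,\infty)$-categories (pointwise monoidal structures on functor categories, commutation of $\Alg_{\operad{O}}$ with $\Fun(\cat{K},-)$, and compatibility of $\Alg_{\operad{O}}$ with passage to underlying $1$-categories) that you rightly flag as the real content; the paper itself only asserts the last of these. If that framework is in place, your reduction is certainly the cleaner one.
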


In order to explain the proof this theorem, let us first review the
proof of Theorem \ref{thm:lurie-multilocalization}.
It follows from Theorem \ref{thm:lurie-approximation} and
Lemma \ref{lem:assumptions-satisfied} below.

The first theorem is as follows.
We shall comment on the undefined terms in it after we complete the
statement.
\begin{theorem}[A special case of Theorem 2.3.3.23 of
  \cite{higher-alg}]
  \label{thm:lurie-approximation}
Let $\cat{C}$ and $\cat{O}$ be multicategories, and assume that the
category of colours of $\cat{O}$ is a groupoid.
Let $f\colon\cat{C}\to\cat{O}$ be a morphism, and assume that it is a
\textbf{weak approximation}, and induces a homotopy equivalence on the
classifying spaces of the categories of colours.
Then, for every multicategory $\cat{A}$, the functor
\[
f^*\colon\Alg_\cat{O}(\cat{A})\longto\Alg_\cat{C}(\cat{A})
\]
induces an equivalence
$\Alg_\cat{O}(\cat{A})\equivto\Alg^\loc_\cat{C}(\cat{A})$,
where $\Alg^\loc$ here denotes the category of
\textbf{locally constant} algebras, and $\Alg$ denotes the category of
not necessarily locally constant algebras.
\end{theorem}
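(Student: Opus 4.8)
The plan is to prove that $f^*$ lands in the full subcategory $\Alg^\loc_\cat{C}(\cat{A})$ of locally constant algebras and restricts there to an equivalence. First I would dispatch the easy half: since the colours of $\cat{O}$ form a groupoid, every morphism between colours of $\cat{O}$ is invertible, so for any algebra $A$ on $\cat{O}$ the underlying functor of $f^*A=A\circ f$ carries each unary morphism of $\cat{C}$ (whose image is a unary morphism of $\cat{O}$) to an equivalence. Thus $f^*$ indeed factors through $\Alg^\loc_\cat{C}(\cat{A})$, and it remains to prove that the corestricted functor $f^*\colon\Alg_\cat{O}(\cat{A})\longto\Alg^\loc_\cat{C}(\cat{A})$ is an equivalence. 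Conceptually I would exhibit a quasi-inverse by operadic left Kan extension $f_!$ along $f$, so that full faithfulness of $f^*$ becomes the assertion that the counit $f_!f^*\to\id$ is an equivalence, and the identification of the essential image becomes the assertion that the unit $B\to f^*f_!B$ is an equivalence exactly for the locally constant $B$.

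Two layers then have to be controlled: the underlying functors on colours, and the operadic (multimorphism) coherences. For the first layer I would use the hypothesis that $f$ induces a homotopy equivalence $B(\text{colours of }\cat{C})\equivto\text{colours of }\cat{O}$ on classifying spaces. Since the colours of $\cat{O}$ already form a groupoid, this exhibits them as the localization of the colours of $\cat{C}$ obtained by inverting every morphism. Consequently, underlying functors on the colours of $\cat{O}$ correspond precisely to \emph{locally constant} functors on the colours of $\cat{C}$, which pins down both the full faithfulness and the essential image at the level of underlying functors for free.

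The crux is the second layer, and this is exactly where the \textbf{weak approximation} hypothesis is used. Its content is that, for each colour $C$ of $\cat{C}$ and each active operation of $\cat{O}$ with target $f(C)$, the infinity category of active operations of $\cat{C}$ lying over it is weakly contractible. I would translate this contractibility into the statement that the multiplication data demanded of an $\cat{O}$-algebra is reconstructed, uniquely up to a contractible space of choices, from the data of a locally constant $\cat{C}$-algebra: the comparison maps computing $\Map_{\Alg_\cat{O}}(A,B)\to\Map_{\Alg^\loc_\cat{C}}(f^*A,f^*B)$ are equivalences (full faithfulness), and the extension of a locally constant $B$ genuinely preserves inert morphisms and all higher coherences so as to define an $\cat{O}$-algebra with $f^*f_!B\equivto B$ (essential surjectivity). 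The point that makes this work for an \emph{arbitrary} target $\cat{A}$, with no assumption of colimits, is that the (co)limits appearing in these operadic Kan-extension formulas are indexed by the weakly contractible categories supplied by the approximation hypothesis; over such a category the relevant diagram is essentially constant, so the colimit is computed by evaluation and no cocompleteness of $\cat{A}$ is needed.

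The main obstacle I anticipate is precisely this operadic bookkeeping: it is routine that the underlying functors match up, but verifying that the reconstructed multiplications assemble into a bona fide map of multicategories---preserving inert morphisms and cohering at every arity---is the delicate step, and it rests entirely on the weak contractibility built into the definition of a weak approximation. I would therefore organize the write-up so that full faithfulness and essential surjectivity are each reduced to this single contractibility input, which is the substance of the cited special case of Lurie's Theorem 2.3.3.23; matching our hypotheses (groupoidal colours of $\cat{O}$, the classifying-space equivalence on colours, and $f$ a weak approximation) to the hypotheses there then completes the proof.
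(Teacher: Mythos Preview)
The paper does not prove this statement: it is quoted verbatim as a special case of Lurie's Theorem 2.3.3.23 in \cite{higher-alg}, and the paper explicitly declines even to define the term ``weak approximation'' (writing ``We do not need to explain the term `weak approximation', since we just quote the following'', namely Lemma \ref{lem:assumptions-satisfied}). So there is no proof in the paper to compare your proposal against.

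As for your proposal itself, the first two paragraphs are a reasonable orientation, but the last paragraph reveals a circularity: you identify the hard step as the operadic coherence bookkeeping, say it ``is the substance of the cited special case of Lurie's Theorem 2.3.3.23'', and then propose to finish by ``matching our hypotheses \ldots\ to the hypotheses there''. But the statement you are asked to prove \emph{is} that special case of Theorem 2.3.3.23, so this is not a proof but a restatement of the citation. If your intent is genuinely to reprove Lurie's result, you would need to actually carry out the analysis of operadic left Kan extension along a weak approximation (in particular, verify that the relevant fibres of the active-morphism comparison are weakly contractible and that this suffices to make $f_!$ land in honest $\cat{O}$-algebras without cocompleteness assumptions on $\cat{A}$); your sketch gestures at this but does not supply it. If instead your intent is simply to record why the hypotheses here match Lurie's, then the entire middle of the proposal is superfluous and the ``proof'' is just the citation---which is exactly what the paper does.
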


The \kore{local constancy} here means that the underlying functor of
the algebra inverts all (unary) morphisms between colours.
We do not need to explain the term ``\kore{weak approximation}'',
since we just quote the following.

\begin{lemma}[Lemma 5.2.4.10, 11 of \cite{higher-alg}]
\label{lem:assumptions-satisfied}
The assumptions on $f$ of Theorem \ref{thm:lurie-approximation} are
satisfied by the map $\Disk(M)\to\Elu_M$.
\end{lemma}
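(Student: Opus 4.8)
Because this lemma is quoted as \cite[Lemmas 5.2.4.10, 5.2.4.11]{higher-alg}, the ultimate plan is to defer to Lurie; but let me record what the two required properties of $f\colon\Disk(M)\to\Elu_M$ amount to, since one of them follows at once from material already at hand. The first point I would make is that the underlying colour groupoid of $\Elu_M$ is nothing but $\ccat{D}_1(M)$, the full subgroupoid of single disks inside $\ccat{D}(M)$. Indeed, an object on either side is a single disk in $M$, and a unary multimap $\{U\}\to V$ of $\Elu_M$ is an embedding $U\into V$ together with an isotopy to the defining inclusion, which is exactly a map $U\to V$ in $\ccat{D}(M)$; by the characterization of equivalences in $\ccat{D}(M)$ recalled before Lemma \ref{lem:cofinality-with-label}, every such single-disk map is an equivalence, so these maps are precisely the morphisms of $\ccat{D}_1(M)$.

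Granting this identification, the condition that $f$ induce an equivalence on classifying spaces of colours becomes the assertion that $B\Disk(M)\to\ccat{D}_1(M)$ is an equivalence. Since $\Disk(M)=\Dis_1(M)$, this is exactly the case $\lvert S\rvert=1$ of Lemma \ref{lem:cofinality-with-label}, which asserts that $\Dis_S(M)\to\ccat{D}_S(M)$ is cofinal, combined with Lemma \ref{lem:standard-cofinality}, by which a functor into a groupoid is cofinal precisely when it induces an equivalence of the classifying space onto that groupoid. So this assumption requires no new argument.

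It then remains to check that $f$ is a weak approximation. Unwinding the definition, the content is a contractibility statement for spaces of active multimorphisms of $\Elu_M$ lying over the colours, which are to be ``approximated'' by the discrete, contractible-or-empty, spaces of disjoint inclusions modeled by $\Disk(M)$. The geometric engine is the isotopy extension theorem: given disks $U_1,\dots,U_k$ and $V$ in $M$, an embedding $\coprod_i U_i\into V$, and isotopies carrying each defining inclusion $U_i\into M$ to this embedding, one promotes the isotopy data to an ambient isotopy of $V$ pushing the images into a standard pairwise disjoint family of subdisks of $V$. This identifies the relevant multimap space of $\Elu_M$ with a space assembled from a configuration space of $V$ and copies of the single-disk embedding space $\Emb(\R^n,V)$, whose homotopy types are exactly those analysed in the proof of Proposition \ref{prop:van-kampen-cofinality} (each $\Emb(\R^n,V)$ being equivalent to $V$ through germs of charts). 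The weak contractibility demanded by the approximation condition then reduces to the contractibility facts for these embedding and configuration spaces already exploited in Section \ref{sec:isotopy-invariance}.

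The main obstacle is this last step, and within it the difficulty is formal rather than geometric: one must match Lurie's operadic definition of weak approximation (phrased through active morphisms and the associated lifting problems in the $\infty$-operad formalism) against the picture of embeddings and isotopies above, and verify the requisite contractibility uniformly across all arities. Since every geometric input needed, namely isotopy extension together with the homotopy types of the embedding and configuration spaces, is precisely what Section \ref{sec:isotopy-invariance} already supplies, in practice I would invoke \cite[Lemmas 5.2.4.10, 5.2.4.11]{higher-alg} rather than reproduce this bookkeeping.
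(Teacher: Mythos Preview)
Your proposal is correct, and in fact it does more than the paper: the paper gives no argument at all for this lemma, treating it purely as a citation of \cite[Lemmas 5.2.4.10, 5.2.4.11]{higher-alg}. Your observation that the classifying-space condition is already a consequence of the paper's own Lemma~\ref{lem:cofinality-with-label} (in the case $\lvert S\rvert=1$) together with Lemma~\ref{lem:standard-cofinality} is a genuine addition: it shows that this half of the lemma requires nothing beyond what the paper has already established independently of Lurie's approximation machinery. Your sketch of the weak-approximation half correctly identifies the geometric inputs (isotopy extension, the homotopy type of $\Emb(\R^n,V)$ via germs of charts, configuration spaces) and is consistent with how Lurie's argument actually proceeds; deferring the operadic bookkeeping to \cite{higher-alg} is exactly what the paper does for the whole lemma.
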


Thus, Theorem \ref{thm:lurie-multilocalization} extends to
Theorem \ref{thm:infinity-multilocalization} once we prove the
following.
\begin{proposition}\label{prop:infinity-lurie-inversion}
Let $\cat{C}$ and $\cat{O}$ be multicategories, and let
$f\colon\cat{C}\to\cat{O}$ be a morphism.
Assume that $f$ satisfies the \textbf{conclusion} of
Theorem \ref{thm:lurie-approximation} (for example,
by satisfying its assumptions).
Then the conclusion of the same theorem is true for any
\textbf{infinity} multicategory $\cat{A}$, instead of just
$1$-dimensional $\cat{A}$ (so an equivalence of (infinity)
\textbf{infinity} categories is the claimed conclusion).
\end{proposition}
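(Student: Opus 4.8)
The plan is to bootstrap from the $1$-categorical statement (Theorem \ref{thm:lurie-approximation}, hence Theorem \ref{thm:lurie-multilocalization}) by testing the desired equivalence on mapping spaces, using the fact that an algebra in an infinity multicategory $\cat{A}$ is itself a kind of ``point'' whose moduli is controlled by a $1$-categorical construction. Concretely, I would first reduce to comparing, for any two infinity multicategories (or rather, for $\cat{A}$ as above and an auxiliary parameter), the spaces $\Map_{\Alg_\cat{O}(\cat{A})}(-,-)$ and $\Map_{\Alg^\loc_\cat{C}(\cat{A})}(-,-)$, together with the claim that $f^*$ is essentially surjective onto the locally constant algebras. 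For essential surjectivity, I would observe that whether an algebra $\cat{C}\to\cat{A}$ factors (up to coherent homotopy) through $\cat{O}$ is detected already by the underlying map of simplicial sets / on the level of the homotopy category, which is the content of the $1$-categorical Theorem \ref{thm:lurie-approximation} applied with $\cat{A}$ replaced by a suitable truncation or by $\cat{A}$ itself viewed as a $1$-multicategory after discarding higher cells.

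The cleaner route, which I would actually pursue, is to exploit naturality in $\cat{A}$. The functor $\cat{A}\mapsto\Alg_\cat{D}(\cat{A})$ (for fixed $\cat{D}$) is a right adjoint — it is corepresented, at the level of underlying infinity groupoids, by the free monoidal-type construction on $\cat{D}$ — so it preserves limits and, crucially, cotensors by simplicial sets: $\Alg_\cat{D}(\cat{A}^{K})\simeq \Alg_\cat{D}(\cat{A})^{K}$ for a simplicial set $K$, and more generally $\Alg_\cat{D}(\Fun(\cat{B},\cat{A}))\simeq\Fun(\cat{B},\Alg_\cat{D}(\cat{A}))$. Applying Theorem \ref{thm:lurie-approximation} (valid for arbitrary $1$-dimensional targets, in particular for $\Fun(\cat{B},\cat{A})$ when $\cat{A}$ is $1$-dimensional) does not immediately help since we want higher $\cat{A}$; instead I would run the argument the other way. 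Recall that for infinity categories $\cat{X},\cat{Y}$, one has $\cat{X}\simeq\cat{Y}$ iff $\Map(\cat{Z},\cat{X})\simeq\Map(\cat{Z},\cat{Y})$ naturally for all $\cat{Z}$, and that mapping spaces in $\Alg_\cat{D}(\cat{A})$ are computed as an end / totalization built from the mapping spaces of $\cat{A}$. So the key step is: express both $\Map_{\Alg_\cat{O}(\cat{A})}(A,B)$ and $\Map_{\Alg_\cat{C}(\cat{A})}(f^*A,f^*B)$ as a limit (over the relevant twisted-arrow-type category of $\cat{O}$, resp.\ $\cat{C}$) of mapping spaces in $\cat{A}$, and show the comparison map between these limits is an equivalence using that $f$ induces the $1$-categorical equivalence after applying $\Map_\cat{A}(x,y)$-valued coefficients — which is precisely Theorem \ref{thm:lurie-approximation} with target the $1$-category of spaces (or an overcategory thereof encoding the bimodule $\Map_\cat{A}(A(-),B(-))$).

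In more detail, the steps in order: (1) reduce the claim to (a) full faithfulness of $f^*$ on infinity categories of algebras and (b) identification of the essential image with the locally constant algebras; (2) for (b), note that local constancy and the factorization property of an algebra $\cat{C}\to\cat{A}$ depend only on the underlying functor to the underlying $1$-category of $\cat{A}$, so this follows verbatim from Theorem \ref{thm:lurie-multilocalization}; (3) for (a), fix algebras $A,B$ on $\cat{O}$ and write $\Map_{\Alg_\cat{O}(\cat{A})}(A,B)$ as the totalization of a cosimplicial (or limit over twisted arrows of $\cat{O}$) space whose terms are products of mapping spaces $\Map_\cat{A}(A(\vec c),B(c'))$; (4) do the same for $\cat{C}$ with $f^*A,f^*B$; (5) observe that the map $f$ induces, for the ``coefficient system'' $(\vec c,c')\mapsto \Map_\cat{A}(A(\vec c),B(c'))$ on $\cat{O}$, a pullback to $\cat{C}$, and that the comparison of the two totalizations is an instance of the $1$-categorical statement (Theorem \ref{thm:lurie-approximation} / its proof via weak approximation) applied with the space-valued target — here one uses that $f$ is a weak approximation and a classifying-space equivalence on colours, exactly the hypotheses already verified in Lemma \ref{lem:assumptions-satisfied}, now feeding into the \emph{same} cofinality/approximation lemma but at the level of the diagram shapes rather than the target; (6) conclude the comparison map on mapping spaces is an equivalence, hence $f^*$ is fully faithful, and combine with (2).

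The main obstacle I expect is step (5): making precise that the ``weak approximation'' property of $f$, which in Lurie's $1$-categorical proof is used to compare categories of algebras into a fixed $1$-multicategory, can be re-deployed to compare the \emph{shapes} of the limit diagrams computing mapping spaces — i.e.\ that the relevant functor between twisted-arrow-type categories (or between the $\cat{O}$- and $\cat{C}$-indexed diagrams) is cofinal/coinitial. This is morally what Proposition \ref{prop:infinity-lurie-inversion} is asserting, so the real work is extracting from the \emph{proof} of Theorem \ref{thm:lurie-approximation} a statement about diagram shapes rather than quoting only its conclusion; if that proof already proceeds by establishing such a cofinality (which is typical for ``approximation'' arguments), then the passage to infinity-categorical targets is formal, since cofinality of a functor is a property insensitive to the (co)dimension of the target of the diagram.
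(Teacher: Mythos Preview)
Your overall strategy—reduce to full faithfulness plus essential surjectivity, with essential surjectivity handled at the level of underlying $1$-categories—matches the paper's. But the paper's execution of full faithfulness is quite different from yours, and yours has a gap exactly where you flag it.

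Your step (5) asks for a cofinality statement about the map $f$ induces on twisted-arrow-type diagrams. As you note, this would have to be extracted from the \emph{proof} of Theorem \ref{thm:lurie-approximation}, not its conclusion. But the proposition's hypothesis is precisely only the conclusion: $f^*$ is an equivalence $\Alg_\cat{O}(\cat{A})\equivto\Alg^\loc_\cat{C}(\cat{A})$ for every $1$-dimensional target $\cat{A}$. So your argument, as written, does not prove the proposition as stated; it would at best prove a variant under the stronger ``weak approximation'' hypothesis, and even then you would need to verify that Lurie's proof actually produces a cofinality of the relevant diagram shapes, which is not quoted anywhere in the paper.

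The paper avoids this entirely by an induction on $n$ using a Day-convolution trick. Given $A,B\in\Alg_\cat{O}(\cat{A})$ with $\cat{A}$ an $n$-multicategory, one builds an $(n-1)$-dimensional multicategory $\Map(A,B)$ over $\cat{O}$ whose objects are pairs $(x,\phi)$ with $x\in\cat{O}$ and $\phi\colon A(x)\to B(x)$, and whose multimaps are equalizers encoding commuting squares. Then $\Map_{\Alg_\cat{O}(\cat{A})}(A,B)$ is the fibre of $\Alg_\cat{O}(\Map(A,B))\to\Alg_\cat{O}(\cat{O})$ over $\id$, and the pullback $\Map(A,B)\times_\cat{O}\cat{C}$ is exactly $\Map(f^*A,f^*B)$. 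Applying the inductive hypothesis to the $(n-1)$-dimensional target $\Map(A,B)$ (and to $\cat{O}$ itself) yields the comparison of mapping $(n-1)$-categories. The point is that this uses only the \emph{conclusion} of Theorem \ref{thm:lurie-approximation}, iterated—no appeal to weak approximation or to the internal structure of Lurie's proof is needed. Your twisted-arrow limit heuristic is morally in the same direction, but the Day convolution packaging is what makes the induction close.
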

\begin{proof}\setcounter{proofsec}{0}
\proofsec
It suffices to prove, for every finite $n$, the conclusion for
$n$-dimensional $\cat{A}$.
We shall do this by induction on $n$.
Since we know that the conclusion is true at the level of the
underlying $1$-categories, it suffices to prove that the functor $f^*$
is fully faithful.

Thus, suppose $n\ge 2$, and let $A,\:B\in\Alg_\cat{O}(\cat{A})$.
We need to recall the \emph{Day convolution}.
Namely, we construct an ($n-1$)-dimensional multicategory which we
shall denote by $\Map(A,B)$, equipped with a morphism to $\cat{O}$, so
that the ($n-1$)-dimensional category
$\Map_{\Alg_\cat{O}(\cat{A})}(A,B)$, is by definition, the fibre over
the universal $\cat{O}$-algebra $\id\colon\cat{O}\to\cat{O}$, of the
induced functor $\Alg_\cat{O}(\Map(A,B))\to\Alg_{\cat{O}}(\cat{O})$.
(This is actually a slightly modification of Day's original
construction, which captures lax, rather than genuine, morphisms of
algebras.)

An object of $\Map(A,B)$ is a pair $(x,\phi)$, where $x$ is an object
(or a ``colour'') in $\cat{O}$ ($x\in\cat{O}$), and $\phi\colon
A(x)\to B(x)$ in $\cat{A}$.
Given a family $(x,\phi)=((x_s,\phi_s))_{s\in S}$ of objects indexed
by a finite set $S$, and an object $(y,\psi)$, we define the
($n-2$)-category of multimaps by the equalizer diagram
\begin{equation*}
\Map((x,\phi),(y,\psi))\longto\Map_\cat{O}(x,y)\mathrel{\substack{\displaystyle\longto\\
    \displaystyle\longto}}\Map_\cat{A}(A(x),B(y)),
\end{equation*}
where the two maps equalized are the composites
\[
\Map_\cat{O}(x,y)\xlongrightarrow{B}\Map_\cat{A}(B(x),B(y))
\xlongrightarrow{\phi^*}\Map_\cat{A}(A(x),B(y))
\]
and
\begin{equation*}
\Map_\cat{O}(x,y)\xlongrightarrow{A}\Map_\cat{A}(A(x),A(y))
\xlongrightarrow{\psi_*}\Map_\cat{A}(A(x),B(y)).
\end{equation*}

For example, a multimap $(x,\phi)\to(y,\psi)$ is a pair
$(\theta,\alpha)$, where $\theta\colon x\to y$ in $\cat{O}$, and
$\alpha\colon B(\theta)\phi\equivto\psi A(\theta)$ in
$\Map(A(x),B(y))$, filling the square
\[\begin{CD}
A(x)@>\phi>>B(x)\\
@V{A(\theta)}VV@VV{B(\theta)}V\\
A(y)@>\psi>>B(y).
\end{CD}\]

Note that $\Map((x,\phi),(y,\psi))$ is indeed an ($n-2$)-category
since every fibre of the functor $\Map((x,\phi),(y,\psi))\to\Map(x,y)$
is ($n-2$)-dimensional, where the base is $0$-dimensional.

The functor $\Map(A,B)\to\cat{O}$ is given on objects by
$(x,\phi)\mapsto x$, and on multimaps by the projection
$\Map((x,\phi),(y,\psi))\to\Map(x,y)$.

\proofsec
We shall denote $\Map(f^*A,f^*B)$ by $\Map_\cat{C}(A,B)$.
The following is immediate from the definitions.
\begin{lemma}
The canonical square of multicategories
\[\begin{CD}
\Map_\cat{C}(A,B)@>>>\cat{C}\\
@VVV@VV{f}V\\
\Map(A,B)@>>>\cat{O}
\end{CD}\]
is Cartesian.
\end{lemma}

\proofsec
We shall continue with the proof of Proposition.
We have already seen that it suffices to prove that the functor
\[
f^*\colon\Map_{\Alg_\cat{O}(\cat{A})}(A,B)\longto\Map_{\Alg_\cat{C}(\cat{A})}(A,B)
\]
is an equivalence.
Lemma above implies that the square
\[\begin{CD}
\Alg_\cat{C}(\Map_\cat{C}(A,B))@>>>\Alg_\cat{C}(\cat{C})\\
@VVV@VV{f_*}V\\
\Alg_\cat{C}(\Map(A,B))@>>>\Alg_\cat{C}(\cat{O})
\end{CD}\]
is Cartesian.

From this, and the definition of
$\Map_{\Alg_\cat{C}(\cat{A})}(A,B)$, we obtain a Cartesian square
\[\begin{tikzcd}
\Map_{\Alg_\cat{C}(\cat{A})}(A,B)\arrow{d}\arrow{r}&\Alg_\cat{C}(\Map(A,B))\times_{\Alg_\cat{C}(\cat{O})}\Alg^\loc_\cat{C}(\cat{O})\arrow{d}\\
*\arrow{r}{\text{at }f}&\Alg^\loc_\cat{C}(\cat{O}).
\end{tikzcd}\]

From the inductive hypothesis, we also obtain a Cartesian square
\[\begin{tikzcd}
\Map_{\Alg_\cat{O}(\cat{A})}(A,B)\arrow{d}\arrow{r}&\Alg^\loc_\cat{C}(\Map(A,B))\arrow{d}\\
*\arrow{r}{\text{at }f}&\Alg^\loc_\cat{C}(\cat{O}).
\end{tikzcd}\]

It follows that the square
\[\begin{tikzcd}
\Map_{\Alg_\cat{O}(\cat{A})}(A,B)\arrow{d}[swap]{f^*}\arrow{r}&\Alg^\loc_\cat{C}(\Map(A,B))\arrow{d}\\
\Map_{\Alg_\cat{C}(\cat{A})}(A,B)\arrow{r}&\Alg_\cat{C}(\Map(A,B))\times_{\Alg_\cat{C}(\cat{O})}\Alg^\loc_\cat{C}(\cat{O})
\end{tikzcd}\]
is Cartesian.

Since in this square, the vertical map on the right is an inclusion
between full subcategories of $\Alg_\cat{C}(\Map(A,B))$, it follows
that the vertical map on the left identifies its source with the full
subcategory of its target consisting of those maps of algebras which,
as an algebra in $\Map(A,B)$, is locally constant.

The desired result now follows since the definition of a map of
algebras implies that every map of locally constant $\cat{C}$-algebras
is indeed locally constant in this sense.
\end{proof}

\begin{definition}
Let $\cat{A}$ be a symmetric monoidal infinity category.
Then a \kore{prealgebra} on a manifold $M$ in $\cat{A}$ is an algebra
over $\Open(M)$, in $\cat{A}$.
We say that a prealgebra $A$ is \kore{locally constant} if the
restriction of $A$ to a functor on $\Disk(M)$ is locally constant.
\end{definition}

Our descent results in the case the target category was a
$1$-category, described a locally constant factorization algebra as
a prealgebra satisfying various local constancy and descent properties
relative to a factorizing cover or basis satisfying certain
hypotheses.
Recall that these results depended on cofinality of functors to
$\ccat{D}(M)$.
Now we would like to see if same proofs work in the case where the
target category is now a symmetric monoidal infinity category.
For example, we have proved that
Theorem \ref{thm:lurie-multilocalization} holds in this context.

However, only this is a non-trivial result actually, and all of our
other proofs work without any change.
Namely, all of our descent results hold if our target is a symmetric
monoidal infinity category which (or equivalently, whose underlying
symmetric monoidal $1$-category) satisfies assumptions of
Section \ref{sec:assumption-on-colimit}.

\subsubsection{}
Finally, let us generalize Theorem \ref{thm:general-notion-of-algebra}
to twisted algebras.
Thus, let $M$ be a manifold, and let a basis for the topology of $M$
be given as in Theorem \ref{thm:general-notion-of-algebra}, by a
symmetric monoidal functor $\psi\colon\cat{E}\to\Open(M)$, $i\mapsto
V_i$, equipped with all the data, and satisfying all the assumptions.
In particular, $V_i\in\Dis(M)$ for every $i\in\cat{E}$.

\begin{lemma}\label{lem:universal-inversion-if-cofinal}
For $i\in\cat{E}$, if the composite
\begin{equation}\label{eq:universal-inversion-if-cofinal}
\cat{E}_{/i}\longto\cat{E}_{/V_i}\xlongrightarrow{\psi}\Dis(V_i)\longto\ccat{D}(V_i)
\end{equation} 
is cofinal, then this functor $\cat{E}_{/i}\to\ccat{D}(V_i)$ is
universal among the functors from $\cat{E}_{/i}$ which invert maps
which are inverted in $\ccat{D}(V_i)$.
Namely, for any category $\cat{C}$, the restriction through
\eqref{eq:universal-inversion-if-cofinal},
\[
\Fun(\ccat{D}(V_i),\cat{C})\longto\Fun(\cat{E}_{/i},\cat{C}),
\]
is fully faithful with image consisting of functors
$\cat{E}_{/i}\to\cat{C}$ which invert maps in $\cat{E}_{/i}$ inverted
in $\ccat{D}(V_i)$.
\end{lemma}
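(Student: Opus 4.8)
The statement says exactly that the functor
\[
\iota\colon\cat{E}_{/i}\longto\ccat{D}(V_i)
\]
of \eqref{eq:universal-inversion-if-cofinal} --- which lands in $\ccat{D}(V_i)$ by Proposition \ref{prop:cofinality-of-inversion} --- exhibits $\ccat{D}(V_i)$ as the localization of $\cat{E}_{/i}$ at the class $W$ of morphisms which $\iota$ carries to equivalences. The plan is to prove this by repeating, at the level of the single piece $V_i$ of the basis, the argument from the proof of Theorem \ref{thm:general-notion-of-algebra}. First I would reduce to the cocomplete case: by the standard characterization of localizations it is enough to check that for every cocomplete $\cat{C}$, the left Kan extension $\iota_!$ along $\iota$ satisfies $\iota^*\iota_!G\equivto G$ for every $W$-inverting $G\colon\cat{E}_{/i}\to\cat{C}$, and $\iota_!\iota^*F\equivto F$ for every $F\colon\ccat{D}(V_i)\to\cat{C}$; a general $\cat{C}$ then follows by embedding it fully faithfully into a cocomplete category, as in the Remark after the proof of Theorem \ref{thm:general-notion-of-algebra} (with the roles of limits and colimits interchanged), once one knows that the colimits computed below stay inside $\cat{C}$. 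Since $\iota_!$ is computed by $\iota_!H(D)=\colim_{(\cat{E}_{/i})_{/D}}H$, where $(\cat{E}_{/i})_{/D}:=\cat{E}_{/i}\times_{\ccat{D}(V_i)}\ccat{D}(V_i)_{/D}$, both conditions reduce to an analysis of the comma categories $(\cat{E}_{/i})_{/D}$ for $D\in\ccat{D}(V_i)$, together with the essential surjectivity of $\iota$ (which follows from the l-niceness of $\psi_1$, every object of $\ccat{D}(V_i)$ being equivalent to some $V_j$).

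Second, I would carry out this analysis by decomposing each $D\in\ccat{D}(V_i)$ into its connected components $(D_s)_{s\in S}$. This is where the hypotheses on the factorizing basis enter, exactly as in the proof of Theorem \ref{thm:general-notion-of-algebra}: using the Cartesian square \eqref{eq:pull-back-hypothesis} of Hypothesis \ref{hy:monoidal-basis} --- in the form of the Cartesian squares of over- and under-categories deduced from it there --- together with Lemma \ref{lem:cofinality-with-label}, which trades the labelled poset $\Dis_S(V_i)$ for the groupoid $\ccat{D}_S(V_i)$, one identifies $(\cat{E}_{/i})_{/D}$, up to cofinality, with a product over $s\in S$ of suitably (double-)sliced comma categories of $\psi_1$ at the disks $D_s$, of the form $(\cat{E}_1)_{/D_s}$. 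Each factor $\psi_1\colon(\cat{E}_1)_{/D_s}\to\ccat{D}_1(D_s)$ is cofinal by the second bullet of Hypothesis \ref{hy:monoidal-basis} (see Remark \ref{rem:niceness-of-basis-by-disks}), hence has contractible classifying space; consequently, for a $W$-inverting $G$, the diagram $G$ on $(\cat{E}_{/i})_{/D}$ consists of equivalences between objects whose common value is the value of $G$ at the object corresponding to $D$ under the l-nice basis, and its colimit is that value. This yields $\iota^*\iota_!G\equivto G$, and the same bookkeeping applied to $F\colon\ccat{D}(V_i)\to\cat{C}$ yields $\iota_!\iota^*F\equivto F$; in particular the colimits remain inside $\cat{C}$, which justifies the reduction to the cocomplete case. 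The cofinality hypothesis on the composite is used throughout this comma-category analysis, to guarantee that these identifications and colimit computations are valid for \emph{all} $D$, including those not in the image of $\Dis(V_i)$.

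The main obstacle is precisely this step. Mere cofinality of $\iota$ does not make it a localization, so it is not enough to invoke the hypothesis directly; one really has to show that the colimits $\colim_{(\cat{E}_{/i})_{/D}}G$ are \emph{essentially constant}, computed by the value of $G$ at a single object, and uniformly so in $D$. Establishing this requires the full strength of the Cartesian-square hypotheses (to perform the connected-component decomposition) and the identification, via Lemma \ref{lem:cofinality-with-label}, of the groupoids $\ccat{D}_S(V_i)$ with the labelled configuration spaces of $V_i$, which is what converts the disk-by-disk l-niceness of $\psi_1$ into the required contractibility. Once these inputs are in place the remaining bookkeeping is routine and runs parallel to the corresponding bookkeeping in the proof of Theorem \ref{thm:general-notion-of-algebra}.
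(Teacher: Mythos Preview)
Your plan dualizes the paper's argument: you work with \emph{left} Kan extension and the \emph{over} categories $(\cat{E}_{/i})_{/D}$, whereas the paper (and the proof of Theorem~\ref{thm:general-notion-of-algebra} that you cite) works with \emph{right} Kan extension and the \emph{under} categories $(\cat{E}_{/i})_{U/}$. This reversal of variance is not innocuous; it breaks the key step.

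Your central claim, that $(\cat{E}_{/i})_{/D}$ is cofinally a product $\prod_{s\in S}(\cat{E}_1)_{/D_s}$, does not hold. An object of $(\cat{E}_{/i})_{/D}$ is a pair $(j,\,V_j\to D)$ with $j\le i$ in $\cat{E}$, and $V_j$ may have \emph{any} number of components, distributed arbitrarily among the $D_s$; there is no natural $S$-labelling on the source side. Already for $|S|=1$ the over category contains every $j\le i$ with $V_j$ embedding (up to isotopy) into the single disk $D$, and such $V_j$ may have several components. Restricting to $(\cat{E}_1)_{/D}$ is not cofinal, and more to the point the diagram $G$ on the full over category does \emph{not} consist of equivalences even when $G$ is $W$-inverting: an inclusion $j\to k$ with $V_j$ one disk and $V_k$ two disks, both inside $D$, is not sent to an equivalence in $\ccat{D}(V_i)$ and hence need not be inverted by $G$. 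So $\colim_{(\cat{E}_{/i})_{/D}}G$ cannot be read off as a single value of $G$ in the way you propose.

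The machinery you invoke (the Cartesian square~\eqref{eq:pull-back-hypothesis} of Hypothesis~\ref{hy:monoidal-basis}, Lemma~\ref{lem:cofinality-with-label}, and the reduction to $(\cat{E}_S)_{jU/}$) is designed for \emph{under} categories: a map $U=\Disj_SD_s\to V_j$ sends each $D_s$ into one component of $V_j$, which is precisely what allows passage to the $S$-labelled subcategory $((\cat{E}_S)_{/i})_{D/}\sub(\cat{E}_{/i})_{U/}$ and its product decomposition. The paper runs exactly this: after embedding $\cat{C}$ into a complete category it computes the \emph{right} Kan extension of $F\colon\cat{E}_{/i}\to\cat{C}$ at $U$ as $\lim_{((\cat{E}_S)_{/i})_{D/}}F$; for $W$-inverting $F$ this limit is constant with contractible index, by the same contractibility argument as in Theorem~\ref{thm:general-notion-of-algebra}. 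Replacing your colimits by limits and your over categories by under categories repairs the argument; the left-Kan route does not go through with the inputs you cite.
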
 

\begin{remark}\label{rem:condition-for-cofinality}
From Remark \ref{rem:niceness-of-basis-by-disks}, the assumption of
the cofinality follows if the first map
$\cat{E}_{/i}\to\cat{E}_{/V_i}$ of the composition
\eqref{eq:universal-inversion-if-cofinal} is cofinal, e.g., by being
an equivalence.
\end{remark} 

\begin{proof}[Proof of Lemma]
In order to show that the restriction functor is fully faithful, we
may first embed $\cat{C}$ by a fully faithful functor (e.g.~the Yoneda
embedding) into a category which has all small limits in it, and
show that the restriction functor is fully faithful for this larger target
category, in place of $\cat{C}$.
Therefore, we do not lose generality by assuming that $\cat{C}$
has all small limits in it, as we shall do.

In this case, an argument similar to the proof of Theorem
\ref{thm:general-notion-of-algebra} implies that the restriction
functor is the inclusion of a right localization of
$\Fun(\cat{E}_{/i},\cat{C})$.
Namely, if $U\in\ccat{D}(V_i)$ is of the form $\Disj_{s\in S}D_s$ for a
family $D=(D_s)_{s\in S}$ of disjoint disks indexed by a finite set
$S$, so $D\in\Dis_S(V_i)$, then we have
$\psi_S\colon(\cat{E}_S)_{/i}\to\Dis_S(V_i)$, and the resulting functor
$((\cat{E}_S)_{/i})_{D/}\to(\cat{E}_{/i})_{U/}$ is coinitial since it
has a right adjoint.
It follows that the right Kan extension of a functor
$F\in\Fun(\cat{E}_{/i},\cat{C})$ to $\ccat{D}(V_i)$ associates to $U$
the limit $\lim_{((\cat{E}_S)_{/i})_{D/}}F$.
The claim follows immediately from this, so we have proved the fully
faithfulness of the restriction functor.

The identification of the image of the embedding is then also
immediate.
\end{proof}

Let $M$ be a manifold, and let $\Dis_M$ denote $\Dis$ considered as
an algebra of categories on $\Disk(M)$.
Then in the $2$-category $\Alg_{\Disk(M)}(\Cat)$ of (not necessarily
locally constant) algebras of categories on $\Disk(M)$, $\Dis_M$
corepresents the functor $\cat{A}\mapsto\Alg_{\Disk(M)}(\cat{A})$.

Similarly, let $\ccat{D}_M$ denote $\ccat{D}$ as a (locally constant)
algebra on $\Disk(M)$.
The obvious functor $\Dis\to\ccat{D}$ is a map of algebras.
We obtain the following by applying Lemma to the basis $\Dis(M)$ for
the topology of $M$.

\begin{corollary}\label{cor:representing-locally-constant-algebras}
Let $M$ be a manifold, and let $\cat{A}$ be an algebra of categories
on $\Disk(M)$.
Then the restriction functor
\[
\Map_{\Alg_{\Disk(M)}}(\ccat{D}_M,\cat{A})\longto\Map_{\Alg_{\Disk(M)}}(\Dis_M,\cat{A})=\Alg_{\Disk(M)}(\cat{A})
\]
through the map $\Dis\to\ccat{D}$ is fully faithful, and the image
consists precisely of the locally constant algebras in $\cat{A}$.
\end{corollary}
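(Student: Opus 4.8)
The plan is to deduce this from Lemma \ref{lem:universal-inversion-if-cofinal}, applied colourwise over the multicategory $\Disk(M)$ to the basis $\cat{E}=\Dis(M)$ (with $\psi$ the full embedding $\Dis(M)\into\Open(M)$), and then to propagate its conclusion through the Day convolution description of mapping categories in $\Alg_{\Disk(M)}(\Cat)$, exactly as in the proof of Proposition \ref{prop:infinity-lurie-inversion}.

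First I would verify the hypothesis of Lemma \ref{lem:universal-inversion-if-cofinal} for each colour $D$ of $\Disk(M)$, regarded as an object of $\cat{E}=\Dis(M)$. Since $\psi$ is a full embedding and $V_D=D$, the first arrow $\cat{E}_{/D}\to\cat{E}_{/V_D}$ of \eqref{eq:universal-inversion-if-cofinal} is the identity of $\Dis(D)$, so the hypothesis holds by Remark \ref{rem:condition-for-cofinality}; equivalently the relevant composite is just the canonical functor $\Dis(D)\to\ccat{D}(D)$, cofinal by Proposition \ref{prop:cofinality-of-inversion}. The Lemma then gives, for every disk $D$ and every category $\cat{C}$, that restriction along $\Dis(D)\to\ccat{D}(D)$ is a fully faithful functor $\Fun(\ccat{D}(D),\cat{C})\to\Fun(\Dis(D),\cat{C})$ whose image consists of the functors $\Dis(D)\to\cat{C}$ inverting those maps of $\Dis(D)$ that become equivalences in $\ccat{D}(D)$, namely the inclusions that on each connected component are the inclusion of a single disk.

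Next I would globalize. As spelled out in the proof of Proposition \ref{prop:infinity-lurie-inversion}, for an algebra of categories $\cat{A}$ on $\Disk(M)$ the mapping category $\Map_{\Alg_{\Disk(M)}}(\Dis_M,\cat{A})=\Alg_{\Disk(M)}(\cat{A})$ is the fibre over $\id_{\Disk(M)}$ of $\Alg_{\Disk(M)}(\Map(\Dis_M,\cat{A}))\to\Alg_{\Disk(M)}(\Disk(M))$, where the Day convolution multicategory $\Map(\Dis_M,\cat{A})$ over $\Disk(M)$ has, over a colour $D$, the pairs $(D,\phi)$ with $\phi\colon\Dis(D)\to\cat{A}(D)$, and multimap spaces obtained as equalizers built from the functor categories $\Fun\bigl(\prod_s\Dis(D_s),\cat{A}(D)\bigr)$; likewise with $\ccat{D}_M$ and $\ccat{D}(D)$ in place of $\Dis_M$ and $\Dis(D)$. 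The map $\Dis_M\to\ccat{D}_M$ of algebras induces a morphism $\Map(\ccat{D}_M,\cat{A})\to\Map(\Dis_M,\cat{A})$ of multicategories over $\Disk(M)$ which, over each colour $D$, is precomposition with $\Dis(D)\to\ccat{D}(D)$ (and, on multimap spaces, with the product $\prod_s\Dis(D_s)\to\prod_s\ccat{D}(D_s)$); by the previous paragraph, applied with $\cat{C}=\cat{A}(D)$ and to such finite products, this morphism is fully faithful on colours and on multimap spaces, with image exactly the colours $(D,\phi)$ for which $\phi$ is locally constant. Since equalizers, products, $\Alg_{\Disk(M)}(-)$, and the fibre over $\id_{\Disk(M)}$ are all limits in $\Cat$ and in spaces, and full faithfulness (tested on mapping spaces) together with ``image a full subcategory cut out by a pointwise condition'' are inherited by limits of pointwise-fully-faithful maps, it follows that the restriction functor $\Map_{\Alg_{\Disk(M)}}(\ccat{D}_M,\cat{A})\to\Map_{\Alg_{\Disk(M)}}(\Dis_M,\cat{A})$ is fully faithful with image the maps of algebras that are locally constant colourwise. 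Finally, as in the last sentence of the proof of Proposition \ref{prop:infinity-lurie-inversion}, a map of locally constant algebras is automatically locally constant, so ``colourwise locally constant'' coincides with ``locally constant'' here, which completes the argument.

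I expect the globalization to be the main obstacle: one must check that the Day convolution descriptions of $\Map_{\Alg_{\Disk(M)}}(\Dis_M,\cat{A})$ and of $\Map_{\Alg_{\Disk(M)}}(\ccat{D}_M,\cat{A})$ are compatible with the map $\Dis_M\to\ccat{D}_M$ colourwise, and that ``colourwise fully faithful with a pointwise-prescribed image'' survives forming $\Alg_{\Disk(M)}(-)$ and passing to the fibre over $\id_{\Disk(M)}$. This is the same kind of bookkeeping already carried out for Proposition \ref{prop:infinity-lurie-inversion}, and it carries essentially all the content beyond Lemma \ref{lem:universal-inversion-if-cofinal}; the colourwise input, namely the application of Lemma \ref{lem:universal-inversion-if-cofinal}, is immediate.
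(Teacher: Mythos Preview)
Your approach is essentially the one the paper intends: the paper's proof is a single sentence (``apply Lemma to the basis $\Dis(M)$''), and you have supplied the implicit globalization step, reasonably choosing the Day convolution machinery already used for Proposition \ref{prop:infinity-lurie-inversion}. The verification of the hypothesis of Lemma \ref{lem:universal-inversion-if-cofinal} is exactly right, and your argument that $\Map(\ccat{D}_M,\cat{A})\to\Map(\Dis_M,\cat{A})$ is a full inclusion of multicategories over $\Disk(M)$ (because embedding the target of an equalizer diagram fully faithfully does not change the equalizer) is correct and is what makes the passage through $\Alg_{\Disk(M)}(-)$ and the fibre over $\id$ go through.

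There is one genuine inaccuracy in your final sentence. The appeal to the last line of the proof of Proposition \ref{prop:infinity-lurie-inversion} does not apply here: that argument concerned a map between two \emph{locally constant} algebras, whereas $\Dis_M$ is not locally constant, and $\cat{A}$ is not assumed to be. What you actually need is that, for a map of algebras $F\colon\Dis_M\to\cat{A}$ corresponding to $A\in\Alg_{\Disk(M)}(\cat{A})$, the condition ``each $F_{D'}\colon\Dis(D')\to\cat{A}(D')$ inverts componentwise inclusions'' is equivalent to ``$A$ inverts unary maps of $\Disk(M)$''. One direction is immediate (take a single component). For the other, use the compatibility square of $F$ for a multi-inclusion $\{U'_s\}\to D'$: it identifies $F_{D'}(\Disj_sU_s\to\Disj_sU'_s)$ with the image under the multimap $\cat{A}(\{U'_s\}\to D')$ of the tuple of maps $F_{U'_s}(U_s\to U'_s)$, each of which is an equivalence by local constancy of $A$. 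This replaces your appeal to Proposition \ref{prop:infinity-lurie-inversion} and completes the argument.
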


More generally, in our current situation as in Theorem
\ref{thm:general-notion-of-algebra}, let $\ccat{D}_{\cat{E}_1}$ denote
the restriction of $\ccat{D}_M$ through the functor
$\psi\colon\cat{E}_1\to\Disk(M)$ of multicategories (see Remark
\ref{rem:factorization-basis-by-multifunctor}).
Then Lemma \ref{lem:universal-inversion-if-cofinal} implies that if
the functor \eqref{eq:universal-inversion-if-cofinal} is cofinal for
every $i\in\cat{E}_1$, then $\ccat{D}_{\cat{E}_1}$ corepresents the
functor $\cat{A}\mapsto\Alg^\loc_{\cat{E}_1}(\cat{A})$ on
$\Alg_{\cat{E}_1}(\Cat)$.
As a consequence, we obtain the following, twisted version of Theorem
\ref{thm:general-notion-of-algebra}, from the $2$-categorical
generalization of Theorem \ref{thm:general-notion-of-algebra} (in the
case of the target $2$-category $\Cat$).

\begin{theorem}\label{thm:twisted-general-algebra}
Let $M$ be a manifold, and let $\cat{A}$ be a locally constant
factorization algebra of categories on $M$.
Then for a basis for the topology of $M$ as in Theorem
\ref{thm:general-notion-of-algebra}, if the functor
\eqref{eq:universal-inversion-if-cofinal} is cofinal for every
$i\in\cat{E}_1$, then the restriction functor
\[
\Alg_M(\cat{A})\longto\Alg^\loc_{\cat{E}_1}(\cat{A})
\]
is an equivalence.
\end{theorem}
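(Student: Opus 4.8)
The plan is to deduce the statement from the $2$-categorical generalization of Theorem \ref{thm:general-notion-of-algebra} — valid because, as noted after Proposition \ref{prop:infinity-lurie-inversion}, all of the descent results of this paper persist for a target symmetric monoidal infinity category satisfying the hypotheses of Section \ref{sec:assumption-on-colimit}, in particular for the Cartesian symmetric monoidal $2$-category $\Cat$ — together with the two corepresentability results established just before the theorem. Applying that generalization to the target $\Cat$, the restriction functor
\[
\psi^*\colon\Alg_M(\Cat)\longto\Alg^\loc_{\cat{E}_1}(\Cat)
\]
is an equivalence of $2$-categories, where $\Alg_M(\Cat)$ is identified, via Theorem \ref{thm:infinity-multilocalization}, with the full sub-$2$-category of locally constant algebras in $\Alg_{\Disk(M)}(\Cat)$. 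Under $\psi^*$ the given locally constant factorization algebra of categories $\cat{A}$ on $M$ (an object of $\Alg_M(\Cat)$) goes to $\cat{A}\resto{\cat{E}_1}$, while $\ccat{D}_M$ goes to $\ccat{D}_{\cat{E}_1}$, the latter by the definition of $\ccat{D}_{\cat{E}_1}$ as the restriction of $\ccat{D}_M$ along $\psi\colon\cat{E}_1\to\Disk(M)$.

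Next I would identify both sides of the asserted equivalence as mapping categories out of these corepresenting objects. By Corollary \ref{cor:representing-locally-constant-algebras} (together with Theorem \ref{thm:infinity-multilocalization}, which makes $\Alg_M(\Cat)\into\Alg_{\Disk(M)}(\Cat)$ fully faithful, so the relevant mapping categories agree), the category $\Map_{\Alg_M(\Cat)}(\ccat{D}_M,\cat{A})$ is precisely $\Alg_M(\cat{A})$. On the other side, the hypothesis of the present theorem — that the functor \eqref{eq:universal-inversion-if-cofinal} is cofinal for every $i\in\cat{E}_1$ — is exactly what Lemma \ref{lem:universal-inversion-if-cofinal} needs in order to conclude that $\ccat{D}_{\cat{E}_1}$ corepresents $\cat{B}\mapsto\Alg^\loc_{\cat{E}_1}(\cat{B})$ on $\Alg_{\cat{E}_1}(\Cat)$; since $\ccat{D}_{\cat{E}_1}$ and $\cat{A}\resto{\cat{E}_1}$ are both locally constant and the locally constant algebras form a full sub-$2$-category, this identifies $\Map_{\Alg^\loc_{\cat{E}_1}(\Cat)}(\ccat{D}_{\cat{E}_1},\cat{A}\resto{\cat{E}_1})$ with $\Alg^\loc_{\cat{E}_1}(\cat{A})$.

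Because $\psi^*$ is an equivalence of $2$-categories carrying $\ccat{D}_M\mapsto\ccat{D}_{\cat{E}_1}$ and $\cat{A}\mapsto\cat{A}\resto{\cat{E}_1}$, it induces an equivalence
\[
\Map_{\Alg_M(\Cat)}(\ccat{D}_M,\cat{A})\longequivto\Map_{\Alg^\loc_{\cat{E}_1}(\Cat)}(\ccat{D}_{\cat{E}_1},\cat{A}\resto{\cat{E}_1}),
\]
which under the identifications above is a functor $\Alg_M(\cat{A})\to\Alg^\loc_{\cat{E}_1}(\cat{A})$. The one point I expect to require genuine care — and hence the main obstacle — is verifying that this induced functor really is the restriction functor of the statement: that is, that the equivalence produced by $\psi^*$, transported through Corollary \ref{cor:representing-locally-constant-algebras} and its $\cat{E}_1$-analogue, is literal restriction of twisted algebras along $\cat{E}_1\to\Disk(M)$. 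This is a naturality check which follows once one observes that all the identifications involved are themselves given by restriction functors and are compatible with the structure map $\Dis\to\ccat{D}$ used in Corollary \ref{cor:representing-locally-constant-algebras}; no argument beyond assembling the quoted results is needed. In particular no hypothesis on sifted colimits in $\cat{A}$ enters anywhere — the only colimit conditions used are those on $\Cat$ itself, in the $\Cat$-valued instance of Theorem \ref{thm:general-notion-of-algebra}.
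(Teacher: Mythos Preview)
Your proposal is correct and follows essentially the same route as the paper: the paper also deduces the theorem from the $2$-categorical generalization of Theorem \ref{thm:general-notion-of-algebra} with target $\Cat$, combined with the corepresentability of $\Alg_M(-)$ by $\ccat{D}_M$ (Corollary \ref{cor:representing-locally-constant-algebras}) and of $\Alg^\loc_{\cat{E}_1}(-)$ by $\ccat{D}_{\cat{E}_1}$ (via Lemma \ref{lem:universal-inversion-if-cofinal}). Your write-up simply makes explicit the naturality check that the paper leaves implicit.
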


\begin{remark}\label{rem:assumption-for-twisted-algebra-theorem}
See Remark \ref{rem:condition-for-cofinality} for a sufficient
condition for the assumption here to be satisfied.
\end{remark}

\subsection{(Twisted) algebras on a (twisted) product}
\setcounter{subsubsection}{-1}

\subsubsection{}
 
We shall illustrate applications of
Theorem \ref{thm:general-notion-of-algebra} and its generalization
Theorem \ref{thm:twisted-general-algebra}.

Fix a target symmetric monoidal category satisfying the assumptions of
Section \ref{sec:assumption-on-colimit}, and drop the name of this
category from the notation.

\begin{theorem}\label{thm:prod-formula}
Let $B$, $F$ be manifolds.
Then, the restriction functor
\[
\Alg_{F\times B}\longto\Alg_B(\Alg_F)
\]
is an equivalence.
\end{theorem}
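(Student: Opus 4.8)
The plan is to establish the equivalence in two stages: first reduce, by descent, to the case where $F$ and $B$ are Euclidean, and then treat that case by a comparison of multicategories.

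For the reduction, note that $\Alg_F(\cat{A})$, equipped with its pointwise symmetric monoidal structure and pointwise sifted colimits, again satisfies the hypotheses of Section \ref{sec:assumption-on-colimit}; hence $\Alg_B(\Alg_F(\cat{A}))$ makes sense and, by Theorem \ref{thm:descent-of-categories-of-algebras}, $B\mapsto\Alg_B(\Alg_F(\cat{A}))$ is a sheaf of categories on $B$. Likewise $B\mapsto\Alg_{F\times B}(\cat{A})$ is a sheaf of categories, being the composite of the sheaf $M\mapsto\Alg_M(\cat{A})$ with $B\mapsto F\times B$, a functor that carries open covers of $B$ to open covers of $F\times B$ and intersections to intersections. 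The restriction functor of the theorem is natural in $B$ for open embeddings, so it is a morphism of sheaves of categories on $B$, and it suffices to prove it is an equivalence for $B$ a disk, $B\cong\R^b$. With $B\cong\R^b$ fixed, one runs the same argument in the variable $F$: both $F\mapsto\Alg_{F\times\R^b}(\cat{A})$ and $F\mapsto\Alg_{\R^b}(\Alg_F(\cat{A}))$ are sheaves of categories on $F$ --- the latter because $\Alg_{\R^b}(\blank)$, being an algebra (hence functor-category) construction, preserves all limits in its target --- and the restriction functor is again a morphism of sheaves in $F$; so we reduce to $F\cong\R^a$.

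It remains to prove $\Alg_{\R^{a+b}}(\cat{A})\simeq\Alg_{\R^b}(\Alg_{\R^a}(\cat{A}))$. Unwinding the right-hand side: by the very definition of algebras valued in a category of algebras, $\Alg_{\R^b}(\Alg_{\R^a}(\cat{A}))$ is the category of $\cat{A}$-valued algebras over the (Boardman--Vogt) tensor product $\Elu_{\R^b}\otimes\Elu_{\R^a}$, the two local-constancy conditions combining into the requirement that every unary map be inverted. Using Theorem \ref{thm:lurie-multilocalization} and Proposition \ref{prop:infinity-lurie-inversion} to pass between $\Disk(\R^k)$ and $\Elu_{\R^k}$, the assertion becomes that restriction along a natural morphism $\Elu_{\R^b}\otimes\Elu_{\R^a}\to\Elu_{\R^{a+b}}$ of multicategories is an equivalence on $\cat{A}$-valued algebras; this is the algebra-level form of Dunn's theorem \cite{dunn}. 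To prove it inside the present framework, rather than invoking Dunn's operad-level result, one verifies that this morphism is a weak approximation in the sense of Theorem \ref{thm:lurie-approximation} and induces an equivalence on the classifying spaces of the (contractible) categories of colours; the conclusion then follows from Theorem \ref{thm:lurie-approximation} together with Proposition \ref{prop:infinity-lurie-inversion}. Checking the weak-approximation property reduces to a computation with the configuration spaces of $\R^{a+b}$ --- morally, that a pairwise disjoint family, indexed by a finite set $S$, of ``boxes'' (each a product of a little cube in $\R^a$ with one in $\R^b$) inside a box, with every factor free to grow, has the homotopy type of $\Conf(S,\R^{a+b})$ --- of the same flavour as Lurie's Lemma \ref{lem:assumptions-satisfied} and the proof of Proposition \ref{prop:van-kampen-cofinality}.

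The main obstacle is this Euclidean step, which carries the real geometric content (and where Dunn's theorem is recovered). It is worth stressing why the general case does not reduce to a single application of Theorem \ref{thm:general-notion-of-algebra} with a ``product-disk'' basis: the open sets $D\times E$ with $D\in\Disk(F)$, $E\in\Disk(B)$ are too rigid to form an effectively factorizing l-nice basis --- already for $F=B=\R$, the products of intervals contained in an annular open subset of $\R^2$ form a codirected poset, whose classifying space is contractible rather than that annulus, so Hypothesis \ref{hy:monoidal-basis} fails --- which is why the descent reduction to the Euclidean case, via the sheaf property of $\Alg_M(\cat{A})$ (Theorem \ref{thm:descent-of-categories-of-algebras}), is built into the argument. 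Equivalently: after trivialising the bundle $F\times B\to B$ over charts of $B$, one only ever applies Theorem \ref{thm:general-notion-of-algebra} on Euclidean pieces, with target $\Alg_F(\cat{A})$, and so is again led to the configuration-space computation above.
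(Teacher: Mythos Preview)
Your annulus objection is mistaken, and in fact the paper does precisely what you claim cannot be done: it applies Theorem~\ref{thm:general-notion-of-algebra} directly to $M=F\times B$ with $\cat{E}_1=\Disk(F)\times\Disk(B)$ and $\cat{E}$ the full subposet of $\Dis(M)$ consisting of those $D$ each of whose components is a component of some $D''\times D'$ with $D'\in\Dis(B)$, $D''\in\Dis(F)$; the partial monoidal structure on $\cat{E}$ is defined so that the square \eqref{eq:pull-back-hypothesis} is Cartesian by construction. Once Hypothesis~\ref{hy:monoidal-basis} is verified, the identification $\Alg^\loc_\cat{E}\equivwith\Alg_B(\Alg_F)$ is essentially tautological, and Theorem~\ref{thm:general-notion-of-algebra} gives $\Alg_M\equivwith\Alg^\loc_\cat{E}$.

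The error in your counterexample is this: the poset of open rectangles contained in an annulus $U\sub\R^2$ is \emph{not} codirected (two disjoint rectangles have no common subrectangle), and its classifying space \emph{is} $U$. The condition on $\cat{E}_1$ in Hypothesis~\ref{hy:monoidal-basis} is the \emph{non-factorizing} effective l-niceness, which by Lurie's generalized Seifert--van Kampen theorem amounts to checking that, for each point $x\in U$, the subposet of rectangles in $U$ containing $x$ has contractible classifying space. \emph{That} subposet is codirected (intersections of rectangles through $x$ are rectangles through $x$), hence contractible. So Hypothesis~\ref{hy:monoidal-basis} holds, and no reduction to the Euclidean case is needed.

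Your alternative route---descent in $B$ and then in $F$ to reach $\R^a\times\R^b$, followed by verifying that $\Elu_{\R^a}\tensor\Elu_{\R^b}\to\Elu_{\R^{a+b}}$ is a weak approximation---is plausible in outline (and indeed the paper uses exactly such a descent reduction for the \emph{twisted} product formula, Proposition~\ref{prop:twisted-product-formula}, with the untwisted Theorem~\ref{thm:prod-formula} as local input). But you have not carried out the weak-approximation check; ``of the same flavour as'' Lemma~\ref{lem:assumptions-satisfied} is not a proof, and this step is precisely the content of Dunn additivity. As written, your argument trades a short, self-contained application of Theorem~\ref{thm:general-notion-of-algebra} for a reduction to a computation you leave undone.
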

\begin{proof}\setcounter{proofsec}{0}
\proofsec
Note that the category $\Alg_F$ has sifted colimits, and they are
preserved by the tensor product (since these are the same colimits and
tensor product on the underlying objects).

We would like to use Theorem \ref{thm:general-notion-of-algebra} on
$M:=F\times B$.
For this purpose, we consider the following basis for the topology of
$M$.

The basis will be indexed by the symmetric partially monoidal category
$\cat{E}$ to be defined as follows.
The underlying category of $\cat{E}$ will be as follows.
Its objects are any object $D$ of $\Dis(M)$ for which there
exists objects $D'$ of $\Dis(B)$ and $D''$ of $\Dis(F)$, such that any
component of $D$ is a component of $D'\times D''\sub M$.

Morphisms in $\cat{E}$ shall be just inclusions, so it is a full
subposet of $\Dis(M)$.
We denote the inclusion by $\psi\colon\cat{E}\into\Dis(M)$.
Note that this determines a factorizing l-nice (and hence effectively
factorizing l-nice by Proposition \ref{prop:van-kampen-cofinality})
basis of $M$.

The partial monoidal structure on $\cat{E}$ will be defined as
follows.
Namely, for any finite set $S$, let $\Dis(M)^{(S)}$ denote the full
subposet of the Cartesian product $\Dis(M)^S$ on which the disjoint
union operation to $\Dis(M)$ is defined.
Then we define the poset $\cat{E}^{(S)}$ by the Cartesian square
\begin{equation}\label{eq:domain-for-operation}
\begin{tikzcd}
\cat{E}^{(S)}\arrow{d}\arrow[hook]{r}&\Dis(M)^{(S)}\arrow{d}{\Disj_S}\\
\cat{E}\arrow[hook]{r}{\psi}&\Dis(M).
\end{tikzcd}
\end{equation} 
It is canonically a full subposet of $\cat{E}^S$, and we let it be
the domain of definition of the $S$-fold monoidal operation of
$\cat{E}$, where the operation is defined to be the left vertical map
on the square \eqref{eq:domain-for-operation}.
Since $\cat{E}$ is a poset, this determines a partial monoidal
structure on $\cat{E}$.

We define the full subposet $\cat{E}_1$ of $\cat{E}$ to be the
intersection $\cat{E}\intersect\Disk(M)$ taken in $\Dis(M)$.
(As a full subposet of $\Disk(M)$, $\cat{E}_1$ is
$\Disk(F)\times\Disk(B)$.)

For this factorizing l-nice basis of $M$, equipped with auxiliary
data required for Theorem \ref{thm:general-notion-of-algebra}, we
would like to verify that the Hypothesis \ref{hy:monoidal-basis} is
satisfied.
All but the hypothesis that
$\psi_1:=\psi\resto{\cat{E}_1}\colon\cat{E}_1\to\Open(M)$ determines
an effectively l-nice basis, are easily verified from the construction.
This remaining hypothesis follows from Lurie's generalized
Seifert--van Kampen Theorem, since it is immediate
to see that $\psi_1$ determines an l-nice basis.

\proofsec
Now Theorem \ref{thm:general-notion-of-algebra} implies that the
restriction functor $\Alg_M\to\Alg^\loc_\cat{E}$ is an equivalence,
where the target is the category of algebras on $\cat{E}$ \emph{which
  is locally constant} with respect to $\cat{E}_1$ in the sense that the
maps in $\cat{E}_1$ are all inverted.

However, the restriction functor $\Alg^\loc_\cat{E}\to\Alg_B(\Alg_F)$
is nearly tautologically (namely, up to introduction and elimination
of the unit objects and the unit operations as necessary) an
equivalence.

This completes the proof.
\end{proof}

For example, a locally constant factorization algebra on $\R^2$ is the
same as an associative algebra in the category of associative
algebras since a locally constant factorization algebra on
$\R^1$ can be directly seen to be the same as an associative algebra.

Inductively, a locally constant factorization algebra on $\R^n$ is an
iterated associative algebra object.

\begin{remark}\label{rem:dunn}
A product manifold $M=B\times F$ has another interesting factorizing
basis.
Namely, there is a factoring basis of $M$ consisting of the disjoint
unions of disks in $M$ of the form $D'\times D''$ for disks $D'$ in
$B$ and $D''$ in $F$.
As observed in Example \ref{ex:free-disjoint-union},
Theorem \ref{thm:general-notion-of-algebra} applies to the factorizing
basis freely generated by this basis.
The result we obtain is another description of the category
$\Alg_M$, namely as the category of `locally constant' algebras on
this factorizing basis.

Iterating this, one finds a description of the category of locally
constant algebras on $\R^n$ which identifies it essentially with
the category of algebras over Boardman--Vogt's ``little cubes''
\cite{bv}.
Therefore, Theorem \ref{thm:prod-formula} can be considered as a
generalization of a theorem of Dunn \cite{dunn}.
\end{remark}

\begin{remark}
Dunn's theorem actually identifies the $E_n$-operad with the $n$-fold
tensor product of the $E_1$-operad.
In particular, unlike our theorem in the case of $\R^n$, the target
category of the algebras need not satisfy our assumptions on sifted
colimits.
\end{remark}

\subsubsection{}
Theorem \ref{thm:prod-formula} identified the algebras on a product
manifold.
A product of manifolds has a twisted version, namely, a
fibre bundle.
Accordingly there is a generalization of
Theorem \ref{thm:prod-formula} which holds for a fibre bundle.
Let us formulate and prove it.

Let $p\colon E\to B$ be a smooth fibre bundle over a smooth base
manifold (i.e., a map with ``locally constant'' fibres).
Then we construct a locally constant algebra $\Alg_{E/B}$ of
categories on $B$ as follows.
Given an open disk $D\sub B$, let $\Alg_{E/B}(D)$ be
the category $\Alg_{E_x}$ for the unique (up to a contractible space
of choices) point $x\in D$.
Note that the manifold $E_x$ is unambiguously specified by $D$ in the
infinity groupoid of manifolds where the spaces of morphisms are
the spaces of diffeomorphisms.

An inclusion $D\into D'$ of disks in $B$ induces an equivalence
$\Alg_{E/B}(D)\to\Alg_{E/B}(D')$ of symmetric monoidal categories
(specified uniquely up to a contractible space of choices).
This association becomes an algebra on $\Disk(B)$ since given a
disjoint inclusion $\Disj_{s\in S}D_s\into D'$ of disks in $B$, then
we have a functor $\prod_{s\in S}\Alg_{E/B}(D_s)\to\Alg_{E/B}(D')$
defined as (the underlying functor of) the unique symmetric monoidal
functor extending the symmetric monoidal functors
$\Alg_{E/B}(D_s)\to\Alg_{E/B}(D')$.
This defines $\Alg_{E/B}$ as a locally constant algebra of
categories on $B$.

\subsubsection{}
Alternatively, given a disk $D\sub B$, consider a trivialization of
$p$ over $D$.
If $F$ is the typical fibre in the trivialization, then we define
$\Alg_{E/B}(D)$ to be $\Alg_{F}$.
A different trivialization with typical fibre $F'$ specify a
diffeomorphism $F\equivto F'$ uniquely up to a contractible space of
choices (we will have a family of diffeomorphisms parametrized by
$D$).
Moreover the specified (family of) diffeomorphisms satisfy the
cocycle condition.
This eliminates the ambiguity of $\Alg_{E/B}(D)$.

With a trivialization as above fixed, we shall call $F$ the
\kore{fibre over $D$} of $p$.

In this approach, the algebra structure of $\Alg_{E/B}$ is given by
the symmetric monoidal structure of $\Alg_F$.
Namely, if a disjoint inclusion $\Disj_{s\in S}D_s\into D'$ of disks
in $B$ is given, then a trivialization of $p$ over $D'$ restricts to a
trivialization over each $D_s$, and then all $\Alg_{E/B}(D_s)$ get
canonically identified with $\Alg_F=\Alg_{E/B}(D')$, where $F$ is the
fibre over $D'$ of $p$ with respect to the chosen trivialization, so
the monoidal operation $\Tensor_S\colon\Alg_F^S\to\Alg_F$ becomes the
desired operation
\[
\prod_{s\in S}\Alg_{E/B}(D_s)\longto\Alg_{E/B}(D').
\]

This is compatible with the structure of symmetric multicategory on
$\Disk(B)$ since restriction of trivializations clearly is.

\subsubsection{}
The relation of this approach to the previous approach is that a
trivialization of $p$ over a disk $D$ in $B$, gives an identification
of $E_x$, $x\in D$, with the fibre of $p$ over $D$.

\subsubsection{}
Next, we shall construct the ``restriction'' functor
$\Alg_E\to\Alg_B(\Alg_{E/B})$.
Given an algebra $A$ on $E$, we shall associate to it an object of
$\Alg_B(\Alg_{E/B})$ denoted by $A_{E/B}$ as follows.

Given an open disk $D\sub B$, we pick a trivialization of $p$ over
$D$, and denote by $q$ the projection $p^{-1}D\to F$ with respect to
the trivialization, where $F$ is the fibre of $p$ over $D$ (with
respect to the trivialization).
Then we define $A_{E/B}(D):=q_*i^*A\in\Alg_F=\Alg_{E/B}(D)$,
where $i\colon p^{-1}D\into E$ is the inclusion.

We need to check the well-definedness of this construction.
Recall that we identified different models of the fibre of $p$ over
$D$ by comparing the family $F\times D$ over $D$, for any one model
$F$, with the family $p^{-1}D$, by the trivialization making $F$ be a
model for the fibre over $D$.

Taking this into account, it is easy to see that, in order to
eliminate the ambiguity of the construction, it suffices to give
a path between the maps $q\times D\colon p^{-1}D\times D\to F\times
D$ and $p^{-1}D\times D\xrightarrow{\pr}p^{-1}D\equivwith F\times D$,
through locally trivial maps (maps with locally constant fibres)
$p^{-1}D\times D\to F\times D$.
Using the trivialization $p^{-1}D\equivwith F\times D$ again, this is
equivalent to giving a path between the two projections $F\times
D\times D\to F\times D$ through locally trivial maps.

We may instead choose a path between the two projections $D^2\to D$,
through locally trivial maps.
We pick an embedding of $D$ into a vector space as an open
\emph{convex} subdisk, which does not add more information than a
choice of a point from a contractible space.
Then we have a path of locally trivial maps $D^2\to D$.
\[
(x,y)\longmapsto x+t(y-x),\;0\le t\le 1.
\]
This clearly comes as a family over the said contractible space.

Let us now equip this association $D\mapsto q_*i^*A$ with a structure
of an algebra over $\Disk(B)$.
The construction is similar to the construction of the algebra
structure of $\Alg_{E/B}$, which we have made before.
Namely, if we are given an inclusion $D\into D'$ in $B$, where $D$ is
a disjoint union of disks, then a trivialization of $p$ over $D'$
restricts to a trivialization of $p$ over $D$, and thus we can try to
construct the desired map $A_{E/B}(D)\to A_{E/B}(D')$ as
$A[(q\resto{p^{-1}D})^{-1}(U)=q^{-1}(U)\cap p^{-1}D\into q^{-1}(U)]$
for disks $U\sub F$, $F$ the fibre over $D'$.

It remains to check that this construction is compatible with the
construction we have made to eliminate the ambiguity for the
association $D\mapsto A_{E/B}(D)$.
Again, assuming that $D'$ is an open convex subdisk of a vector space,
it does no harm to restrict $D$ to ones which are disjoint unions of
open \emph{convex} subdisks of $D'$ (convexity in the same vector
space).

Then the path of locally constant maps $(D')^2\to D'$ given above
restricts to a similar path on each component of $D$.
This verifies the compatibility of the constructions.

It follows that the construction above indeed defines an algebra
$A_{E/B}\in\Alg_B(\Alg_{E/B})$.

\begin{proposition}\label{prop:twisted-product-formula}
Let $p\colon E\to B$ be a smooth fibre bundle as above.
Then the restriction functor
\[
\Alg_E\longto\Alg_B(\Alg_{E/B})
\]
is an equivalence.
\end{proposition}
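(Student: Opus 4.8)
The plan is to imitate the proof of Theorem~\ref{thm:prod-formula}, one disk of $B$ at a time, and to glue the resulting equivalences by descent. First I would choose a basis $\cover{V}=\{B_s\}_{s\in S}$ for the topology of $B$ consisting of disks over which $p$ trivializes (passing, as in Example~\ref{ex:factorizing-cover}, to the associated cover closed under finite disjoint union, so that it is factorizing l-nice). Its pull-back $\{p^{-1}(B_s)\}_{s\in S}$ is then a factorizing l-nice cover of $E$, since $p^{-1}\colon\Open(B)\to\Open(E)$ is symmetric monoidal and a finite subset of $E$ has finite image in $B$. Writing $\cat{C}:=(\Simp_{/S})^\op$ and $\chi(I,s)=\Intersect_{i\in I}B_{s(i)}$ as in Example~\ref{ex:factorizing-cover}, Theorem~\ref{thm:descent-of-categories-of-algebras} identifies
\[
\Alg_E\;\equivwith\;\lim_{(I,s)\in\cat{C}}\Alg_{p^{-1}(\chi(I,s))},
\]
and each $\chi(I,s)$ lies inside a single $B_{s(i)}$, hence is an open over which $p$ trivializes with fibre constant on each component.

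For the right-hand side I would first record the twisted analogue of Theorem~\ref{thm:descent-of-categories-of-algebras}: for a locally constant factorization algebra $\cat{A}$ of categories on a manifold $N$, the presheaf $U\mapsto\Alg_U(\cat{A}\resto{U})$ on $\Open(N)$ is a sheaf. This is proved by the argument of Theorem~\ref{thm:descent-of-categories-of-algebras} verbatim, with Theorem~\ref{thm:general-notion-of-algebra} replaced by its twisted version Theorem~\ref{thm:twisted-general-algebra} (whose cofinality hypothesis is met for the basis used there by Remark~\ref{rem:condition-for-cofinality}) and with Example~\ref{ex:free-disjoint-union} producing the auxiliary pair of bases; that the descent machinery survives passage to a symmetric monoidal higher category as target was already noted in Section~\ref{sec:higher-target-category}. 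Applying this with $\cat{A}=\Alg_{E/B}$ and the cover $\cover{V}$ gives
\[
\Alg_B(\Alg_{E/B})\;\equivwith\;\lim_{(I,s)\in\cat{C}}\Alg_{\chi(I,s)}(\Alg_{E/B}\resto{\chi(I,s)}).
\]

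Now over each $\chi(I,s)$ the bundle $p$ is trivial: a choice of trivialization $p^{-1}(\chi(I,s))\diffeowith F\times\chi(I,s)$ identifies $\Alg_{E/B}\resto{\chi(I,s)}$ with the constant factorization algebra of categories at $\Alg_F$, so that $\Alg_{\chi(I,s)}(\Alg_{E/B}\resto{\chi(I,s)})\equivwith\Alg_{\chi(I,s)}(\Alg_F)$, and Theorem~\ref{thm:prod-formula} (applied componentwise if $\chi(I,s)$ is disconnected) supplies an equivalence $\Alg_{p^{-1}(\chi(I,s))}=\Alg_{F\times\chi(I,s)}\equivto\Alg_{\chi(I,s)}(\Alg_F)$. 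One checks that this equivalence is precisely the restriction functor $A\mapsto A_{E/B}$ — which amounts to re-running the well-definedness computations carried out just before the statement — and, because restriction of trivializations is compatible with the structure maps of $\cat{C}$ (the cocycle compatibility already used to define $\Alg_{E/B}$ and $A_{E/B}$), that these equivalences are natural in $(I,s)$. Passing to the limit over $\cat{C}$ then yields the asserted equivalence $\Alg_E\equivto\Alg_B(\Alg_{E/B})$, realised by the restriction functor.

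The hard part will be the two coherence points just flagged: establishing the twisted descent statement (this is where Theorem~\ref{thm:twisted-general-algebra}, rather than Theorem~\ref{thm:general-notion-of-algebra}, is genuinely needed, together with the higher-target version of the descent results), and verifying that the disk-by-disk instances of Theorem~\ref{thm:prod-formula} are functorial over $\cat{C}$ and assemble to the globally defined restriction functor — the only real subtlety there being the dependence on the local trivializations, which is controlled exactly as in the constructions of $\Alg_{E/B}$ and $A_{E/B}$ above. Once these are settled the remainder is a formal gluing.
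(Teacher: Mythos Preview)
Your strategy---reduce to the trivial-bundle case by descent and invoke Theorem~\ref{thm:prod-formula} locally---is exactly the paper's, and your argument is correct. But the paper's execution is considerably lighter: instead of a \v{C}ech cover by trivializing disks, it indexes directly over $\Dis(B)$ itself. For the source this is Theorem~\ref{thm:basic-descent-of-category-of-algebras} applied to the factorizing l-nice basis $\Dis(B)$ (every disk in $B$ trivializes $p$, so no auxiliary cover is needed). For the target the identification
\[
\Alg_B(\Alg_{E/B})\;\equivwith\;\lim_{D\in\Dis(B)}\Alg_D(\Alg_{p^{-1}D/D})
\]
holds \emph{essentially by definition}: an algebra in a locally constant algebra of categories on $B$ is precisely a compatible family of objects of $\Alg_{E/B}(D)$ over $D\in\Dis(B)$, which unwinds to the displayed limit. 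This eliminates your twisted-descent step entirely, and with it most of the coherence work you flagged as ``hard''---naturality in $D\in\Dis(B)$ of the local identification with Theorem~\ref{thm:prod-formula} is already built into the construction of $A_{E/B}$.

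Your route is not wrong, just longer: the twisted sheaf statement you propose is indeed provable along the lines you indicate (and something like it is used later for Theorem~\ref{thm:twisted}), but for the present proposition it is avoidable overhead.
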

\begin{proof}
The functor can be written as
\[
\lim_{D\in\Dis(B)}\Alg_{p^{-1}D}\longto\lim_{D\in\Dis(B)}\Alg_D(\Alg_{p^{-1}D/D}).
\]
Indeed, we can apply
Theorem \ref{thm:basic-descent-of-category-of-algebras} to the source,
and the target is this limit essentially by definition.

The given functor is the limit of the restriction functors on
$D\in\Dis(B)$.

However, on each $D$, the restriction functor can be identified with
that in Theorem \ref{thm:prod-formula} by using the decomposition
$p^{-1}D=F\times D$, where $F$ is the fibre of $p$ over $D$.
Therefore it is an equivalence by the assertion of the theorem.

It follows that the twisted version of the restriction functor is also
an equivalence.
\end{proof}

\begin{remark}\label{rem:higher-target-category}
From the discussions of Section \ref{sec:higher-target-category},
Proposition holds for a higher target category by the same proof.
If the target is an $n$-category, then the proposition states that we
have an equivalence of $n$-categories of algebras.
In the following, we shall use the $2$-category case.
\end{remark}

\subsubsection{}
There is a natural further generalization of this.
Namely, the algebra $\Alg_{E/B}$ can
be constructed when the algebra on $E$ is twisted.
That is, let $\cat{A}$ be a locally constant (pre-)algebra on $E$ of
categories.
Then, for a disk $D\sub B$, define the category
\[
\Alg_{E/B}(\cat{A})(D):=\Alg_F(\cat{A}_{E/B}(D)),
\]
where $\cat{A}_{E/B}\in\Alg_B(\Alg_{E/B}(\Cat))$ (where $\Cat$ denotes
the $2$-category of categories in which $\cat{A}$ is taking values) is
the restriction of $\cat{A}$ as in the previous proposition, and $F$
is the fibre of $p$ over $D$, so
$\cat{A}_{E/B}(D)\in\Alg_{E/B}(\Cat)(D)=\Alg_F(\Cat)$.

Moreover, a restriction functor
\begin{equation}\label{eq:twisted-restriction}
\Alg_E(\cat{A})\longto\Alg_B(\Alg_{E/B}(\cat{A}))
\end{equation}
can be defined by $A\mapsto A_{E/B}$, where
$A_{E/B}\in\Alg_B(\Alg_{E/B}(\cat{A}))$ associates to a disk $D\sub
B$, the object
$q_*i^*A\in\Alg_F(q_*i^*\cat{A})=\Alg_{E/B}(\cat{A})(D)$.
The algebra structure is exactly as before.

\begin{theorem}\label{thm:twisted}
For a locally constant (pre-)algebra $\cat{A}$ on $E$ of categories,
the restriction functor \eqref{eq:twisted-restriction} is an
equivalence.
\end{theorem}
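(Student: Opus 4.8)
The plan is to reduce, by descent over the base $B$, to the case of a trivial bundle, where the assertion becomes a twisted form of the product formula of Theorem \ref{thm:prod-formula} that is deduced from Theorem \ref{thm:twisted-general-algebra} just as Theorem \ref{thm:prod-formula} is deduced from Theorem \ref{thm:general-notion-of-algebra}. So I would first prove the \emph{twisted product formula}: for manifolds $B$, $F$ and a locally constant (pre-)algebra $\cat{A}$ of categories on $F\times B$, the restriction functor $\Alg_{F\times B}(\cat{A})\longto\Alg_B(\Alg_{(F\times B)/B}(\cat{A}))$ is an equivalence. For this, take for $M:=F\times B$ the symmetric partial monoidal basis $\psi\colon\cat{E}\into\Dis(M)$ with $\cat{E}_1=\cat{E}\intersect\Disk(M)=\Disk(F)\times\Disk(B)$ constructed in the proof of Theorem \ref{thm:prod-formula}; Hypothesis \ref{hy:monoidal-basis} was verified there, and the cofinality of \eqref{eq:universal-inversion-if-cofinal} for each $i\in\cat{E}_1$ holds by Remark \ref{rem:condition-for-cofinality}, since $\psi$ is a full subposet inclusion and hence $\cat{E}_{/i}\to\cat{E}_{/V_i}$ is an equivalence. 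Theorem \ref{thm:twisted-general-algebra} then yields $\Alg_M(\cat{A})\equivto\Alg^\loc_{\cat{E}_1}(\cat{A})$, and the latter — locally constant $\cat{E}_1$-algebras in the restriction of $\cat{A}$ along $\cat{E}_1\to\Disk(M)$ — unwinds, up to introduction and elimination of units and unit operations, to $\Alg_B(\Alg_{(F\times B)/B}(\cat{A}))$, the twisting on the right being the reassembly along $\cat{E}_1$ of the restricted $\cat{A}$.

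With the trivial-bundle case in hand, I would carry out the argument of the proof of Proposition \ref{prop:twisted-product-formula}, now in its $2$-categorical and twisted form (legitimate by Remark \ref{rem:higher-target-category} and the discussion following Theorem \ref{thm:infinity-multilocalization}, by which all of the descent results of Section \ref{sec:f-alg} persist for a symmetric monoidal $2$-category target and for twisted coefficients). Using descent for $\Alg_E(-)$ along the cover $\{p^{-1}D\}_{D\in\Dis(B)}$ of $E$ — effectively factorizing l-nice because for finite $x\sub E$ the category $\{D\in\Dis(B)\mid x\sub p^{-1}D\}$ equals $\Dis(B)_{p(x)}$, which is contractible — one writes
\[
\Alg_E(\cat{A})\;=\;\lim_{D\in\Dis(B)}\Alg_{p^{-1}D}\bigl(\cat{A}\resto{p^{-1}D}\bigr).
\]
On the target side, $\Alg_{E/B}(\cat{A})$ is by construction a locally constant algebra of categories on $B$, so the same descent gives $\Alg_B(\Alg_{E/B}(\cat{A}))=\lim_{D\in\Dis(B)}\Alg_D\bigl(\Alg_{E/B}(\cat{A})\resto{D}\bigr)$, and for each $D$ this factor is $\Alg_D\bigl(\Alg_{p^{-1}D/D}(\cat{A}\resto{p^{-1}D})\bigr)$ by the definition of $\Alg_{E/B}(\cat{A})$; the functor \eqref{eq:twisted-restriction} is thus the limit over $D\in\Dis(B)$ of the local restriction functors.

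It then remains to see that each local restriction functor is an equivalence. Writing $D=\Disj_j D_j$ with the $D_j$ disks and choosing a trivialization of $p$ over each $D_j$, so $p^{-1}D_j\diffeowith F_j\times D_j$, that functor decomposes as the product over $j$ of the restriction functors $\Alg_{F_j\times D_j}(\cat{A}\resto{p^{-1}D_j})\longto\Alg_{D_j}\bigl(\Alg_{(F_j\times D_j)/D_j}(\cat{A}\resto{p^{-1}D_j})\bigr)$, each an equivalence by the twisted product formula; taking the limit over $\Dis(B)$ gives the theorem. I expect the main obstacle to be coherence bookkeeping rather than anything conceptual: one must arrange the trivialization choices — each pinned down only up to a contractible space, exactly as in the constructions of $\Alg_{E/B}(\cat{A})$ and of \eqref{eq:twisted-restriction} preceding the statement — so that both the decomposition of the local restriction functor and the identification of the target as a limit over $\Dis(B)$ are genuinely natural, and one must check that restricting $\cat{A}$ along the product basis $\cat{E}_1\subset\Disk(F_j\times D_j)$ recovers precisely the twisting $\Alg_{(F_j\times D_j)/D_j}(\cat{A})$ used on the target. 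Granting this, the result is a formal consequence of Theorem \ref{thm:twisted-general-algebra} and the $2$-categorical descent.
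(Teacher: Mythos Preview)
Your proposal is correct and follows essentially the same approach as the paper: first establish the trivial-bundle case by rerunning the proof of Theorem~\ref{thm:prod-formula} with Theorem~\ref{thm:twisted-general-algebra} in place of Theorem~\ref{thm:general-notion-of-algebra} (this is the paper's Lemma~\ref{lem:non-twisted-product-for-twisted-algebra}), then reduce the general case to this one by applying the $2$-categorical descent of Theorem~\ref{thm:basic-descent-of-category-of-algebras} over $\Dis(B)$ to both source and target, exactly as in Proposition~\ref{prop:twisted-product-formula}. Your explicit verification of the cofinality hypothesis via Remark~\ref{rem:condition-for-cofinality} and your remarks on the coherence bookkeeping are apt and match what the paper leaves implicit.
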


Let us first establish this in the case where the fibre bundle is
trivial.
A global choice of a trivialization leads to simplification of the
constructions as well.

\begin{lemma}\label{lem:non-twisted-product-for-twisted-algebra}
Let $B$, $F$ be manifolds, and let $\cat{A}$ be an object of
$\Alg_B(\Alg_F(\Cat))$, or equivalently, a locally constant
algebra of categories on $F\times B$, by
Theorem~\ref{thm:prod-formula}.

Then, the restriction functor
\[
\Alg_{F\times B}(\cat{A})\longto\Alg_B(\Alg_F(\cat{A})),
\]
where $\Alg_F(\cat{A})$ is a locally constant algebra of categories on
$B$, defined by $\Alg_F(\cat{A})(D):=\Alg_F(\cat{A}(D))$,
is an equivalence.
\end{lemma}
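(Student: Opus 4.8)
The plan is to run the proof of Theorem~\ref{thm:prod-formula} essentially verbatim, with the single change that the appeal to Theorem~\ref{thm:general-notion-of-algebra} is replaced by an appeal to its twisted generalization Theorem~\ref{thm:twisted-general-algebra} (for the target $2$-category $\Cat$; the descent machinery it rests on carries over to this target by Section~\ref{sec:higher-target-category} and Remark~\ref{rem:higher-target-category}). Take $M:=F\times B$ and let $\cat{E}_1\into\cat{E}$ be exactly the pair of bases built in the proof of Theorem~\ref{thm:prod-formula}: $\cat{E}\sub\Dis(M)$ the full subposet of those $D$ each component of which is a component of some $D'\times D''$, $D'\in\Dis(B)$, $D''\in\Dis(F)$, with partial monoidal structure pulled back from $\Dis(M)$, and $\cat{E}_1:=\cat{E}\intersect\Disk(M)$, which as a full subposet of $\Disk(M)$ is $\Disk(F)\times\Disk(B)$.

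All of Hypothesis~\ref{hy:monoidal-basis}, and the fact that $\psi$ (resp.\ $\psi_1$) defines an effectively factorizing l-nice (resp.\ l-nice) basis of $M$, were already checked for this pair in the proof of Theorem~\ref{thm:prod-formula}. The only extra hypothesis of Theorem~\ref{thm:twisted-general-algebra} is cofinality of the composite \eqref{eq:universal-inversion-if-cofinal} for each $i\in\cat{E}_1$, and by Remark~\ref{rem:condition-for-cofinality} it is enough that its first arrow $\cat{E}_{/i}\to\cat{E}_{/V_i}$ be cofinal. Here it is an isomorphism of posets: since $\psi$ is the inclusion of the full subposet $\cat{E}\sub\Dis(M)$ and $V_i=\psi(i)$, both $\cat{E}_{/i}$ and $\cat{E}_{/V_i}=\cat{E}\times_{\Open(M)}\Open(V_i)$ are the full subposet of $\cat{E}$ on the objects contained in $V_i$. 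Hence Theorem~\ref{thm:twisted-general-algebra} applies to the locally constant factorization algebra $\cat{A}$ of categories on $M$ and yields that the restriction functor $\Alg_M(\cat{A})\longto\Alg^\loc_{\cat{E}_1}(\cat{A})$ is an equivalence.

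It remains to identify $\Alg^\loc_{\cat{E}_1}(\cat{A})$ with $\Alg_B(\Alg_F(\cat{A}))$. Viewing $\cat{E}_1$ as the multicategory $\Disk(F)\times\Disk(B)$ (multimaps $=$ disjoint inclusions in $M$, as in Remark~\ref{rem:factorization-basis-by-multifunctor}) and noting that the restriction of $\cat{A}$ along $\cat{E}_1\to\Disk(M)$ is precisely the twisting data $D\mapsto\Alg_F(\cat{A})(D)=\Alg_F(\cat{A}(D))$ on $\Disk(B)$, an $\cat{E}_1$-algebra in $\cat{A}$ inverting all unary maps unwinds --- up to the harmless introduction and elimination of unit objects and unit operations, exactly as in the last step of the proof of Theorem~\ref{thm:prod-formula} --- to an algebra on $\Disk(B)$ valued in $\Alg_F(\cat{A})$ whose underlying functor inverts the maps of $\Disk(B)$, i.e.\ to an object of $\Alg_B(\Alg_F(\cat{A}))$. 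One then checks that under this identification the equivalence furnished by Theorem~\ref{thm:twisted-general-algebra} is the restriction functor named in the statement, which is immediate once both restriction functors are written out.

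The genuine (if mild) point requiring care is this last identification: commuting the formation of ``algebras over $\Disk(F)$'' past that of ``algebras over $\Disk(B)$'' while transporting the twist coherently --- as an equivalence of ($2$-)categories, not merely of underlying spaces --- and matching the disk-by-disk twisting algebra $\Alg_F(\cat{A})$ with the restriction of $\cat{A}$ to $\cat{E}_1$. Everything else is bookkeeping, since Theorem~\ref{thm:prod-formula} and its proof supply the rest.
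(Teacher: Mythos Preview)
Your proposal is correct and follows essentially the same approach as the paper: run the proof of Theorem~\ref{thm:prod-formula} verbatim, replacing the appeal to Theorem~\ref{thm:general-notion-of-algebra} by Theorem~\ref{thm:twisted-general-algebra}, with the extra cofinality hypothesis handled via Remark~\ref{rem:condition-for-cofinality} (exactly as Remark~\ref{rem:assumption-for-twisted-algebra-theorem} directs). Your write-up is in fact a faithful expansion of the paper's two-line proof.
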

\begin{proof}
Similar to the proof of Theorem \ref{thm:prod-formula}.
One simply notes that Theorem \ref{thm:twisted-general-algebra}
applies here instead of Theorem \ref{thm:general-notion-of-algebra}.
See Remark \ref{rem:assumption-for-twisted-algebra-theorem}.
\end{proof}

\begin{proof}[Proof of Theorem \ref{thm:twisted}]
The $2$-categorical generalization of Theorem
\ref{thm:basic-descent-of-category-of-algebras} implies that the
restriction functor
$\Alg_E(\Cat)\to\lim_{D\in\Dis(B)}\Alg_{p^{-1}D}(\Cat)$ is an
equivalence of $2$-categories.
From this, one obtains that the restriction functor
\[
\Alg_E(\cat{A})\longto\lim_{D\in\Dis(B)}\Alg_{p^{-1}D}(\cat{A})
\]
is an equivalence.

Similarly, one would like to show that the restriction functor
\[
\Alg_B(\Alg_{E/B}(\cat{A}))\longto\lim_{D\in\Dis(B)}\Alg_D(\Alg_{p^{-1}D/D}(\cat{A}))
\]
is an equivalence.
However, since it is easy to verify from the definitions, that the
restriction of $\Alg_{E/B}(\cat{A})$ to $D\sub B$ is
$\Alg_{p^{-1}D/D}(\cat{A})$, the equivalence also follows from Theorem
\ref{thm:basic-descent-of-category-of-algebras}.

By the naturality of the restriction functor, we have reduced the
statement to the case where the base is a disjoint union of disks.
In this case the fibre bundle is trivial on each component, and the
statement follows from Lemma.
\end{proof}

\begin{remark}
The results of this section depended only on our descent results from
Section \ref{sec:f-alg}.
Therefore, by what have been seen in the previous section, all the
results of this section have a version in which the target category is
infinite dimensional, and we get an equivalence of \emph{infinity}
categories of algebras.
\end{remark}

\end{document}